\newtheoremstyle{dotless}{}{}{\itshape}{}{\bfseries}{}{}{}
\theoremstyle{dotless}
\theoremstyle{plain}
\newtheorem{thm}{Theorem}[section]
\newtheorem{lem}[thm]{Lemma}
\newtheorem{prop}[thm]{Proposition}
\newtheorem{cor}[thm]{Corollary}
\theoremstyle{definition}
\newtheorem{defn}[thm]{Definition}
\newtheorem{rem}[thm]{Remark}
\newtheorem{exa}[thm]{Example}
\newcommand{\N} {\mathbb{N}}
\newcommand{\R} {\mathbb{R}}
\newcommand{\C} {\mathbb{C}}
\newcommand{\e}{\mathrm{e}}
\DeclareMathOperator{\id}{id}
\DeclareMathOperator{\re}{Re}
\providecommand{\differential}{\mathrm{d}}
\renewcommand{\d}{\differential}
\newcommand\rlim{
\mathchoice{\vcenter{\hbox{${\scriptstyle{+}}$}}}
{\vcenter{\hbox{$\scriptstyle{+}$}}}
{\vcenter{\hbox{$\scriptscriptstyle{+}$}}}
{\vcenter{\hbox{$\scriptscriptstyle{+}$}}}}
\newcommand{\vertiii}[1]{{\left\vert\kern-0.25ex\left\vert\kern-0.25ex\left\vert #1 
    \right\vert\kern-0.25ex\right\vert\kern-0.25ex\right\vert}}
\begin{document}

\title[Sun dual theory for bi-continuous semigroups]{Sun dual theory for bi-continuous semigroups}
\author[K.~Kruse]{Karsten Kruse\,\orcidlink{0000-0003-1864-4915}}
\thanks{K.~Kruse acknowledges the support by the Deutsche Forschungsgemeinschaft (DFG) within the Research Training
 Group GRK 2583 ``Modeling, Simulation and Optimization of Fluid Dynamic Applications''.}
\address[KK]{University of Twente, Department of Applied Mathematics, P.O. Box 217, 7500 AE Enschede, The Netherlands, and Hamburg University of Technology, Institute of Mathematics, Am Schwarzenberg-Campus~3, 21073 Hamburg, Germany}
\email{k.kruse@utwente.nl}
\author[F.L.~Schwenninger]{Felix L. Schwenninger\,\orcidlink{0000-0002-2030-6504}}
\address[FLS]{University of Twente, Department of Applied Mathematics, P.O. Box 217, 7500 AE Enschede, The Netherlands}
\email{f.l.schwenninger@utwente.nl}

\subjclass[2020]{Primary 47D06, Secondary 46A70}

\keywords{bi-continuous semigroup, sun dual, sun reflexive, Favard space, Mazur space, mixed topology}

\date{\today}
\begin{abstract}
The sun dual space corresponding to a strongly continuous semigroup is a known concept when dealing with dual semigroups, which are in general only weak$^*$-continuous. In this paper we develop a corresponding theory for bi-continuous semigroups under mild assumptions on the involved locally convex topologies. We also discuss sun reflexivity and Favard spaces in this context, extending classical results by van Neerven. 
\end{abstract}
\maketitle

\section{Introduction}

Semigroup theory is a well-established tool in the abstract study of evolution equations. Classically, {\it strongly continuous} semigroups of boun\-ded linear operators on Banach spaces (also called {\it $C_{0}$-semigroups}) are considered, meaning that the semigroup is strongly continuous with respect to the norm topology. This, however, limits the applicability of the theory in spaces such as $\mathrm{C}_{\operatorname{b}}(\R^{n})$ or $L^{\infty}$, ruling out interesting examples arising from (partial) differential equations. This fact is underlined by Lotz's result \cite{lotz1985} asserting that any strongly continuous semigroup on Grothendieck spaces with the Dunford--Pettis property is automatically uniformly continuous. 

On the other hand, it has long been known that strong continuity fails to be preserved for the dual semigroup 
$\left(T'(t)\right)_{t\ge0}\coloneqq\left(T(t)'\right)_{t\ge0}$ in general, and merely translates into weak$^*$-continuity. Nevertheless, the strong continuity of the ``pre-semigroup'' $\left(T(t)\right)_{t\ge0}$ encodes enough structure to allow for a rich theory. Following first results in the early days of semigroup theory; by Phillips \cite{phillips1955}, Hille--Phillips \cite{hille1996}, de Leeuw \cite{deleeuw1960}, see also Butzer--Berens \cite{butzer1967}; 
intensified research on dual semigroups was conducted in the 1980s centred around a ``Dutch school'' in a series of papers such as by Cl\'ement, Diekmann, Gyllenberg, Heijmans and Thieme \cite{clement1987,clement1988,clement1989,clement1989a,diekmann1991}, de Pagter \cite{depagter1989}. The renewed interest in dual semigroups was partially driven by the interest from applications in e.g.~delay equations \cite{diekmannvangils1995}. At a peak of these developments van Neerven \cite{vanneerven1992} finally provided a general comprehensive treatment of the theory, together with many new results clarifying especially the topological aspects, see also \cite{vanneerven1991,vanneerven1990,vanneerven1991a}. Since then, the interest in dual semigroups which fail to be  strongly continuous remained, and we name particularly applications in mathematical neuroscience \cite{Gils2013,Spe2020}. 

The key concept to compensate for the lack of strong continuity of dual semigroups is the notion of the {\it sun dual space} and the related sun dual semigroup. More precisely, given a strongly continuous semigroup $\left(T(t)\right)_{t\ge0}$ on a Banach space $X$, the {\it sun dual space} $X^{\odot}$ consists of the elements $x'$ in the continuous dual $X'$ such that $\lim_{t\to0\rlim }T'(t)x'=x'$. As $X^{\odot}$ is closed and $T'(t)$-invariant, the restrictions of $T'(t)$ to $X^{\odot}$ define a strongly continuous semigroup $\left(T^{\odot}(t)\right)_{t\ge0}$ on $X^{\odot}$, an object which is in many facets superior to the dual semigroup. 

Note that this approach can be viewed as a way to regain symmetry in duality for continuity properties of the semigroup. While this holds trivially for reflexive spaces $X$ ---in which case $X^{\odot}=X'$---, it is not surprising that sun duality comes with an adapted notion of reflexivity, so-called {\it sun reflexivity} (or $\odot$-reflexivity), which depends on the semigroup under consideration. In particular, if $X$ is $\odot$-reflexive with respect to the semigroup $\left(T(t)\right)_{t\ge0}$, then 
$\left(T^{\odot\odot}(t)\right)_{t\ge0}$ can be identified with $\left(T(t)\right)_{t\ge0}$ via the canonical isomorphism 
$j\colon X\to \left(X^{\odot}\right)', x\mapsto  \left(x^{\odot}\mapsto \langle x^{\odot},x\rangle\right)$. That this framework indeed leads to a meaningful theory is also reflected by the existence of an Eberlein--Shmulyan type theorem due to van Neerven \cite{vanneerven1991}, and de Pagter's characterisation of sun reflexivity \cite{depagter1989}, which can be seen as a variant of Kakutani's theorem.

About ten years after this flourishing period of dual semigroups, K\"uhnemund \cite{kuehnemund2001,kuehnemund2003} conceptualized semigroups which only satisfy weaker continuity properties through the notion of {\it bi-continuous semigroups}. 
More precisely, the strong continuity was relaxed to hold with respect to a Hausdorff locally convex topology $\tau$ coarser than the norm topology on $X$. Under the additional conditions that $\tau$ is sequentially complete on norm-bounded sets and the dual space of $(X,\tau)$ is norming, an exponentially bounded semigroup $\left(T(t)\right)_{t\ge0}$ on $X$ is called $\tau$-bi-continuous if the trajectories $T(\cdot)x$ are $\tau$-strong continuous and locally sequentially $\tau$-equicontinuous on norm-bounded sets. Since the weak$^{*}$-topology shares these properties, dual semigroups naturally fall in this framework. Thus the question becomes how the construction of the sun dual can be seen in this light. With this paper we would like to answer this question and hence generalise existing results for strongly continuous semigroups in the presence of previously missing topological subtleties. 

The interest in bi-continuous semigroups goes beyond the above mentioned special case of dual semigroups, as they, for instance, naturally emerge in the study of evolution equations on spaces of bounded continuous functions, most prominently parabolic problems, see e.g.~Farkas--Lorenzi \cite{farkas2009}, Metafune--Pallara--Wacker \cite{metafune2002}. In the last decades the abstract theory of bi-continuous semigroups has been further developed and variants of the classical case have been established, such as for instance perturbation results; Farkas \cite{farkas2004a,farkas2004b}, approximation results; Albanese--Mangino \cite{albanese2004} and mean ergodic theorems; Albanese--Lorenzi--Manco \cite{albanese2011}. In \cite{farkas2011} Farkas defined a proper concept for a dual bi-continuous semigroup by considering a suitable subspace $X^{\circ}$ of $X'$. In particular, the restriction of the dual semigroup on $X^{\circ}$ is again a $\sigma(X^{\circ},X)$-bi-continuous dual semigroup under some additional topological assumptions.

In this work we develop a sun dual theory for bi-continuous semigroups and discuss its peculiarities with respect to properties of the present topologies. This generalises the classical case, i.e.~strongly continuous semigroups with respect to the norm topology; henceforth simply called ``strongly continuous''. 
Apart from the abstract interest in developing a sun dual framework for bi-continuous semigroups, one of our main motivations to provide such generalizations are open problems of the following kind: We aim to extend the following theorem for strongly continuous semigroups to bi-countinuous ones.

\begin{thm}[{\cite[Theorem 2.9, p.~152]{JacoSchwWint22}}]\label{thm:Baillon_refine}
Let $(X,\|\cdot\|)$ be a Banach space
and $(T(t))_{t\geq 0}$ a strongly continuous semigroup on $X$ with generator $(A,D(A))$. 
Then the following assertions are equivalent:
\begin{enumerate}
\item[(i)] $Fav(T)=D(A)$ and $(T(t))_{t\geq 0}$ satisfies the $\mathrm{C}$-maximal regularity property.
\item[(ii)] $A$ extends to a bounded operator from $X$ to $X$.
\end{enumerate}
\end{thm}

Here $Fav(T)$ denotes the \emph{Favard space} of $(T(t))_{t\geq 0}$ given by
\[
Fav(T)\coloneqq\{x\in X\;|\;\limsup_{t\to 0\rlim}\tfrac{1}{t}\|T(t)x-x\|<\infty\}
\]
and \emph{$\mathrm{C}$-maximal regularity} refers to the property that 
\[
t\mapsto \int_{0}^{t}T(t-s)f(s)\,\mathrm{d}s \in \mathrm{C}([0,\infty);D(A))\text{ for all }f\in \mathrm{C}([0,\infty);X).
\]
Note that \cite[Theorem 2.9, p.~152]{JacoSchwWint22} lists another equivalent condition, which relates to control theory, see also \cite[Remark 2.4, p.~148]{JacoSchwWint22}. Following this, the question whether \prettyref{thm:Baillon_refine} can be formulated for bi-continuous semigroups is relevant for studying 
generalizations of control theoretic notions in non-strongly continuous semigroup settings. 
The concept of sun dual spaces for strongly continuous semigroups is pivotal in the proof of the non-trivial implication  (i) $\Rightarrow$ (ii) in \prettyref{thm:Baillon_refine}. The argument is based, among other tools, on two characterizations due to van Neerven, \cite[Theorems 3.2.8, 3.2.9, p.~57]{vanneerven1992}: 
The first stating that an element $x\in X$ belongs to $Fav(T)$ 
if and only if that there exists a bounded sequence $(y_{n})_{n\in\N}$ in $X$ such that 
$\lim_{n\to\infty}R(\lambda,A)y_{n}=x$ for some (all) $\lambda$ in the resolvent set $\rho(A)$ of $A$ 
where $R(\lambda,A)\coloneqq(\lambda \id -A)^{-1}$. 
The second  result claims that the property $Fav(T)=D(A)$ is equivalent to the condition that 
$R(\lambda,A)B_{\|\cdot\|^{\odot}}$ is closed in $X$ for some (all) $\lambda\in\rho(A)$ where 
$B_{(X,\|\cdot\|^{\odot})}\coloneqq\{x\in X\colon \|x\|^{\odot}\leq1\}$ is the unit ball w.r.t.~the norm
\[
\|x\|^{\odot}\coloneqq\sup_{x^{\odot}\in X^{\odot},\|x^{\odot}\|_{X'}\leq 1}|\langle x^{\odot},x\rangle|,\quad x\in X,
\]
which is equivalent to $\|\cdot\|$ by \cite[Theorem 1.3.5, p.~7]{vanneerven1992}. 
As a stepping stone towards a bi-continuous version of \prettyref{thm:Baillon_refine}, in this work we provide 
counterparts of \cite[Theorems 3.2.8, 3.2.9, p.~57]{vanneerven1992} for bi-continuous semigroups 
in \prettyref{thm:favard} and \prettyref{thm:favard_gamma_closed_resolv}. 
However, to even conclude a bi-continuous variant of \prettyref{thm:Baillon_refine} from this, one would have to bypass an argument, which was based on results by Bourgain--Rosenthal in the case of strongly continuous semigroups in \cite[Theorem 2.9, p.~152]{JacoSchwWint22}. The study of this gap goes beyond the scope of this paper and is subject to future work.

Let us briefly highlight some of our findings in the following. 
Starting from Farkas' dual space \cite{farkas2011}
\[
X^{\circ}\coloneqq\{x'\in X'\;|\;x'\;\tau\text{-sequentially continuous on } \|\cdot\|\text{-bounded sets}\},
\]
which is a closed subspace of $X'$ and invariant under the dual semigroup, we define the \emph{bi-sun dual space} $X^{\bullet}$ as the space of strong continuity for the restricted dual semigroup $T^{\circ}(t)\coloneqq T'(t)|_{X^{\circ}}$, $t\ge0$.
Under the additional assumptions that 
\begin{enumerate}
\item $X^{\circ}=\overline{(X,\tau)'}^{\|\cdot\|_{X'}}$,  
\item $X^{\circ}\cap \{x'\in X'\;|\;\|x'\|_{X'}\leq 1\}$ is sequentially $\sigma(X^{\circ},X)$-complete, and that
\item every $\|\cdot\|_{X'} $-bounded $\sigma(X^{\circ},X)$-null sequence in $X^{\circ}$ is $\tau$-equicontinuous 
on $\|\cdot\|$-bounded sets,
\end{enumerate}
 we can subsequently show that the norm defined by
\begin{equation*}
	\|x\|^{\bullet}\coloneqq\sup_{x^{\bullet}\in X^{\bullet}, \|x^{\bullet}\|_{X'}\leq 1}|\langle x^{\bullet},x\rangle|,\quad x\in X,
\end{equation*}
is equivalent to $\|\cdot\|$. This result, \prettyref{thm:eqiv_norms}, naturally generalises the corresponding known fact for strongly continuous semigroups (see \cite[Theorem 1.3.5, p.~7]{vanneerven1992} and the discussion in the previous paragraph). Further, let us point out that the assumptions $(1)$--$(3)$ are fulfilled 
by \prettyref{thm:sufficient_ass_adj_bi_cont} if $(X,\gamma)$ is a sequentially complete $c_{0}$-barrelled Mazur space, e.g.~a sequentially complete Mackey--Mazur space, where $\gamma\coloneqq\gamma(\|\cdot\|,\tau)$ denotes the mixed topology of Wiweger \cite{wiweger1961}. 
We henceforth say that $X$ is \emph{$\bullet$-reflexive} with respect to the $\tau$-bi-continuous semigroup $(T(t))_{t\ge0}$ if the canonical map 
$j\colon X\to {X^{\bullet}}'$ given by 
\[
\langle j(x), x^{\bullet}\rangle\coloneqq\langle x^{\bullet},x\rangle, \quad x\in X, x^{\bullet}\in X^{\bullet},
\]  
maps the space of strong continuity $X_{\operatorname{cont}}$ onto $X^{\bullet\bullet}$. Given the latter property, we show that $j\colon X\to {X^{\bullet}}'$ is surjective if and only if the unit ball $B_{(X,\|\cdot\|^{\bullet})}=\{x\in X\colon \|x\|^{\bullet}\leq1\}$ is $\sigma(X,X^{\bullet})$-compact, see \prettyref{thm:bi_sun_reflexive_j_surjective}, implying that $Fav(T)=D(A)$ 
if one (thus both) of the assertions holds. In analogy to strongly continuous semigroups, we are able to show in \prettyref{thm:favard_gamma_closed_resolv} that the domain of the semigroup generator equals the Favard space if and only if the set $R(\lambda,A)B_{(X,\|\cdot\|^{\bullet})}$ 
is closed with respect to $\tau$. 
The main results are thoroughly laid out by various natural classes of examples.

The article is organized as follows. In the preparatory \prettyref{sect:notions} we set the stage by discussing the topological assumptions and recapping some basics on bi-continuous semigroups as well as integral notions in this context. With the level of detail we aim for making the presentation rather self-contained, especially for readers less familiar with bi-continuous semigroups. 
In Sections \ref{sect:dual_semigroups} and \ref{sect:sun_dual} we present our approach to dual semigroups of bi-continuous semigroups and the sun dual space, respectively. The short \prettyref{sect:reflexivity} discusses the notion of sun reflexivity in this generalised context and we finish with studying the relation of the obtained results to Favard spaces, \prettyref{sect:favard}. 

\section{Notions and preliminaries}
\label{sect:notions}

For a vector space $X$ over the field $\R$ or $\C$ with a Hausdorff locally convex topology $\tau$ 
we denote by $(X,\tau)'$ the topological linear dual space and just write $X'\coloneqq(X,\tau)'$ 
if $(X,\tau)$ is a Banach space. For two topologies $\tau_{1}$ and $\tau_{2}$ on a space $X$, 
we write $\tau_{1}\leq\tau_{2}$ if the topology $\tau_{1}$ is coarser than $\tau_{2}$. 
Further, we use the symbol $\mathcal{L}(X;Y)\coloneqq\mathcal{L}((X,\|\cdot\|_{X});(Y,\|\cdot\|_{Y}))$ 
for the space of continuous linear operators from a Banach space $(X,\|\cdot\|_{X})$ 
to a Banach space $(Y,\|\cdot\|_{Y})$ and denote by $\|\cdot\|_{\mathcal{L}(X;Y)}$ the operator norm on 
$\mathcal{L}(X;Y)$. If $X=Y$, we set $\mathcal{L}(X)\coloneqq\mathcal{L}(X;X)$.

In the following, the mixed topology, \cite[Section 2.1]{wiweger1961}, and the notion of a Saks space 
\cite[I.3.2 Definition, p.~27--28]{cooper1978} will be crucial.

\begin{defn}[{\cite[Definition 2.2, p.~3]{kruse_schwenninger2022}}]\label{defn:mixed_top_Saks}
Let $(X,\|\cdot\|)$ be a Banach space and $\tau$ a Hausdorff locally convex topology on $X$ that is coarser 
than the $\|\cdot\|$-topology $\tau_{\|\cdot\|}$.  Then
\begin{enumerate}
	\item[(a)] the \emph{mixed topology} $\gamma \coloneqq \gamma(\|\cdot\|,\tau)$ is
	the finest linear topology on $X$ that coincides with $\tau$ on $\|\cdot\|$-bounded sets and such that 
	$\tau\leq \gamma \leq \tau_{\|\cdot\|}$, 
    \item[(b)] the triple $(X,\|\cdot\|,\tau)$ is called a \emph{Saks space} if there exists a directed system 
    of seminorms $\mathcal{P}_{\tau}$ that generates the topology $\tau$ such that
\begin{equation}\label{eq:saks}
\|x\|=\sup_{p\in\mathcal{P}_{\tau}} p(x), \quad x\in X.
\end{equation}
\end{enumerate}
\end{defn}

The mixed topology $\gamma$ is Hausdorff locally convex and 
our definition is equivalent to the one from the literature \cite[Section 2.1]{wiweger1961} 
due to \cite[Lemmas 2.2.1, 2.2.2, p.~51]{wiweger1961}.

\begin{defn}[{\cite[Definition 2.2, p.~423]{kruse_seifert2022a}}]
We call a Saks space $(X,\|\cdot\|,\tau)$ \emph{sequentially complete} if $(X,\gamma)$ is sequentially complete.
\end{defn}

We recall the definition of the Pettis integral of a function with values in a Hausdorff locally convex space, 
which we need later on and extends the original definition for Banach-valued functions 
\cite[Definition 2.1, p.~280]{pettis1938}. 

\begin{defn}
Let $(X,\tau)$ be a Hausdorff locally convex space over the field $\mathbb{K}\coloneqq\R$ or $\C$, 
$\Omega\subset\R$ a measurable set w.r.t.~the Lebesgue measure $\lambda$ and $L^{1}(\Omega)$ the space of 
(equivalence classes of) absolutely Lebesgue integrable functions from $\Omega$ to $\mathbb{K}$. 
A function $f\colon \Omega \to X$ is called \emph{weakly measurable} if the scalar-valued function 
$\langle x', f\rangle\coloneqq x'\circ  f$ is Lebesgue measurable for all $x' \in (X,\tau)'$.  
A weakly measurable function is said to be \emph{weakly integrable} if $x' \circ f \in L^{1}(\Omega)$ 
for all $x' \in (X,\tau)'$. 
A function $f\colon \Omega\to X$ is called $\tau$-\emph{Pettis integrable on} $\Omega$ \emph{in} $X$
if it is weakly integrable and
\[
 \exists\; x_{\Omega}(f) \in X\; \forall x' \in (X,\tau)': 
 \langle x' , x_{\Omega}(f) \rangle = \int\limits_{\Omega} \langle x' , f(s) \rangle\d\lambda(s). 
\]  
In this case $x_{\Omega}(f)$ is unique due to $X$ being Hausdorff and we define the $\tau$-\emph{Pettis integral 
of} $f$ \emph{on} $\Omega$ \emph{in} $X$ by 
\[
 \int\limits_{\Omega} f(s)\d\lambda(s)\coloneqq x_{\Omega}(f).
\]	
If $\Omega$ is an interval $[a,b]$, $a\leq b$, we usually write
\[
\int\limits_{a}^{b} f(s)\d s\coloneqq\int\limits_{[a,b]} f(s)\d\lambda(s).
\]
\end{defn}

\begin{defn}
Let $(X,\|\cdot\|,\tau)$ be a sequentially complete Saks space and $\Omega\subset\R$ non-empty. 
We set 
\[
  \mathrm{C}_{\tau,\operatorname{b}}(\Omega;X)
\coloneqq\{f\in\mathrm{C}(\Omega;(X,\tau))\;|\;\|f\|_{\infty}\coloneqq\sup_{x\in\Omega}\|f(x)\|<\infty\}
\]
where $\mathrm{C}(\Omega;(X,\tau))$ is the space of continuous functions from $\Omega$ to $(X,\tau)$.
\end{defn}

\begin{prop}\label{prop:Riemann_Pettis}
Let $(X,\|\cdot\|,\tau)$ be a sequentially complete Saks space and $a,b\in\R$ with $a<b$. 
\begin{enumerate}
\item[(a)] If $f\in\mathrm{C}_{\tau,\operatorname{b}}([a,b];X)$, 
then $f$ is $\tau$-Riemann integrable, $\gamma$-Riemann integrable, $\tau$-Pettis integrable and 
$\gamma$-Pettis integrable on $[a,b]$ in $X$ and all four integrals coincide. 
\item[(b)] If $f\in\mathrm{C}_{\tau,\operatorname{b}}([a,\infty);X)$ is improper 
$\tau$-Riemann integrable on $[a,\infty)$ such that even $|\langle x',f\rangle|$ is improper Riemann integrable 
on $[a,\infty)$ for all $x'\in (X,\tau)'$, then $f$ is improper $\gamma$-Riemann integrable, 
$\tau$-Pettis integrable and $\gamma$-Pettis integrable on $[a,\infty)$ in $X$ and all four integrals coincide. 
\end{enumerate}
\end{prop}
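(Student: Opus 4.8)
The plan is to establish the existence of each integral and then the equality of all four by testing against the norming dual $(X,\tau)'$. I would begin with part (a). First I would recall that since $\tau \leq \gamma \leq \tau_{\|\cdot\|}$, a $\gamma$-Riemann sum converges whenever the corresponding $\tau$-Riemann sum does \emph{and} the relevant partial sums stay $\|\cdot\|$-bounded; by \prettyref{rem:mixed_top} (e), $\gamma$-convergence of a sequence is precisely $\tau$-convergence together with $\|\cdot\|$-boundedness. So the key is: (1) the $\tau$-Riemann integral of $f$ exists, and (2) the Riemann sums of $f$ over $[a,b]$ form a $\|\cdot\|$-bounded net. For (1), I would invoke that a $\tau$-continuous function on a compact interval that is $\|\cdot\|$-bounded is $\tau$-Riemann integrable --- this is essentially the content used implicitly in \prettyref{thm:generator} (d); concretely, one shows the Riemann sums form a $\tau$-Cauchy net, and because they are $\|\cdot\|$-bounded (each sum is a convex-combination-type average bounded by $(b-a)\|f\|_\infty$ up to the mesh, hence bounded by $\|f\|_\infty(b-a)$), sequential completeness of $\tau$ on $\|\cdot\|$-bounded sets (\prettyref{ass:standard} (ii)) upgrades to convergence of the net along a cofinal sequence of refinements, and a standard argument gives convergence of the full net. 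For (2), the bound $\bigl\|\sum_i f(\xi_i)(t_{i+1}-t_i)\bigr\| \leq \|f\|_\infty (b-a)$ is immediate. Hence the $\tau$- and $\gamma$-Riemann integrals exist and coincide (same net, finer-topology limit agrees with coarser one since $\gamma$ is Hausdorff).

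Next, for the Pettis integrals in (a): weak measurability and weak integrability follow because for each $x'\in(X,\tau)'$ the function $\langle x', f(\cdot)\rangle$ is continuous on $[a,b]$, hence Riemann (and Lebesgue) integrable; and $(X,\gamma)' = (X,\tau)'$ by \prettyref{rem:mixed_top} (the mixed topology and $\tau$ have the same continuous linear functionals on the Saks space), so the class of test functionals is the same for both Pettis notions. The candidate Pettis integral is the $\tau$-Riemann integral $y := \int_a^b f(s)\,\d s$ already constructed: for $x'\in(X,\tau)'$, applying the (continuous, linear) $x'$ to the Riemann sums and passing to the limit gives $\langle x', y\rangle = \int_a^b \langle x', f(s)\rangle\,\d s$, which is exactly the defining property of the $\tau$-Pettis (and, since the element $y\in X$ and the test functionals coincide, the $\gamma$-Pettis) integral. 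Uniqueness from Hausdorffness then forces all four integrals to equal $y$.

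For part (b), I would reduce to part (a) by a limiting argument. Set $y_n := \int_a^{a+n} f(s)\,\d s$, which by (a) is simultaneously the $\tau$-, $\gamma$-, $\tau$-Pettis and $\gamma$-Pettis integral over $[a,a+n]$. The hypothesis that $f$ is improper $\tau$-Riemann integrable means $y_n \to y$ in $\tau$ for some $y\in X$; the additional hypothesis that $|\langle x', f\rangle|$ is improper Riemann integrable for every $x'\in(X,\tau)'$ gives $\langle x', f(\cdot)\rangle \in L^1([a,\infty))$, so that $\langle x', y_n\rangle = \int_a^{a+n}\langle x',f(s)\rangle\,\d s \to \int_a^\infty \langle x', f(s)\rangle\,\d s$; combined with $\tau$-continuity of $x'$ this identifies $\langle x', y\rangle = \int_a^\infty \langle x', f(s)\rangle\,\d s$, so $y$ is the $\tau$-Pettis integral on $[a,\infty)$. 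To get $\gamma$-convergence $y_n\to y$ (hence improper $\gamma$-Riemann integrability), by \prettyref{rem:mixed_top} (e) it suffices to check $(y_n)_n$ is $\|\cdot\|$-bounded: this is where I would use that $(X,\tau)'$ is norming, writing $\|y_n\| = \sup_{\|x'\|_{X'}\leq 1}|\langle x', y_n\rangle| \leq \sup_{\|x'\|_{X'}\leq 1}\int_a^\infty |\langle x', f(s)\rangle|\,\d s$. The main obstacle is precisely controlling this supremum: a priori the improper integrability of $|\langle x',f\rangle|$ is only pointwise in $x'$, so one needs a uniform bound. I expect this follows from a Banach--Steinbach / uniform boundedness argument applied to the family of functionals $x' \mapsto \int_a^\infty \langle x', f(s)\rangle\,\d s$ on the Banach space $(X,\tau)'$ (each is bounded by hypothesis, and they are pointwise limits of the norm-bounded functionals $x'\mapsto\langle x', y_n\rangle$), or more directly from the fact that $y_n \to y$ in $\tau$ already forces $\sup_n |\langle x', y_n\rangle| < \infty$ for each $x'$, whence uniform boundedness on the Banach space $(X,\tau)'$ yields $\sup_n \|y_n\| = \sup_n \sup_{\|x'\|\leq 1}|\langle x', y_n\rangle| < \infty$. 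Once $\|\cdot\|$-boundedness of $(y_n)_n$ is in hand, $\gamma$-convergence and the coincidence of all four integrals follow exactly as in part (a), using Hausdorffness of $\gamma$ and the identity $(X,\gamma)'=(X,\tau)'$.
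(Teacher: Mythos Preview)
Your overall strategy for part (a) is sound and close to the paper's: establish $\tau$-Riemann integrability, upgrade to $\gamma$-Riemann via $\|\cdot\|$-boundedness of the Riemann sums and \prettyref{rem:mixed_top}~(e), and then identify the common value as the Pettis integral by testing against dual elements. The gap is in the $\gamma$-Pettis step, where you assert $(X,\gamma)'=(X,\tau)'$. This is false in general; what holds is $(X,\gamma)'=\overline{(X,\tau)'}^{\|\cdot\|_{X'}}$ (see \prettyref{rem:bi_cont_dual}), and the inclusion can be strict. Hence testing only against $x'\in(X,\tau)'$ does not verify $\gamma$-Pettis integrability. The paper closes this differently: it first proves that $f\colon[a,b]\to(X,\gamma)$ is \emph{continuous} (if $s_n\to s$ then $f(s_n)\to f(s)$ in $\tau$ and the sequence is $\|\cdot\|$-bounded, hence $\gamma$-convergent by \prettyref{rem:mixed_top}~(e)). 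Once $f$ is $\gamma$-continuous, $\langle x',f\rangle$ is continuous on $[a,b]$ for every $x'\in(X,\gamma)'$, so it is Borel measurable and integrable, and the Riemann-sum limit argument applied with the $\gamma$-continuous functional $x'$ yields $\langle x',y\rangle=\int_a^b\langle x',f(s)\rangle\,\d s$ for all such $x'$.

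The same misidentification reappears in your part (b): you invoke uniform boundedness on ``the Banach space $(X,\tau)'$'', but $(X,\tau)'$ equipped with $\|\cdot\|_{X'}$ need not be complete---it is complete precisely when it coincides with $(X,\gamma)'$. Passing to the genuine Banach space $(X,\gamma)'$ would require $\sup_n|\langle x',y_n\rangle|<\infty$ for every $x'\in(X,\gamma)'$, and this does not follow from $\tau$-convergence of $(y_n)$ alone, since such $x'$ are not $\tau$-continuous in general. The paper's treatment of (b) is terse (``analogous to (a)''), only making explicit that the hypothesis on $|\langle x',f\rangle|$ secures Lebesgue integrability of $\langle x',f\rangle$ and the equality of the improper Riemann and Lebesgue integrals for $x'\in(X,\tau)'$; the $\gamma$-continuity of $f$ on $[a,\infty)$ (proved exactly as in (a)) is again the mechanism for handling functionals in $(X,\gamma)'$.
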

\begin{proof}
(a) It is a direct consequence of the proof of \cite[Proposition 1.1, p.~232]{komatsu1964}, 
the sequential completeness of the Saks space $(X,\|\cdot\|,\tau)$ and \cite[Corollary 2.3.2, p.~55]{wiweger1961}
that $f$ is $\tau$-Riemann integrable. We note that $\langle x',f\rangle$ is continuous on $[a,b]$ and thus 
Borel-measurable for all $x'\in(X,\tau)'$ since $f$ is $\tau$-continuous. Further, the definition of the $\tau$-Riemann 
integral $\operatorname{R}$-$\int_{a}^{b}f(s)\d s\in X$ by Riemann sums implies that 
\[
\langle x',\operatorname{R}\text{-}\int_{a}^{b}f(s)\d s\rangle
=\int_{a}^{b}\langle x',f(s)\rangle\d s
=\int_{[a,b]}\langle x',f(s)\rangle\d\lambda(s)
\]
for all $x'\in(X,\tau)'$. Thus $f$ is $\tau$-Pettis integrable on $[a,b]$ in $X$ 
and the Riemann and the Pettis integral coincide. 

Furthermore, the $\tau$-Riemann integrability of $f$ implies that the Riemann sums are $\tau$-convergent. 
They are even $\|\cdot\|$-bounded as $f$ is $\|\cdot\|$-bounded. 
It follows from \cite[I.1.10 Proposition, p.~9]{cooper1978} that the Riemann sums are $\gamma$-convergent and their 
$\gamma$-limit coincides with their $\tau$-limit because $\gamma$ is stronger than $\tau$.
Thus $f$ is $\gamma$-Riemann integrable on $[a,b]$ in $X$ and this integral coincides with the $\tau$-Riemann 
integral. 

Now, we only need to prove that $f\colon [a,b]\to (X,\gamma)$ is continuous. 
Then it follows as above that $f$ is $\gamma$-Pettis integrable on $[a,b]$ in $X$ and that 
the Riemann and the Pettis integral coincide. 
Let $(x_{n})_{n\in\N}$ be a sequence in $[a,b]$ that converges to $x_{0}\in[a,b]$. 
Then the sequence $(f(x_{n}))_{n\in\N}$ converges to $f(x_{0})$ in $(X,\tau)$ and is $\|\cdot\|$-bounded since 
$f\in\mathrm{C}_{\tau,\operatorname{b}}([a,b];X)$.
By \cite[I.1.10 Proposition, p.~9]{cooper1978} it follows that $(f(x_{n}))_{n\in\N}$ converges to $f(x_{0})$ in $(X,\gamma)$, 
implying that $f\colon[a,b]\to (X,\gamma)$ is continuous. 

(b) The proof is analogous to (a). The condition that $|\langle x',f\rangle|$ is improper Riemann integrable 
on $[a,\infty)$ for all $x'\in (X,\tau)'$ guarantees that $\langle x',f\rangle$ is Lebesgue integrable on 
$[a,\infty)$ and 
\[
\int_{a}^{\infty}\langle x',f(s)\rangle\d s
=\int_{[a,\infty)}\langle x',f(s)\rangle\d\lambda(s)
\]
for all $x'\in (X,\tau)'$ by \cite[Satz 6.3, p.~153]{elstrodt2005}.
\end{proof}

Using that a triple $(X,\|\cdot\|,\tau)$ fulfils \cite[Assumptions 1, p.~206]{kuehnemund2003} 
if and only if it is a sequentially complete Saks space (see \cite[p.~423]{kruse_seifert2022a}), 
\prettyref{defn:mixed_top_Saks} (a), \cite[Proposition 3.6 (ii), p.~1137]{federico2020} in 
combination with \cite[2.4.1 Corollary, p.~56]{wiweger1961} 
and that a sequence in $X$ is $\gamma$-convergent if and only if it is 
$\tau$-convergent and $\|\cdot\|$-bounded by \cite[I.1.10 Proposition, p.~9]{cooper1978}, we 
may rephrase the definition \cite[Definition 3, p.~207]{kuehnemund2003} of a bi-continuous semigroup 
on $X$ in the following way. 

\begin{defn}\label{defn:bi_continuous}
Let $(X,\|\cdot\|,\tau)$ be a sequentially complete Saks space and $\gamma \coloneqq \gamma(\|\cdot\|,\tau)$. 
A family $(T(t))_{t\geq 0}$ in $\mathcal{L}(X)$ is called a \emph{bi-continuous semigroup} on $X$
if 
\begin{enumerate}
\item[(i)] $(T(t))_{t\geq 0}$ is a \emph{semigroup}, i.e.~$T(t+s)=T(t)T(s)$ and $T(0)=\id$ for all $t,s\geq 0$,
\item[(ii)] $(T(t))_{t\geq 0}$ is $\gamma$\emph{-strongly continuous}, 
i.e.~the map $T_{x}\colon[0,\infty)\to(X,\gamma)$, $T_{x}(t)\coloneqq T(t)x$, is continuous for all $x\in X$, 
\item[(iii)] $(T(t))_{t\geq 0}$ is \emph{locally sequentially $\gamma$-equicontinuous}, 
i.e.~for every sequence $(x_n)_{n\in\N}$ in $X$, $x\in X$ 
with $\gamma\text{-}\lim\limits_{n\to\infty} x_n = x$ it holds that
     \[
      \gamma\text{-}\lim_{n\to\infty} T(t)(x_n-x) = 0
     \]
locally uniformly for all $t\in [0,\infty)$.
\end{enumerate}
\end{defn}

If we want to emphasize the dependence on the Saks space, we say that $(T(t))_{t\geq 0}$ is a bi-continuous semigroup 
on $(X,\|\cdot\|,\tau)$. 
\cite[Proposition 3.6 (ii), p.~1137]{federico2020} in combination with \cite[2.4.1 Corollary, p.~56]{wiweger1961} 
gives that a bi-continuous semigroup $(T(t))_{t\geq 0}$ on $X$ is \emph{exponentially bounded} (of type $\omega$), 
i.e.~there exist $M\geq 1$ and $\omega\in\R$ such that $\|T(t)\|_{\mathcal{L}(X)}\leq M\e^{\omega t}$ for all $t\geq 0$,
and we call
\[
\omega_{0}\coloneqq\omega_{0}(T)\coloneqq\inf\{\omega\in\R\;|\; \exists\; M\geq 1\;\forall\;t\geq 0:\;
\|T(t)\|_{\mathcal{L}(X)}\leq M\e^{\omega t}\}
\]
its \emph{growth bound} (see \cite[p.~7]{kuehnemund2001}). 
Due to the exponential boundedness of a bi-continuous semigroup and 
\cite[I.1.10 Proposition, p.~9]{cooper1978} we also may rephrase the definition \cite[Definition 1.2.6, p.~7]{farkas2003} 
of the generator of a bi-continuous semigroup in terms of the mixed topology. 

\begin{defn}
Let $(X,\|\cdot\|,\tau)$ be a sequentially complete Saks space
and $(T(t))_{t\geq 0}$ a bi-continuous semigroup on $X$. The \emph{generator} $(A,D(A))$ 
is defined by
\begin{align*}
D(A)\coloneqq&\Bigl\{x\in X\;|\;\gamma\text{-}\lim_{t\to 0\rlim}\frac{T(t)x-x}{t}\;\text{exists in } X\Bigr\},\\
Ax\coloneqq&\gamma\text{-}\lim_{t\to 0\rlim}\frac{T(t)x-x}{t},\;x\in D(A).
\end{align*}
\end{defn}
 
We recall that an element $\lambda\in\C$ belongs to the \emph{resolvent set} $\rho(A)$ of the generator $(A,D(A))$ 
if $\lambda-A\colon D(A)\to X$ is bijective and the \emph{resolvent} 
$R(\lambda,A)\coloneqq (\lambda-A)^{-1}\coloneqq(\lambda\id -A)^{-1}\in\mathcal{L}(X)$. 
For a linear subspace $Y$ of $X$ we define the \emph{part} $A_{\mid Y}$ \emph{of} $A$ \emph{in} $Y$ by 
\begin{align*}
D(A_{\mid Y})\coloneqq&\{y\in D(A)\cap Y\;|\;Ay\in Y\},\\
A_{\mid Y}y\coloneqq&Ay,\;y\in D(A_{\mid Y}).
\end{align*}
Usually, it is required that $Y$ is a $\|\cdot\|$-closed subspace (or a Banach space norm-continuously embedded 
in $X$) which is $(T(t))_{t\geq 0}$-invariant (see \cite[Chap.~II, Definition, p.~60]{engel_nagel2000}),
but this is not needed just for the sake of the definition of $A_{\mid Y}$. 
With these definitions at hand, we recall the following properties of the generator of 
a bi-continuous semigroup given 
in \cite[Definition 9, Propositions 10, 11, Theorem 12, Corollary 13, p.~213--215]{kuehnemund2003}, 
which are summarised in \cite[Theorems 5.5, 5.6, p.~339--340]{budde2019}, and may be rephrased in terms 
of the mixed topology by \cite[I.1.10 Proposition, p.~9]{cooper1978} and \prettyref{prop:Riemann_Pettis} as well.

\begin{thm}\label{thm:generator}
Let $(X,\|\cdot\|,\tau)$ be a sequentially complete Saks space 
and $(T(t))_{t\geq 0}$ a bi-continuous semigroup on $X$ with generator $(A,D(A))$. 
Then the following assertions hold:
\begin{enumerate}
\item[(a)] The generator $(A,D(A))$ is \emph{sequentially $\gamma$-closed}, 
i.e.~whenever $(x_{n})_{n\in\N}$ is a sequence in 
$D(A)$ such that $\gamma\text{-}\lim_{n\to\infty}x_{n}=x$ and $\gamma\text{-}\lim_{n\to\infty}Ax_{n}=y$ 
for some $x,y\in X$, then $x\in D(A)$ and $Ax=y$.
\item[(b)] The domain $D(A)$ is \emph{sequentially $\gamma$-dense}, i.e.~for each $x\in X$ there exists a 
sequence $(x_{n})_{n\in\N}$ in $D(A)$ such that $\gamma\text{-}\lim_{n\to\infty}x_{n}=x$.
\item[(c)] For $x\in D(A)$ we have $T(t)x\in D(A)$ and $T(t)Ax=AT(t)x$ for all $t\geq 0$. 
\item[(d)] For $t>0$ and $x\in X$ we have 
\[
\int_{0}^{t}T(s)x\d s\in D(A)
 \quad\text{and }
A\int_{0}^{t}T(s)x\d s=T(t)x-x
\]
where the integrals are $\gamma$-Pettis integrals.
\item[(e)] For $\re\lambda>\omega_{0}$ we have $\lambda\in\rho(A)$ and 
\[
R(\lambda,A)x=\int_{0}^{\infty}\e^{-\lambda s}T(s)x\d s,\quad x\in X,
\]
where the integral is a $\gamma$-Pettis integral.
\item[(f)] For each $\omega>\omega_{0}$ there exists $M\geq 1$ such that 
\[
\|R(\lambda,A)^{k}\|_{\mathcal{L}(X)}\leq \frac{M}{(\re \lambda -\omega)^{k}}
\]
for all $k\in\N$ and $\re\lambda>\omega$, i.e.~the generator $(A,D(A))$ is a \emph{Hille--Yosida operator}.
\item[(g)] Let $X_{\operatorname{cont}}$ be the space of $\|\cdot\|$-strong continuity for $(T(t))_{t\geq 0}$, i.e. 
\[
X_{\operatorname{cont}}\coloneqq\{x\in X\;|\;\lim_{t\to 0\rlim}\|T(t)x-x\|=0\}.
\]
Then $X_{\operatorname{cont}}$ is a $\|\cdot\|$-closed, sequentially $\gamma$-dense, 
$(T(t))_{t\geq 0}$-invariant linear subspace 
of $X$. Moreover, $X_{\operatorname{cont}}=\overline{D(A)}^{\|\cdot\|}$ 
and $(T(t)_{\mid X_{\operatorname{cont}}})_{t\geq 0}$ is the $\|\cdot\|$-strongly continuous semigroup on 
$X_{\operatorname{cont}}$ generated by the part $A_{\mid X_{\operatorname{cont}}}$ of $A$ in 
$X_{\operatorname{cont}}$ and 
\[
D(A_{\mid X_{\operatorname{cont}}})=\{x\in D(A)\;|\;Ax\in X_{\operatorname{cont}}\}.
\]
\end{enumerate}
\end{thm}

We added in part (g) that $X_{\operatorname{cont}}$ is sequentially $\gamma$-dense in $X$, which is a consequence of (b).

\section{Dual bi-continuous semigroups}
\label{sect:dual_semigroups}

We start this section by recalling the definition of the dual semigroup on $X^{\circ}$ of a bi-continuous semigroup 
on $X$ given in \cite{farkas2011}, where for a Saks space $(X,\|\cdot\|,\tau)$ we set
\[
X^{\circ}\coloneqq\{x'\in X'\;|\;x'\;\tau\text{-sequentially continuous on } \|\cdot\|\text{-bounded sets}\}.
\]

\begin{rem}
Let $(X,\|\cdot\|,\tau)$ be a Saks space.
Then $X^{\circ}$
is a closed linear subspace of the norm dual $X'$ and hence a Banach space 
by \cite[Proposition 2.1, p.~314]{farkas2011}. We note that it is assumed in \cite[Proposition 2.1, p.~314]{farkas2011} 
that the Saks space $(X,\|\cdot\|,\tau)$ is sequentially complete (see \cite[Hypothesis A (ii), p.~310--311]{farkas2011}) 
but an inspection of its proof shows that this assumption is not needed. 
\end{rem}

If $(X,\|\cdot\|,\tau)$ is a sequentially complete Saks space and $(T(t))_{t\geq 0}$ a 
bi-continuous semigroup on $X$, then the dual map $T'(t)\coloneqq T(t)'$ belongs to $\mathcal{L}(X')$ 
and leaves $X^{\circ}$ invariant for every $t\geq 0$ by \cite[Proposition 2.3, p.~315]{farkas2011}. 
Thus the restriction of the dual semigroup $(T'(t))_{t\geq 0}$ to $X^{\circ}$ forms a semigroup 
$(T^{\circ}(t))_{t\geq 0}$ on $X^{\circ}$ given by
$T^{\circ}(t)x^{\circ}\coloneqq T'(t)x^{\circ}$ for $t\geq 0$ and $x^{\circ}\in X^{\circ}$. This semigroup 
is clearly exponentially bounded (w.r.t.~$\|\cdot\|_{\mathcal{L}(X^{\circ})}$) and 
$\sigma(X^{\circ},X)$-strongly continuous, which implies that it is 
$\gamma^{\circ}$-strongly continuous by \prettyref{defn:mixed_top_Saks} (a) where 
$\gamma^{\circ}\coloneqq\gamma(\|\cdot\|_{X^{\circ}},\sigma(X^{\circ},X))$ 
and $\|\cdot\|_{X^{\circ}}$ denotes the restriction of $\|\cdot\|_{X'}$ to $X^{\circ}$.
In order to get a bi-continuous semigroup on 
$(X^{\circ},\|\cdot\|_{X^{\circ}},\sigma(X^{\circ},X))$, this triple needs to be a 
sequentially complete Saks space and 
$(T^{\circ}(t))_{t\geq 0}$ has to be locally sequentially $\gamma^{\circ}$-equicontinuous. 
Obviously, $\sigma(X^{\circ},X)$ is a coarser Hausdorff locally convex topology on $X^{\circ}$ 
than $\|\cdot\|_{X^{\circ}}$. Setting
\[
p_{N}(x^{\circ})\coloneqq\sup_{x\in N }|\langle x^{\circ},x \rangle|,\quad x^{\circ}\in X^{\circ},
\]
for finite $N\subset B_{\|\cdot\|}$, we get a directed system of seminorms that generates 
the $\sigma(X^{\circ},X)$-topology with
\[
\|x^{\circ}\|_{X^{\circ}}=\sup\{p_{N}(x^{\circ})\;|\;N\subset B_{\|\cdot\|}\,\text{finite}\}
\]
for all $x^{\circ}\in X^{\circ}$. 
Therefore $(X^{\circ},\|\cdot\|_{X^{\circ}},\sigma(X^{\circ},X))$ is a Saks space. 
The sequential completeness of $(X^{\circ},\|\cdot\|_{X^{\circ}},\sigma(X^{\circ},X))$ 
is not automatically fulfilled 
(see \cite[Example 2.2, p.~314--315]{farkas2011}) and for the local sequential 
$\gamma^{\circ}$-equicontinuity of $(T^{\circ}(t))_{t\geq 0}$ we need an additional assumption 
as well (see \cite[Hypothesis B and C, p.~314--315]{farkas2011}).

\begin{defn}\label{defn:adj_bi_cont}
Let $(X,\|\cdot\|,\tau)$ be a triple such that $(X,\|\cdot\|)$ is a Banach space, 
$\tau$ a Hausdorff locally convex topology on $X$ that is coarser than the $\|\cdot\|$-topology. 
We call $(X,\|\cdot\|,\tau)$ \emph{dual-consistent}, in  short \emph{d-consistent}, if 
\begin{enumerate}
\item[(i)] $X^{\circ}\cap B_{\|\cdot\|_{X'}}$ is sequentially complete w.r.t.~$\sigma(X^{\circ},X)$ where 
$B_{\|\cdot\|_{X'}}\coloneqq\{x'\in X'\;|\;\|x'\|_{X'}\leq 1\}$,
\item[(ii)] every $\|\cdot\|_{X'} $-bounded $\sigma(X^{\circ},X)$-null sequence in $X^{\circ}$ is $\tau$-equicontinuous 
on $\|\cdot\|$-bounded sets. 
\end{enumerate}
\end{defn}

Condition (i) of \prettyref{defn:adj_bi_cont} guarantees that $(X^{\circ},\|\cdot\|_{X^{\circ}},\sigma(X^{\circ},X))$ is sequentially complete. Condition (ii) of \prettyref{defn:adj_bi_cont} 
gives the local sequential $\gamma^{\circ}$-equicontinuity of
$(T^{\circ}(t))_{t\geq 0}$. 

\begin{prop}[{\cite[Proposition 2.4, p.~315]{farkas2011}, \cite[Lemma 1, p.~6]{budde2021}}]
\label{prop:adj_bi_cont}
Let $(X,\|\cdot\|,\tau)$ be a sequentially complete d-consistent Saks space, 
$\|\cdot\|_{X^{\circ}}$ the restriction of $\|\cdot\|_{X'}$ to $X^{\circ}$
and $(T(t))_{t\geq 0}$ a bi-continuous semigroup on $X$ with generator $(A,D(A))$.
Then the following assertions hold:
\begin{enumerate}
\item[(a)] The triple $(X^{\circ},\|\cdot\|_{X^{\circ}},\sigma(X^{\circ},X))$ is a 
sequentially complete Saks space.
\item[(b)] The operators given by $T^{\circ}(t)x^{\circ}\coloneqq T'(t)x^{\circ}$ for $t\geq 0$ and $x^{\circ}\in X^{\circ}$
form a bi-continuous semigroup on $(X^{\circ},\|\cdot\|_{X^{\circ}},\sigma(X^{\circ},X))$ with generator 
$(A^{\circ},D(A^{\circ}))$ fulfilling
\[
D(A^{\circ})=\{x^{\circ}\in X^{\circ}\;|\;\exists\;y^{\circ}\in X^{\circ}\;\forall\;x\in D(A): 
\langle Ax,x^{\circ} \rangle=\langle x,y^{\circ} \rangle\},\quad A^{\circ}x^{\circ}=y^{\circ}.
\]
\end{enumerate}
\end{prop}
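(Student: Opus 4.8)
The plan is to treat parts (a) and (b) in turn, with (a) being little more than a summary of the observations already recorded before the statement. For \prettyref{ass:standard} (i) and (iii) for the triple $(X^{\circ},\|\cdot\|_{X^{\circ}},\sigma(X^{\circ},X))$ there is nothing new: $\sigma(X^{\circ},X)$ is a coarser Hausdorff locally convex topology than $\|\cdot\|_{X^{\circ}}$, and it is norming since $\|x^{\circ}\|_{X'}=\sup_{\|x\|\leq 1}|\langle x^{\circ},x\rangle|$ is already the defining formula of the dual norm. For (ii) I would invoke \prettyref{ass:adj_bi_cont} (i): a $\|\cdot\|_{X'}$-bounded $\sigma(X^{\circ},X)$-Cauchy sequence lies, after rescaling, in $X^{\circ}\cap B_{\|\cdot\|_{X'}}$, which is $\sigma(X^{\circ},X)$-sequentially complete by hypothesis, so it $\sigma(X^{\circ},X)$-converges.

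For (b), the semigroup law, $T^{\circ}(0)=\id$ and $\|T^{\circ}(t)\|_{\mathcal{L}(X^{\circ})}\leq\|T(t)\|_{\mathcal{L}(X)}\leq Me^{\omega t}$ are immediate from $(ST)'=T'S'$, the commutativity of $(T(t))_{t\geq 0}$, and from $T^{\circ}(t)$ being the restriction of $T'(t)\in\mathcal{L}(X')$ to the $\|\cdot\|_{X'}$-closed, $T'(t)$-invariant subspace $X^{\circ}$. For $\sigma(X^{\circ},X)$-strong continuity I fix $x^{\circ}\in X^{\circ}$, $x\in X$ and observe that along any $t_{n}\to t_{0}$ in $[0,\infty)$ the sequence $(T(t_{n})x)_{n\in\N}$ is $\|\cdot\|$-bounded and $\tau$-convergent to $T(t_{0})x$, so $\langle x^{\circ},T(t_{n})x\rangle\to\langle x^{\circ},T(t_{0})x\rangle$ by $\tau$-sequential continuity of $x^{\circ}$ on $\|\cdot\|$-bounded sets; since $[0,\infty)$ is metrizable, $t\mapsto T^{\circ}(t)x^{\circ}$ is $\sigma(X^{\circ},X)$-continuous. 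The decisive point is local bi-equicontinuity, where \prettyref{ass:adj_bi_cont} (ii) enters: given a $\|\cdot\|_{X'}$-bounded sequence $(x_{n}^{\circ})_{n\in\N}$ with $\sigma(X^{\circ},X)$-limit $x^{\circ}$, the sequence $z_{n}^{\circ}:=x_{n}^{\circ}-x^{\circ}$ is $\|\cdot\|_{X'}$-bounded and $\sigma(X^{\circ},X)$-null, hence $\tau$-equicontinuous on $\|\cdot\|$-bounded sets; for fixed $x\in X$ and $t_{0}>0$ the orbit $K:=\{T(s)x\;|\;s\in[0,t_{0}]\}$ is $\|\cdot\|$-bounded and $\tau$-compact, so the pointwise convergence $\langle z_{n}^{\circ},\cdot\rangle\to 0$ on $K$ upgrades to uniform convergence on $K$ by the standard argument combining equicontinuity with compactness; this reads $\sup_{s\in[0,t_{0}]}|\langle T^{\circ}(s)z_{n}^{\circ},x\rangle|\to 0$, the required local bi-equicontinuity. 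Together with (a), $(T^{\circ}(t))_{t\geq 0}$ is a $\sigma(X^{\circ},X)$-bi-continuous semigroup; write $(A^{\circ},D(A^{\circ}))$ for its generator.

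It remains to identify $(A^{\circ},D(A^{\circ}))$. The inclusion ``$\subseteq$'' is direct: if $x^{\circ}\in D(A^{\circ})$ and $y^{\circ}:=A^{\circ}x^{\circ}$, then for $x\in D(A)$ one has $\langle x,y^{\circ}\rangle=\lim_{t\to 0\rlim}\langle x^{\circ},\tfrac{1}{t}(T(t)x-x)\rangle$, and since $x\in D(A)$ the difference quotients $\tfrac{1}{t}(T(t)x-x)$ are $\|\cdot\|$-bounded and $\tau$-converge to $Ax$, so $\langle x,y^{\circ}\rangle=\langle Ax,x^{\circ}\rangle$ by $\tau$-sequential continuity of $x^{\circ}$ on $\|\cdot\|$-bounded sets. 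For ``$\supseteq$'', suppose $x^{\circ},y^{\circ}\in X^{\circ}$ satisfy $\langle Ax,x^{\circ}\rangle=\langle x,y^{\circ}\rangle$ for all $x\in D(A)$. For $x\in X$ and $t>0$, \prettyref{thm:generator} (d) gives $\int_{0}^{t}T(s)x\d s\in D(A)$ with $A\int_{0}^{t}T(s)x\d s=T(t)x-x$, so applying the hypothesis to $\int_{0}^{t}T(s)x\d s$ yields $\langle T(t)x-x,x^{\circ}\rangle=\langle\int_{0}^{t}T(s)x\d s,y^{\circ}\rangle$. The estimate $\|\int_{0}^{t}T(s)x\d s\|\leq\int_{0}^{t}\|T(s)x\|\d s\leq Me^{|\omega|}t\|x\|$ for $t\in(0,1]$ then bounds $\sup_{t\in(0,1]}\tfrac{1}{t}\|T^{\circ}(t)x^{\circ}-x^{\circ}\|_{X'}\leq Me^{|\omega|}\|y^{\circ}\|_{X'}<\infty$, while dividing the identity by $t$ gives $\langle\tfrac{1}{t}(T^{\circ}(t)x^{\circ}-x^{\circ}),x\rangle=\langle\tfrac{1}{t}\int_{0}^{t}T(s)x\d s,y^{\circ}\rangle$; since $\tfrac{1}{t}\int_{0}^{t}T(s)x\d s$ is $\|\cdot\|$-bounded for $t\in(0,1]$ and $\tau$-converges to $x$ as $t\to 0\rlim$, the right-hand side tends to $\langle x,y^{\circ}\rangle$ by $\tau$-sequential continuity of $y^{\circ}$ on $\|\cdot\|$-bounded sets. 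Hence $x^{\circ}\in D(A^{\circ})$ and $A^{\circ}x^{\circ}=y^{\circ}$, with $y^{\circ}$ uniquely determined by $x^{\circ}$ since $D(A)$ is bi-dense (\prettyref{thm:generator} (b)) and $y^{\circ}\in X^{\circ}$. I expect the genuine obstacle to be the local bi-equicontinuity step, where \prettyref{ass:adj_bi_cont} (ii), the $\tau$-compactness of orbits, and the passage from pointwise to uniform convergence must be dovetailed; the remaining arguments are routine bookkeeping with the two dualities, the integral identities of \prettyref{thm:generator}, and the bounded-sequence description of $\gamma$-convergence in \prettyref{rem:mixed_top} (e).
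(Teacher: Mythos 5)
Your proof is correct. The paper does not prove \prettyref{prop:adj_bi_cont} itself but cites Farkas (2011, Proposition 2.4) and Budde--Farkas (2021, Lemma 1); your argument is the standard one from those sources and matches the paper's own surrounding discussion, which derives \prettyref{ass:standard} (i) and (iii) for $(X^{\circ},\|\cdot\|_{X^{\circ}},\sigma(X^{\circ},X))$ from the Mackey--Arens theorem, obtains (ii) from \prettyref{ass:adj_bi_cont} (i), and gets local bi-equicontinuity from \prettyref{ass:adj_bi_cont} (ii) via exactly the equicontinuity-on-the-$\tau$-compact-orbit argument you give; the identification of $D(A^{\circ})$ through the integral identity of \prettyref{thm:generator} (d) is likewise the intended route.
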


Next, we take a closer look at the space $X^{\circ}$ and its relation to the dual space $(X,\gamma)'$ where 
$\gamma$ is the mixed topology of $\|\cdot\|$ and $\tau$. Both spaces coincide if $(X,\gamma)$ is a Mazur space. 
This will be a quite helpful observation in the next sections.

\begin{defn}[{\cite[p.~40]{wilansky1981}}]
A Hausdorff locally convex space $(X,\vartheta)$ with scalar field $\mathbb{K}\coloneqq\R$ or $\C$ is called 
\emph{Mazur space} if 
\[
(X,\vartheta)'=\{x'\colon X\to\mathbb{K}\;|\;x'\;\text{linear and }\vartheta\text{-sequentially continuous}\}
=:X_{\operatorname{seq-}\vartheta}'.
\]
\end{defn}

In the special case that $\vartheta=\sigma(X',X)$ a Banach space $(X,\|\cdot\|)$ such that $(X',\sigma(X',X))$ is a
Mazur space is also called \emph{d-complete} \cite[p.~624]{kappeler1986} or a $\mu B$ \emph{space} \cite[p.~45]{wilansky1981} 
or having \emph{Mazur's property} \cite[p.~51]{leung1991}.

\begin{rem}\label{rem:bi_cont_dual}
Let $(X,\|\cdot\|,\tau)$ be a Saks space.
Then $(X,\gamma)'$ is a closed linear subspace of $X'$, in particular a Banach space, 
and 
\[
(X,\gamma)'=\overline{(X,\tau)'}^{\|\cdot\|_{X'}}
\]
by \cite[I.1.18 Proposition, p.~15]{cooper1978}. Furthermore, $X^{\circ}=X_{\operatorname{seq-}\gamma}'$
by \cite[I.1.10 Proposition, p.~9]{cooper1978} and since we always have 
$X_{\operatorname{seq-}\gamma}'\subset X_{\operatorname{seq-}\|\cdot\|}'=X'$.
Thus $(X,\gamma)$ is a Mazur space if and only if 
\[
X^{\circ}=(X,\gamma)'.
\]
\end{rem}

\begin{prop}\label{prop:mazur}
Let $(X,\|\cdot\|,\tau)$ be a Saks space. 
\begin{enumerate}
\item[(a)] If $(X,\tau)$ is a Mazur space and every $\tau$-convergent sequence $\|\cdot\|$-bounded, 
then $(X,\gamma)$ is also a Mazur space.
\item[(b)] If $(X,\gamma)$ is a \emph{C-sequential space}, i.e.~every convex sequentially open subset of $(X,\gamma)$ 
is already open (see \cite[p.~273]{snipes1973}), then $(X,\gamma)$ is a Mazur space.
\end{enumerate}
\end{prop}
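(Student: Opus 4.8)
For part (a), the plan is to take a linear functional $x'\colon X\to\mathbb{K}$ that is $\gamma$-sequentially continuous and show it lies in $(X,\gamma)'$. First I would observe that, since $\tau\leq\gamma$, a $\tau$-convergent sequence is not automatically $\gamma$-convergent, so one cannot directly restrict $x'$ to a $\tau$-sequentially continuous functional. Instead I would use the Saks-space structure: by \prettyref{rem:mixed_top}(e) a sequence is $\gamma$-convergent iff it is $\tau$-convergent and $\|\cdot\|$-bounded, so $\gamma$-sequential continuity of $x'$ gives in particular that $x'$ is bounded on $\|\cdot\|$-bounded sets (test it on a $\|\cdot\|$-bounded sequence that $\tau$-converges to $0$; if $x'$ were unbounded on the unit ball one rescales to produce a $\gamma$-null sequence on which $x'$ does not vanish). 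Hence $x'\in X'$. Now $x'$ is $\|\cdot\|$-continuous, so it is $\tau_{\|\cdot\|}$-sequentially continuous, and being $\gamma$-sequentially continuous it is in particular $\tau$-sequentially continuous on $\|\cdot\|$-bounded sets; combined with the Mazur property of $(X,\tau)$ — here one must be slightly careful, since $(X,\tau)$ Mazur says every $\tau$-sequentially continuous linear functional is in $(X,\tau)'$, and $x'$ restricted to $\|\cdot\|$-bounded sets is $\tau$-sequentially continuous but we need it on all of $X$. The clean way is: $x'$ is $\|\cdot\|$-continuous hence $\tau_{\|\cdot\|}$-sequentially continuous; a fortiori, since $\gamma$-convergence implies... no — rather, use that $x'\in X'=(X,\tau_{\|\cdot\|})'$ and apply \prettyref{rem:bi_cont_dual}: $X^{\circ}=(X,\gamma)'$ iff $(X,\gamma)$ is Mazur, and $(X,\tau)$ Mazur gives $(X,\tau)'=X'_{\operatorname{seq}\text{-}\tau}\supseteq X'_{\operatorname{seq}\text{-}\gamma}$-restrictions... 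The most economical route is: show $X'_{\operatorname{seq}\text{-}\gamma}\subseteq X^{\circ}$ (every $\gamma$-sequentially continuous functional is bounded and $\tau$-sequentially continuous on $\|\cdot\|$-bounded sets, hence in $X^{\circ}$), and show $X^{\circ}\subseteq(X,\gamma)'$ using that $(X,\tau)$ is Mazur: an element $x^{\circ}\in X^{\circ}$ is $\|\cdot\|$-continuous and $\tau$-sequentially continuous on $\|\cdot\|$-bounded sets, and one extends this to genuine $\tau$-sequential continuity on $X$ by a boundedness/truncation argument, then invokes the Mazur property of $(X,\tau)$ to get $x^{\circ}\in(X,\tau)'\subseteq(X,\gamma)'$. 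Since trivially $(X,\gamma)'\subseteq X'_{\operatorname{seq}\text{-}\gamma}$, all three coincide and $(X,\gamma)$ is Mazur by definition, via \prettyref{rem:bi_cont_dual}.

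For part (b), the plan is the standard equivalence between C-sequentiality and Mazur's property. Let $x'\colon X\to\mathbb{K}$ be linear and $\gamma$-sequentially continuous; I want $x'$ to be $\gamma$-continuous. Consider $U:=\{x\in X\colon |x'(x)|<1\}$, which is convex, balanced and absolutely convex. I would show $U$ is $\gamma$-sequentially open: if $x_n\to x_0$ in $\gamma$ with $x_0\in U$, then $x'(x_n)\to x'(x_0)$ in $\mathbb{K}$ by sequential continuity, and since $|x'(x_0)|<1$ eventually $|x'(x_n)|<1$, i.e. $x_n\in U$ eventually; thus $U$ is sequentially open. By the C-sequential hypothesis, the convex sequentially open set $U$ is $\gamma$-open, and since $U$ is an absolutely convex $\gamma$-neighbourhood of $0$ on which $|x'|$ is bounded by $1$, it follows that $x'$ is $\gamma$-continuous. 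Hence $(X,\gamma)'=X'_{\operatorname{seq}\text{-}\gamma}$ and $(X,\gamma)$ is Mazur.

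The main obstacle I anticipate is the bookkeeping in part (a): carefully passing between "sequentially continuous on $\|\cdot\|$-bounded sets" (the defining property of $X^{\circ}$, and what $\gamma$-convergence sees via \prettyref{rem:mixed_top}(e)) and "genuinely $\tau$-sequentially continuous on $X$" (what the Mazur property of $(X,\tau)$ consumes). The bridge is always the same boundedness argument — a $\gamma$-sequentially continuous or $X^{\circ}$-type functional is automatically $\|\cdot\|$-bounded, hence in $X'$, hence $\|\cdot\|$-continuous — after which one only ever tests continuity along $\|\cdot\|$-bounded sequences, and there the two notions agree. Part (b) is routine once one writes down the right neighbourhood $U$.
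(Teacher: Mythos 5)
Your part (b) is fine: the set $U=\{x\in X\;|\;|x'(x)|<1\}$ is convex and $\gamma$-sequentially open, hence $\gamma$-open by C-sequentiality, and $x'$ is bounded by $1$ on this neighbourhood of $0$, hence $\gamma$-continuous. This is exactly the proof of \cite[Theorem 7.4, p.~52]{wilansky1981}, which the paper simply cites.

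Part (a) has a genuine gap, precisely at the step you yourself flag as delicate. Your final route is the chain $X_{\operatorname{seq-}\gamma}'\subseteq X^{\circ}\subseteq(X,\tau)'\subseteq(X,\gamma)'\subseteq X_{\operatorname{seq-}\gamma}'$, where the second inclusion is to come from upgrading ``$\tau$-sequentially continuous on $\|\cdot\|$-bounded sets'' to genuine $\tau$-sequential continuity on all of $X$ by a ``boundedness/truncation argument''. No such upgrade exists: a $\tau$-null sequence need not be norm-bounded, and rescaling it into the unit ball multiplies the values of $x'$ by scalars tending to $0$, so one learns nothing about $x'$ along the original sequence. In fact the intermediate claim $X^{\circ}\subseteq(X,\tau)'$ is false in general: since $(X,\gamma)'=\overline{(X,\tau)'}^{\|\cdot\|_{X'}}\subseteq X^{\circ}$ by \prettyref{rem:bi_cont_dual}, your chain would force $(X,\tau)'$ to be norm-closed in $X'$. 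For $X=\mathrm{C}_{\operatorname{b}}(\R)$ and $\tau=\tau_{\operatorname{co}}$, which is a Mazur space, one has $(X,\tau_{\operatorname{co}})'=$ compactly supported measures while $(X,\gamma)'=\mathrm{M}_{\operatorname{t}}(\R)$; a Gaussian measure is $\gamma$-continuous, hence lies in $X^{\circ}$ and is $\gamma$-sequentially continuous, but it is not $\tau_{\operatorname{co}}$-continuous and not even globally $\tau_{\operatorname{co}}$-sequentially continuous (evaluate it on bumps of height $e^{n^{2}}$ supported in $[n,n+1]$).

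The correct target is therefore not $(X,\tau)'$ but $(X,\gamma)'$, which by \cite[I.1.7 Corollary, p.~8]{cooper1978} consists exactly of the linear functionals that are $\tau$-continuous \emph{on $\|\cdot\|$-bounded sets only}. This is the paper's route: $\gamma$-sequential continuity gives $\tau$-sequential continuity on $\|\cdot\|$-bounded sets via \prettyref{rem:mixed_top} (e), the Mazur property of $(X,\tau)$ is used only to pass to $\tau$-continuity on $\|\cdot\|$-bounded sets, and Cooper's corollary then yields $\gamma$-continuity. Your preliminary observations --- that a $\gamma$-sequentially continuous functional is norm-bounded by rescaling, and that $X_{\operatorname{seq-}\gamma}'=X^{\circ}$ --- are correct and can be retained; it is only the detour through $(X,\tau)'$ that must be abandoned.
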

\begin{proof}
Part (b) is a direct consequence of \cite[Theorem 7.4, p.~52]{wilansky1981}. Let us turn to part (a). 
If $x'\colon X\to \mathbb{K}$ is linear and $\gamma$-sequentially continuous, then it is $\tau$-sequentially continuous 
on $\|\cdot\|$-bounded sets by \cite[I.1.10 Proposition, p.~9]{cooper1978} and thus $\tau$-continu\-ous as 
$(X,\tau)$ is a Mazur space and every $\tau$-convergent sequence $\|\cdot\|$-bounded. But this implies that $x'$ 
is $\gamma$-continuous since $\tau$ is coarser than $\gamma$.
\end{proof}

Examples of C-sequential spaces $(X,\gamma)$ are given in \cite[3.19, 3.20 Remarks, p.~14--15]{kruse_schwenninger2022} 
and \cite[3.23 Corollary (c), p.~16]{kruse_schwenninger2022}. We fix the following definition 
for the rest of the paper.

\begin{defn}
We call a Saks space $(X,\|\cdot\|,\tau)$ a Mazur space if $(X,\gamma)$ is a Mazur space.
\end{defn}
 
Now, let us revisit \prettyref{defn:adj_bi_cont} and give sufficient conditions in terms of the mixed topology $\gamma$ 
when the conditions of this definition are fulfilled. For that purpose we recall that a Hausdorff locally convex space $(X,\vartheta)$
is called $c_{0}$\emph{-barrelled} if every $\sigma((X,\vartheta)',X)$-null sequence in $(X,\vartheta)'$ 
is $\vartheta$-equicontinuous (see \cite[p.~249]{jarchow1981}, or \cite[Definition, p.~353]{webb1968} where such spaces are called 
\emph{sequentially barrelled}).

\begin{thm}\label{thm:sufficient_ass_adj_bi_cont}
Let $(X,\|\cdot\|,\tau)$ be a sequentially complete Saks space 
and $X_{\gamma}'\coloneqq(X,\gamma)'$.
\begin{enumerate}
\item[(a)] Let $(X,\gamma)$ be a Mazur space. Then condition (i) of 
\prettyref{defn:adj_bi_cont} is fulfilled if and only 
if $(X_{\gamma}',\tau_{\operatorname{c}}(X_{\gamma}',(X,\|\cdot\|)))$ is sequentially complete 
where $\tau_{\operatorname{c}}(X_{\gamma}',(X,\|\cdot\|))$ is the \emph{topology of uniform convergence on compact subsets} of $(X,\|\cdot\|)$. 
\item[(b)] If $(X,\gamma)$ is a $c_{0}$-barrelled Mazur space, then $(X,\|\cdot\|,\tau)$ is d-consistent. 
In particular, if $(X,\gamma)$ is a Mackey--Mazur space, then $(X,\|\cdot\|,\tau)$ is d-consistent. 
\end{enumerate}
\end{thm}
\begin{proof} 
(a) We have $X_{\gamma}'=X^{\circ}$ by \prettyref{rem:bi_cont_dual} and so
the triple $(X_{\gamma}',\|\cdot\|_{X_{\gamma}'},\sigma(X_{\gamma}',X))$ is a Saks space 
by our considerations above \prettyref{defn:adj_bi_cont}. Our claim follows from 
\cite[2.3.2 Corollary, p.~55]{wiweger1961} since 
condition (i) of \prettyref{defn:adj_bi_cont} is equivalent to the sequential completeness 
of $(X^{\circ},\|\cdot\|_{X^{\circ}},\sigma(X^{\circ},X))$, and 
$\gamma^{\circ}=\tau_{\operatorname{c}}(X_{\gamma}',(X,\|\cdot\|))$ 
by \cite[3.22 Proposition (a), p.~16]{kruse_schwenninger2022}.

(b) From \cite[I.1.7 Corollary, p.~7]{cooper1978}, \cite[I.1.10 Proposition, p.~9]{cooper1978}
and $(X,\gamma)$ being a Mazur space, we deduce that condition (ii) of \prettyref{defn:adj_bi_cont} is equivalent 
to the condition that every $\gamma^{\circ}$-null sequence in $X_{\gamma}'$ is $\gamma$-equicontinuous. 
Since every $\gamma^{\circ}$-null sequence is a $\sigma(X_{\gamma}',X)$-null sequence, it follows from 
$(X,\gamma)$ being $c_{0}$-barrelled that condition (ii) of \prettyref{defn:adj_bi_cont} 
is satisfied. From \cite[Proposition 4.4, p.~354]{webb1968} we deduce that $(X_{\gamma}',\sigma(X_{\gamma}',X))$ 
is sequentially complete and thus condition (i) of \prettyref{defn:adj_bi_cont} is also fulfilled 
by part (a) if $(X,\gamma)$ is a $c_{0}$-barrelled Mazur space.

If $(X,\gamma)$ is a Mackey--Mazur space, then it is $c_{0}$-barrelled 
by \cite[Proposition 4.3, p.~354]{webb1968} because $(X,\gamma)$ is sequentially complete.
\end{proof}

Let us come to some examples of sequentially complete d-consistent Mazur--Saks spaces. 
First, we recall some notions from general topology. A completely regular space $\Omega$ is called \emph{$k_{\R}$-space} if any map $f\colon\Omega\to\R$ 
whose restriction to each compact $K\subset\Omega$ is continuous, is already continuous on $\Omega$ 
(see \cite[p.~487]{michael1973}). In particular, locally compact Hausdorff spaces clearly are Hausdorff $k_{\R}$-spaces. In addition \emph{Polish spaces}, i.e.~separably completely metrisable spaces, are Hausdorff $k_{\R}$-spaces by \cite[Proposition 11.5, p.~181]{james1999} and \cite[3.3.20, 3.3.21 Theorems, p.~152]{engelking1989}. We recall that a Hausdorff space $\Omega$ is called \emph{hemicompact} 
if there is a sequence $(K_{n})_{n\in\N}$ of compact sets in $\Omega$ 
such that for every compact set $K\subset\Omega$ there is $N\in\N$ such that $K\subset K_{N}$ 
(see \cite[Exercises 3.4.E, p.~165]{engelking1989}). 
For instance, $\sigma$-compact locally compact Hausdorff spaces are 
hemicompact Hausdorff $k_{\R}$-spaces by \cite[Exercises 3.8.C (b), p.~195]{engelking1989}. 
Further, there are hemicompact Hausdorff $k_{\R}$-spaces that are neither locally compact nor metrisible by 
\cite[p.~267]{warner1958}.

Second, let $\mathrm{C}_{\operatorname{b}}(\Omega)$ be the space of bounded continuous functions on a 
completely regular Hausdorff space $\Omega$ and  
\[
\|f\|_{\infty}\coloneqq\sup_{x\in\Omega}|f(x)|,\quad f\in \mathrm{C}_{\operatorname{b}}(\Omega).
\]
We denote by $\tau_{\operatorname{co}}$ the \emph{compact-open topology}, i.e.~the topology of uniform convergence 
on compact subsets of $\Omega$, which is induced by the directed system of seminorms 
$\mathcal{P}_{\tau_{\operatorname{co}}}$ given by 
\[
p_{K}(f)\coloneqq\sup_{x\in K}|f(x)|,\quad f\in \mathrm{C}_{\operatorname{b}}(\Omega),
\]
for compact $K\subset \Omega$. 

Let $\mathcal{V}$ denote the set of all non-negative bounded functions $\nu$ on $\Omega$ 
that vanish at infinity, i.e.~for every $\varepsilon>0$ the set $\{x\in\Omega\;|\;\nu(x)\geq\varepsilon\}$ is compact. 
Let $\beta_{0}$ be the Hausdorff locally convex topology on $\mathrm{C}_{\operatorname{b}}(\Omega)$ that is induced 
by the seminorms 
\[
|f|_{\nu}\coloneqq\sup_{x\in\Omega}|f(x)|\nu(x),\quad f\in\mathrm{C}_{\operatorname{b}}(\Omega),
\]
for $\nu\in\mathcal{V}$. Due to \cite[Theorem 2.4, p.~316]{sentilles1972} 
we have $\gamma(\|\cdot\|_{\infty},\tau_{\operatorname{co}})=\beta_{0}$. 
Let $\mathrm{M}_{\operatorname{t}}(\Omega)$ denote the space of bounded Radon measures 
on a completely regular Hausdorff space $\Omega$ and $\|\cdot\|_{\mathrm{M}_{\operatorname{t}}(\Omega)}$ be the 
total variation norm (see e.g.~\cite[p.~439--440]{kunze2011} where $\mathrm{M}_{\operatorname{t}}(\Omega)$ is called 
$\mathcal{M}_{0}(\Omega)$). By \cite[Theorem 4.4, p.~320]{sentilles1972} it holds 
$\mathrm{M}_{\operatorname{t}}(\Omega)=(\mathrm{C}_{\operatorname{b}}(\Omega),\beta_{0})'$. 

Furthermore, a Banach space $(X,\|\cdot\|)$ is called \emph{weakly compactly generated (WCG)} 
if there is a $\sigma(X,X')$-compact set $K\subset X$ such that $X=\overline{\operatorname{span}}(K)$ where 
$\overline{\operatorname{span}}(K)$ denotes the $\|\cdot\|$-closure of $\operatorname{span}(K)$
(see \cite[Definition 13.1, p.~575]{fabian2011}).
A Banach space $(X,\|\cdot\|)$ is called \emph{strongly weakly compactly generated space (SWCG)} if there exists 
a $\sigma(X,X')$-compact set $K\subset X$ such that for every $\sigma(X,X')$-compact set $L\subset X$ 
and $\varepsilon>0$ there is $n\in\N$ with $L\subset (nK+\varepsilon B_{\|\cdot\|})$ by
\cite[p.~387]{schluechtermann1988}. In particular, every SWCG space is a WCG space by 
\cite[Theorem 2.5, p.~390]{schluechtermann1988}.
Examples of SWCG spaces are reflexive Banach spaces, separable \emph{Schur spaces} (i.e.~weakly convergent sequences 
are convergent \cite[p.~253]{fabian2011}), 
the space $\mathcal{N}(H)$ of trace class operators for a separable Hilbert space $H$ and the space 
$L^{1}(\Omega,\nu)$ w.r.t.~a $\sigma$-finite measure $\nu$ 
by \cite[2.3 Examples, p.~389--390]{schluechtermann1988}. Further examples of WCG spaces are 
separable Banach spaces and the space $c_{0}(\Gamma)$ of all real (or complex) valued bounded functions on 
a non-empty set $\Gamma$ that vanish at infinity by \cite[Examples, p.~575--576]{fabian2011}. 
The spaces $\ell^{\infty}$ and $\ell^{1}(\Gamma)$ for an uncountable set $\Gamma$ are not WCG 
by \cite[Examples (iv), p.~576]{fabian2011} and there exist examples of WCG spaces that are not SWCG 
by \cite[2.6 Example, p.~391]{schluechtermann1988}.
Moreover, we recall that a Banach space $(X,\|\cdot\|)$ has an \emph{almost shrinking basis} 
if it has a Schauder basis such that its associated sequence of coefficient functionals forms a Schauder basis 
of $(X',\mu(X',X))$ where $\mu(X',X)$ is the Mackey topology on $X'$ (see \cite[p.~75]{kalton1973}).

\begin{exa}\label{ex:mackey_mazur_examples}
\begin{enumerate}
\item[(a)] Let $(X,\|\cdot\|)$ be a Banach space and $\tau_{\|\cdot\|}$ the $\|\cdot\|$-topology. 
Then $\gamma(\|\cdot\|,\tau_{\|\cdot\|})=\tau_{\|\cdot\|}$ by \prettyref{defn:mixed_top_Saks} (a), 
the barrelled space $(X,\tau_{\|\cdot\|})$ is a C-sequential Mackey space, in particular Mazur. 
Thus $(X,\|\cdot\|,\tau_{\|\cdot\|})$ is a sequentially complete d-consistent Mazur--Saks space
by \prettyref{thm:sufficient_ass_adj_bi_cont} (b).
\item[(b)] Let $\Omega$ be a hemicompact Hausdorff $k_{\R}$-space, or a Polish space. 
Then 
\[
 \gamma(\|\cdot\|_{\infty},\tau_{\operatorname{co}})
=\beta_{0}
=\mu(\mathrm{C}_{\operatorname{b}}(\Omega),\mathrm{M}_{\operatorname{t}}(\Omega))
\]
and $(\mathrm{C}_{\operatorname{b}}(\Omega),\beta_{0})$ is a C-sequential Mackey space by 
\cite[3.20 Remark (a), p.~15]{kruse_schwenninger2022}. 
Hence $(\mathrm{C}_{\operatorname{b}}(\Omega),\|\cdot\|_{\infty},\tau_{\operatorname{co}})$ 
is a sequentially complete Mazur--Saks space by \cite[p.~19]{kruse_schwenninger2022} 
and d-consistent by \prettyref{thm:sufficient_ass_adj_bi_cont} (b).
\item[(c)] Let $H^{\infty}$ denote the Hardy space of bounded holomorphic functions on the open unit disc $\mathbb{D}\subset\C$ and 
$\beta_{1}\coloneqq \gamma({\|\cdot\|_{\infty}}_{\mid H^{\infty}},\tau_{\|\cdot\|_{1}})$ where $\tau_{\|\cdot\|_{1}}$ is the 
topology induced by the norm $\|\cdot\|_{1}$ given by 
\[
\|f\|_{1}\coloneqq \sup_{0<r<1}\int_{0}^{2\pi}|f(r\e^{\mathrm{i}\theta})|\d\theta,\quad f\in H^{\infty}.
\]
Then $(H^{\infty},\beta_{1})$ is a C-sequential Mackey space by \cite[V.2.14 Corollary, p.~239]{cooper1978} and 
\cite[Proposition 5.7, p.~2681--2682]{kruse_meichnser_seifert2018}. 
Thus $(H^{\infty},{\|\cdot\|_{\infty}}_{\mid H^{\infty}},\tau_{\|\cdot\|_{1}})$ is a sequentially complete Mazur--Saks space 
by \cite[V.2.5 Proposition, p.~234]{cooper1978} and d-consistent by \prettyref{thm:sufficient_ass_adj_bi_cont} (b). 
\item[(d)] Let $(X_{0},\|\cdot\|_{0})$ be a WCG-Schur space. 
Then $(X_{0}',\|\cdot\|_{X_{0}'},\sigma(X_{0}',X_{0}))$ is a sequentially complete Saks space and 
$\gamma(\|\cdot\|_{X_{0}'},\sigma(X_{0}',X_{0}))=\tau_{\operatorname{c}}(X_{0}',X_{0})$ by 
\cite[Example E), p.~66]{wiweger1961} and \cite[p.~21]{kruse_schwenninger2022}. 
Since $(X_{0},\|\cdot\|_{0})$ is a WCG space, $(X_{0}',\sigma(X_{0}',X_{0}))$ is a Mazur space by 
\cite[Corollary 3.5, p.~46]{wilansky1981}. It follows that $(X_{0}',\tau_{\operatorname{c}}(X_{0}',X_{0}))$ is a Mazur space 
by \prettyref{prop:mazur} (a) because every $\sigma(X_{0}',X_{0})$-convergent sequence is $\|\cdot\|_{X_{0}'}$-bounded 
by the uniform boundedness principle. It is also a Mackey space by \cite[Theorem 3.2, p.~85]{martinpeinador2015}, 
in particular $\tau_{\operatorname{c}}(X_{0}',X_{0})=\mu(X_{0}',X_{0})$, because $(X_{0},\|\cdot\|_{0})$ is a Schur space. 
We deduce from \prettyref{thm:sufficient_ass_adj_bi_cont} (b) 
that $(X_{0}',\|\cdot\|_{X_{0}'},\sigma(X_{0}',X_{0}))$ is d-consistent.
\item[(e)] Let $(X_{0},\|\cdot\|_{0})$ be a Banach space and 
\begin{enumerate}
\item[(i)] $(X_{0},\|\cdot\|_{0})$ be an SWCG space, or
\item[(ii)] $(X_{0},\|\cdot\|_{0})$ have an almost shrinking basis and let $(X_{0},\sigma(X_{0},X_{0}'))$ be sequentially complete.
\end{enumerate}
Then $\gamma(\|\cdot\|_{X_{0}'},\mu(X_{0}',X_{0}))=\mu(X_{0}',X_{0})$ and $(X_{0}',\mu(X_{0}',X_{0}))$ 
is a C-sequen\-tial Mackey space in both cases by \cite[3.19 Remark (c), p.~14]{kruse_schwenninger2022} 
and \cite[3.20 Remark (c), p.~15]{kruse_schwenninger2022}. 
Therefore $(X_{0}',\|\cdot\|_{X_{0}'},\mu(X_{0}',X_{0}))$ is a sequentially complete Mazur--Saks space by 
\cite[p.~22]{kruse_schwenninger2022} and d-consistent by \prettyref{thm:sufficient_ass_adj_bi_cont} (b).
\item[(f)] Let $H$ be a separable Hilbert space and $\mathcal{N}(H)$ the space of trace class operators 
in $\mathcal{L}(H)=\mathcal{N}(H)'$.  
Let $\tau_{\operatorname{sot}^{\ast}}$ be the symmetric
strong operator topology, i.e.~the Hausdorff locally convex topology on $\mathcal{L}(H)$ 
generated by the directed system of seminorms
\[
p_{N}(R)\coloneqq\max\bigl(\sup_{x\in N}\|Rx\|_{H},\sup_{x\in N}\|R^{\ast}x\|_{H}\bigr),\quad R\in \mathcal{L}(H),
\]
for finite $N\subset H$ where $R^{\ast}$ is the adjoint of $R$. 
We denote by $\beta_{\operatorname{sot}^{\ast}}$ the mixed topology 
$\gamma(\|\cdot\|_{\mathcal{L}(H)},\tau_{\operatorname{sot}^{\ast}})$. 
The triple $(\mathcal{L}(H),\|\cdot\|_{\mathcal{L}(H)},\tau_{\operatorname{sot}^{\ast}})$ 
is a sequentially complete Saks space, $\beta_{\operatorname{sot}^{\ast}}=\mu(\mathcal{L}(H),\mathcal{N}(H))$ 
and $(\mathcal{L}(H),\beta_{\operatorname{sot}^{\ast}})$ is a C-sequential Mackey space
by \cite[4.12 Example, p.~24--25]{kruse_schwenninger2022}. 
We derive from \prettyref{thm:sufficient_ass_adj_bi_cont} (b) that 
$(\mathcal{L}(H),\|\cdot\|_{\mathcal{L}(H)},\tau_{\operatorname{sot}^{\ast}})$ is d-consistent.
\end{enumerate} 
\end{exa}

That $(X,\|\cdot\|,\tau_{\|\cdot\|})$ in \prettyref{ex:mackey_mazur_examples} (a) 
is a sequentially complete d-consistent Mazur--Saks space is well-known 
(see \cite[Proposition 3.18, p.~78]{kuehnemund2001}). 
That the triple $(\mathrm{C}_{\operatorname{b}}(\Omega),\|\cdot\|_{\infty},\tau_{\operatorname{co}})$ 
in \prettyref{ex:mackey_mazur_examples} (b) is 
a sequentially complete d-consistent Mazur--Saks space
is contained in \cite[p.~318]{farkas2011} if $\Omega$ is a $\sigma$-compact locally compact Hausdorff space, 
or a Polish space. 
In regard to example (c) we note that the space $H^{\infty}$ equipped with the induced mixed topology 
$\widetilde{\beta}_{0}\coloneqq \gamma({\|\cdot\|_{\infty}}_{\mid H^{\infty}},{\tau_{\operatorname{co}}}_{\mid H^{\infty}})
={\beta_{0}}_{\mid H^{\infty}}$ 
by \cite[I.4.6 Proposition, p.~44]{cooper1978} is not a Mackey space by \cite[V.2.7 Corollary, p.~235]{cooper1978}. 
Concerning example (e), there are spaces which fulfil condition (ii) but not condition (i) by 
\cite[Example 2.6, p.~391]{schluechtermann1988} and \cite[p.~15]{kruse_schwenninger2022}.

\section{Sun duals for bi-continuous semigroups}
\label{sect:sun_dual}

Let us consider a special case of \prettyref{prop:adj_bi_cont}, 
namely, \prettyref{ex:mackey_mazur_examples} (a).
Let $(X,\|\cdot\|)$ be a Banach space and $(T(t))_{t\geq 0}$ a $\|\cdot\|$-strongly continuous semigroup on 
$X$. Choosing $\tau$ as the $\|\cdot\|$-topology, we deduce that $X^{\circ}=X'$, $\sigma(X^{\circ},X)=\sigma(X',X)$ 
and that the dual semigroup $(T^{\circ}(t)=T'(t))_{t\geq 0}$ on $X'$ is 
bi-continuous on $(X',\|\cdot\|_{X'},\sigma(X',X))$ (cf.~\cite[Proposition 3.18, p.~78]{kuehnemund2001}). 
For such a semigroup the notion of the \emph{sun dual} $X^{\odot}$ was introduced (see \cite[p.~5]{vanneerven1992}), 
namely, the subspace of $X'$ on which the dual semigroup acts $\|\cdot\|_{X'}$-strongly, i.e.\
\[
X^{\odot}\coloneqq\{x'\in X'\;|\;\lim_{t\to 0\rlim}\|T'(t)x'-x'\|_{X'}=0\}.
\]
The generator $(A^{\odot},D(A^{\odot}))$ of the restriction 
$(T^{\odot}(t))_{t\geq 0}\coloneqq(T'(t)_{\mid X^{\odot}})_{t\geq 0}$ is the part of $A'$ in $X^{\odot}$ by 
\cite[Theorem 1.3.3 p.~6]{vanneerven1992}.
We generalise this notion to the semigroup $(T^{\circ}(t))_{t\geq 0}$ from 
\prettyref{prop:adj_bi_cont}, so we introduce the subspace of $X^{\circ}$ 
on which the semigroup $(T^{\circ}(t))_{t\geq 0}$ acts 
$\|\cdot\|_{X'}$-strongly and get the following corollary, which generalises 
\cite[Theorems 1.3.1, 1.3.3 p.~5--6]{vanneerven1992}.

\begin{cor}\label{cor:bi_sun_dual}
Let $(X,\|\cdot\|,\tau)$ be a sequentially complete d-consistent Saks space, 
$\|\cdot\|_{X^{\circ}}$ the restriction of $\|\cdot\|_{X'}$ to $X^{\circ}$  
and $(T(t))_{t\geq 0}$ a bi-continuous semigroup on $X$ with generator $(A,D(A))$. 
We define the \emph{bi-sun dual}
\[
X^{\bullet}\coloneqq\{x^{\circ}\in X^{\circ}\;|\;\lim_{t\to 0\rlim}\|T^{\circ}(t)x^{\circ}-x^{\circ}\|_{X'}=0\}.
\]
Then $X^{\bullet}$ is a $\|\cdot\|_{X^{\circ}}$-closed, 
sequentially $\gamma(\|\cdot\|_{X^{\circ}},\sigma(X^{\circ},X))$-dense, 
$(T^{\circ}(t))_{t\geq 0}$-invariant subspace of $X^{\circ}$. 
Further, $X^{\bullet}=\overline{D(A^{\circ})}^{\|\cdot\|_{X^{\circ}}}$ and 
$(T^{\bullet}(t))_{t\geq 0}\coloneqq(T^{\circ}(t)_{\mid X^{\bullet}})_{t\geq 0}$ is the $\|\cdot\|_{X^{\circ}}$-strongly 
continuous semigroup on $X^{\bullet}$ generated by the part $A^{\bullet}$ of $A^{\circ}$ 
in $X^{\bullet}$ as well as $\omega_ {0}(T^{\bullet})\leq\omega_{0}(T)$.
\end{cor}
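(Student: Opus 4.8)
The plan is to obtain the corollary by transporting \prettyref{thm:generator}~(g) along \prettyref{prop:adj_bi_cont}, applied to the semigroup $(T^{\circ}(t))_{t\geq 0}$ on the Banach space $(X^{\circ},\|\cdot\|_{X^{\circ}})$ in the role of $(X,\|\cdot\|)$. First I would record that, by \prettyref{prop:adj_bi_cont}~(a), the triple $(X^{\circ},\|\cdot\|_{X^{\circ}},\sigma(X^{\circ},X))$ satisfies \prettyref{ass:standard}, and by \prettyref{prop:adj_bi_cont}~(b) the family $(T^{\circ}(t))_{t\geq 0}$ is a $\sigma(X^{\circ},X)$-bi-continuous semigroup on $X^{\circ}$ with generator $(A^{\circ},D(A^{\circ}))$. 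Hence \prettyref{thm:generator} is applicable to $(T^{\circ}(t))_{t\geq 0}$ with $\gamma$ replaced by $\gamma(\|\cdot\|_{X^{\circ}},\sigma(X^{\circ},X))$.

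Next I would observe that, since $\|\cdot\|_{X^{\circ}}$ is just the restriction of $\|\cdot\|_{X'}$, the set $X^{\bullet}$ defined in the statement is exactly the space of $\|\cdot\|_{X^{\circ}}$-strong continuity of $(T^{\circ}(t))_{t\geq 0}$, i.e.\ the space denoted $X_{\operatorname{cont}}$ in \prettyref{thm:generator}~(g) for this semigroup. Applying that part of the theorem then immediately yields that $X^{\bullet}$ is a $\|\cdot\|_{X^{\circ}}$-closed, bi-dense, $(T^{\circ}(t))_{t\geq 0}$-invariant linear subspace of $X^{\circ}$, that $X^{\bullet}=\overline{D(A^{\circ})}^{\|\cdot\|_{X^{\circ}}}$, and that $(T^{\bullet}(t))_{t\geq 0}=(T^{\circ}(t)_{\mid X^{\bullet}})_{t\geq 0}$ is the $\|\cdot\|_{X^{\circ}}$-strongly continuous semigroup on $X^{\bullet}$ generated by the part $A^{\bullet}$ of $A^{\circ}$ in $X^{\bullet}$. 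To upgrade ``bi-dense'' to ``sequentially $\gamma(\|\cdot\|_{X^{\circ}},\sigma(X^{\circ},X))$-dense'' I would invoke \prettyref{rem:mixed_top}~(e) (equivalently the reformulation in \prettyref{rem:generator_mixed_top}): a $\|\cdot\|_{X^{\circ}}$-bounded sequence in $X^{\circ}$ is $\gamma$-convergent precisely when it is $\sigma(X^{\circ},X)$-convergent, so bi-denseness of $X^{\bullet}$ in $X^{\circ}$ is the same as sequential $\gamma$-denseness.

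It then remains to establish $\omega_{0}(T^{\bullet})\leq\omega_{0}(T)$. Here I would use that restriction to an invariant subspace does not increase operator norms: for every $t\geq 0$ one has $\|T^{\circ}(t)\|_{\mathcal{L}(X^{\circ})}\leq\|T'(t)\|_{\mathcal{L}(X')}=\|T(t)\|_{\mathcal{L}(X)}$, since $T^{\circ}(t)=T'(t)_{\mid X^{\circ}}$ and the adjoint has the same operator norm, and likewise $\|T^{\bullet}(t)\|_{\mathcal{L}(X^{\bullet})}\leq\|T^{\circ}(t)\|_{\mathcal{L}(X^{\circ})}$ because $X^{\bullet}$ is $T^{\circ}(t)$-invariant. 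Chaining these inequalities shows that every exponential estimate $\|T(t)\|_{\mathcal{L}(X)}\leq Me^{\omega t}$ is inherited by $(T^{\bullet}(t))_{t\geq 0}$, so by the definition of the growth bound $\omega_{0}(T^{\bullet})\leq\omega_{0}(T^{\circ})\leq\omega_{0}(T)$.

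I do not expect a real obstacle: the statement is essentially a specialisation of \prettyref{thm:generator}~(g). The only points needing a little care are the identification of the abstract strong-continuity space of $(T^{\circ}(t))_{t\geq 0}$ on $(X^{\circ},\|\cdot\|_{X^{\circ}})$ with the set $X^{\bullet}$ as written (immediate from $\|\cdot\|_{X^{\circ}}=\|\cdot\|_{X'}|_{X^{\circ}}$), and the bookkeeping that the generator $A^{\bullet}$ of the restricted semigroup is the part of $A^{\circ}$ in $X^{\bullet}$, which is precisely the content of the last display in \prettyref{thm:generator}~(g).
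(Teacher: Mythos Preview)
Your proposal is correct and matches the paper's own proof essentially verbatim: the paper also deduces everything except the growth bound from \prettyref{thm:generator}~(g), \prettyref{rem:generator_mixed_top} and \prettyref{prop:adj_bi_cont}, and then establishes $\omega_{0}(T^{\bullet})\leq\omega_{0}(T)$ via $\|T^{\bullet}(t)\|_{\mathcal{L}(X^{\bullet})}\leq\|T(t)\|_{\mathcal{L}(X)}$. The only cosmetic difference is that the paper writes out this last inequality as an explicit chain of suprema over unit balls, whereas you invoke the equality $\|T'(t)\|_{\mathcal{L}(X')}=\|T(t)\|_{\mathcal{L}(X)}$ together with two restriction steps; the content is the same.
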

\begin{proof}
We only need to prove $\omega_ {0}(T^{\bullet})\leq\omega_{0}(T)$. The rest of the corollary is a direct 
consequence of \prettyref{thm:generator} (g) and \prettyref{prop:adj_bi_cont}. 
We note that 
\begin{align*}
  \|T^{\bullet}(t)\|_{\mathcal{L}(X^{\bullet})}
&=\sup_{\substack{x^{\bullet}\in X^{\bullet}\\ \|x^{\bullet}\|_{X'}\leq 1}}\|T^{\bullet}(t)x^{\bullet}\|_{X'}
 =\sup_{\substack{x^{\bullet}\in X^{\bullet}\\ \|x^{\bullet}\|_{X'}\leq 1}}
  \sup_{\substack{x\in X\\ \|x\|\leq 1}}|\langle T^{\bullet}(t)x^{\bullet},x\rangle|\\
&=\sup_{\substack{x^{\bullet}\in X^{\bullet}\\ \|x^{\bullet}\|_{X'}\leq 1}}
  \sup_{\substack{x\in X\\ \|x\|\leq 1}}|\langle x^{\bullet},T(t)x\rangle|
 \leq\sup_{\substack{x^{\bullet}\in X'\\ \|x^{\bullet}\|_{X'}\leq 1}}
  \sup_{\substack{x\in X\\ \|x\|\leq 1}}|\langle x^{\bullet},T(t)x\rangle|\\
%=\sup_{\substack{x\in X\\ \|x\|\leq 1}}\|T(t)x\|^{\bullet}
&=\sup_{\substack{x\in X\\ \|x\|\leq 1}}\|T(t)x\|
 =\|T(t)\|_{\mathcal{L}(X)}
\end{align*}
for all $t\geq 0$, yielding $\omega_ {0}(T^{\bullet})\leq\omega_{0}(T)$.
\end{proof}

\begin{rem}
Let $(X,\|\cdot\|)$ be a Banach space. For a $\|\cdot\|$-strongly continuous semigroup $(T(t))_{t\geq 0}$ on $X$ 
we have $X^{\bullet}=X^{\odot}$, $(T^{\bullet}(t))_{t\geq 0}=(T^{\odot}(t))_{t\geq 0}$ and $A^{\bullet}=A^{\odot}$.
\end{rem} 

Let us turn to a generalisation of \cite[Theorem 1.3.5, p.~7]{vanneerven1992} 
(see also \cite[Theorem 14.2.1, p.~422--423]{hille1996}).

\begin{thm}\label{thm:eqiv_norms}
Let $(X,\|\cdot\|,\tau)$ be a sequentially complete d-consistent Mazur--Saks space 
and $(T(t))_{t\geq 0}$ a bi-continuous semigroup on $X$. We set  
\[
\|x\|^{\bullet}\coloneqq\sup_{\substack{x^{\bullet}\in X^{\bullet}\\ \|x^{\bullet}\|_{X'}\leq 1}}
                 |\langle x^{\bullet},x\rangle|,\quad x\in X.
\]
Then $\|\cdot\|^{\bullet}$ and $\|\cdot\|$ are equivalent norms on $X$.
\end{thm}
\begin{proof}
It follows from the definition that $\|x\|^{\bullet}\leq \|x\|$ for all $x\in X$. For the converse estimate 
let $\varepsilon>0$ and $x\in X$. We choose $M\geq 0$ such that 
$\sup_{t\in[0,\delta)}\|T(t)\|_{\mathcal{L}(X)}\leq M$ for 
some $\delta>0$ by the exponential boundedness of $(T(t))_{t\geq 0}$. 
Let $\mathcal{P}_{\gamma}$ be a directed system of seminorms that generates the mixed topology 
$\gamma=\gamma(\|\cdot\|,\tau)$. 
For $p_{\gamma}\in \mathcal{P}_{\gamma}$ there is $x^{\circ}\in X^{\circ}=(X,\gamma)'$ such that 
$\langle x^{\circ},x\rangle=p_{\gamma}(x)$ 
and $|\langle x^{\circ},z\rangle|\leq p_{\gamma}(z)$ for all $z\in X$ by \prettyref{rem:bi_cont_dual}
and the Hahn--Banach theorem. For any $x^{\circ}\in (X,\gamma)'$ and $t>0$ we observe that the map $s\mapsto T(s)x$
is $\gamma$-Pettis integrable on $[0,t]$ by \prettyref{thm:generator} (d) and 
\begin{equation}\label{eq:eqiv_norms}
     \Bigl|\langle x^{\circ}, \frac{1}{t}\int_{0}^{t}T(s)x\d s-x\rangle\Bigr|
\leq \frac{1}{t}\int_{0}^{t}|\langle x^{\circ}, T(s)x-x\rangle|\d s
\leq \sup_{s\in [0,t]}|\langle x^{\circ},T(s)x-x\rangle|,
\end{equation}
which yields 
\[
p_{\gamma}\Bigl(\frac{1}{t}\int_{0}^{t}T(s)x\d s-x\Bigr)\leq \sup_{s\in [0,t]}p_{\gamma}(T(s)x-x)
\]
for any $p_{\gamma}\in P_{\gamma}$ by \cite[Proposition 22.14, p.~256]{meisevogt1997}. Hence it follows from 
the $\gamma$-strong continuity of $(T(t))_{t\geq 0}$ that 
$\gamma$-$\lim_{t\to 0\rlim}\frac{1}{t}\int_{0}^{t}T(s)x\d s-x=0$. 
Thus for $p_{\gamma}$ there is some $0<t_{0}<\delta$ such that 
$p_{\gamma}(\frac{1}{t_{0}}\int_{0}^{t_{0}}T(s)x\d s-x)\leq \varepsilon p_{\gamma}(x)$. We deduce that
\begin{align*}
  \Bigl|\langle \frac{1}{t_{0}}\int_{0}^{t_{0}}T^{\circ}(s)x^{\circ}\d s,x\rangle\Bigr|
&=\Bigl|\langle x^{\circ}, \frac{1}{t_{0}}\int_{0}^{t_{0}}T(s)x\d s\rangle\Bigr|
 \geq |\langle x^{\circ},x\rangle|-\Bigl|\langle x^{\circ}, \frac{1}{t_{0}}\int_{0}^{t_{0}}T(s)x\d s-x\rangle\Bigr|\\
&\geq p_{\gamma}(x)- p_{\gamma}\Bigl(\frac{1}{t_{0}}\int_{0}^{t_{0}}T(s)x\d s-x\Bigr)
 \geq (1-\varepsilon) p_{\gamma}(x).
\end{align*}
We have $\frac{1}{t_{0}}\int_{0}^{t_{0}}T^{\circ}(s)x^{\circ}\d s\in D(A^{\circ})\subset X^{\bullet}$ 
by \prettyref{thm:generator} (d), \prettyref{prop:adj_bi_cont} (b) and \prettyref{cor:bi_sun_dual} and we note 
that $\|\frac{1}{t_{0}}\int_{0}^{t_{0}}T^{\circ}(s)x^{\circ}\d s\|_{X'}\leq M$, which implies that
$\|x\|^{\bullet}\geq M^{-1}(1-\varepsilon) p_{\gamma}(x)$. As $\varepsilon>0$ is arbitrary, we obtain
\[
\|x\|^{\bullet}\geq M^{-1} p_{\gamma}(x).
\]
By \cite[Lemma 5.5 (a), p.~2680]{kruse_meichnser_seifert2018} and 
\cite[Remark 2.3 (c), p.~3]{kruse_schwenninger2022} 
we may choose $\mathcal{P}_{\gamma}$ such that $\|x\|=\sup_{p_{\gamma}\in\mathcal{P}_{\gamma}}p_{\gamma}(x)$ 
for all $x\in X$ and hence we get
\[
 \|x\|^{\bullet}\geq M^{-1}\sup_{p_{\gamma}\in\mathcal{P}_{\gamma}}p_{\gamma}(x)=M^{-1}\|x\|.
\]
\end{proof}

The proof shows that we actually have $\|x\|^{\bullet}\leq \|x\|\leq M\|x\|^{\bullet}$ for all $x\in X$ with 
$M\coloneqq\limsup_{t\to 0\rlim}\|T(t)\|_{\mathcal{L}(X)}$. 

\begin{rem}\label{rem:bi_sun_sun}
Let $(X,\|\cdot\|,\tau)$ be a sequentially complete d-consistent Saks space,  
$(T(t))_{t\geq 0}$ a bi-continuous semigroup on $X$ with generator $(A,D(A))$ and 
set ${X^{\bullet}}'\coloneqq(X^{\bullet})'$, ${T^{\bullet}}'(t)\coloneqq(T^{\bullet})'(t)$ for $t\geq 0$ and 
${A^{\bullet}}'\coloneqq(A^{\bullet})'$. 
Then $({T^{\bullet}}'(t))_{t\geq 0}$ is a bi-continuous semigroup 
on $({X^{\bullet}}',\|\cdot\|_{{X^{\bullet}}'},\sigma({X^{\bullet}}',X^{\bullet}))$ by \cite[Proposition 3.18, p.~78]{kuehnemund2001} and \prettyref{cor:bi_sun_dual} and we define 
the \emph{bi-sun-sun dual}
\[
X^{\bullet\bullet}\coloneqq\{{x^{\bullet}}'\in {X^{\bullet}}'\;|\;
\lim_{t\to 0\rlim}\|{T^{\bullet}}'(t){x^{\bullet}}'-{x^{\bullet}}'\|_{{X^{\bullet}}'}=0\}.
\]
Then $X^{\bullet\bullet}$ is a $\|\cdot\|_{{X^{\bullet}}'}$-closed, sequentially 
$\tau_{\operatorname{c}}({X^{\bullet}}',X^{\bullet})$-dense, 
$({T^{\bullet}}'(t))_{t\geq 0}$-invariant subspace 
of ${X^{\bullet}}'$. Moreover, $X^{\bullet\bullet}=\overline{D({A^{\bullet}}')}^{\|\cdot\|_{{X^{\bullet}}'}}$ and 
$(T^{\bullet\bullet}(t))_{t\geq 0}\coloneqq({T^{\bullet}}'(t)_{\mid X^{\bullet\bullet}})_{t\geq 0}$ is the 
$\|\cdot\|_{{X^{\bullet}}'}$-strongly continuous semigroup on $X^{\bullet\bullet}$ generated by the part 
$A^{\bullet\bullet}$ of ${A^{\bullet}}'$ in $X^{\bullet\bullet}$ 
by \prettyref{thm:generator} (g), \prettyref{cor:bi_sun_dual} and since
\[
 \gamma(\|\cdot\|_{{X^{\bullet}}'},\sigma({X^{\bullet}}',X^{\bullet}))
=\tau_{\operatorname{c}}({X^{\bullet}}',X^{\bullet})
\]
by \cite[Example E), p.~66]{wiweger1961}.
If $\tau$ coincides with the $\|\cdot\|$-topology, then $X^{\bullet\bullet}=X^{\odot\odot}$, which is the 
\emph{sun-sun dual} (see \cite[p.~7]{vanneerven1992}).
\end{rem}

Let us comment on the definition of $X^{\bullet\bullet}$ and its relation to 
$X^{\bullet\circ}\coloneqq(X^{\bullet})^{\circ}$ and $(X^{\bullet})^{\bullet}$.

\begin{rem}
Let $(X,\|\cdot\|,\tau)$ be a sequentially complete d-consistent Saks space  
and $(T(t))_{t\geq 0}$ a bi-continuous semigroup on $X$. 
The triple $(X^{\bullet},\|\cdot\|_{X^{\bullet}},\sigma(X^{\bullet},{X^{\bullet}}'))$ is a Saks space, 
where $\|\cdot\|_{X^{\bullet}}$ denotes the restriction of $\|\cdot\|_{X'}$ to $X^{\bullet}$,
and we have 
\[
X^{\bullet\circ}=\{{x^{\bullet}}'\in {X^{\bullet}}'\;|\;{x^{\bullet}}'\;\sigma(X^{\bullet},{X^{\bullet}}')\text{-sequentially continuous on } \|\cdot\|_{X^{\bullet}}\text{-bounded sets}\}.
\]
For ${x^{\bullet}}'\in {X^{\bullet}}'$ we note that 
\[
|\langle{x^{\bullet}}',x^{\bullet}\rangle|=\sup_{y\in\{{x^{\bullet}}'\}}|\langle y,x^{\bullet}\rangle|
=:p_{\{{x^{\bullet}}'\}}(x^{\bullet})
\]
for all $x^{\bullet}\in X^{\bullet}$, which means that ${x^{\bullet}}'$ is $\sigma(X^{\bullet},{X^{\bullet}}')$-continuous. 
This implies $X^{\bullet\circ}={X^{\bullet}}'$. Setting 
\[
T^{\bullet\circ}(t)\coloneqq{T^{\bullet}}'(t)_{\mid X^{\bullet\circ}}={T^{\bullet}}'(t)_{\mid {X^{\bullet}}'}={T^{\bullet}}'(t)
\]
for all $t\geq 0$, we see that
\[
(X^{\bullet})^{\bullet}=\{x^{\bullet\circ}\in X^{\bullet\circ}\;|\;\lim_{t\to 0\rlim}\|T^{\bullet\circ}(t)x^{\bullet\circ}-x^{\bullet\circ}\|_{X^{\bullet\circ}}=0\}=X^{\bullet\bullet}.
\]
\end{rem}

Like in \cite[Corollary 1.3.6, p.~8]{vanneerven1992} we can consider $X$ as a subspace of ${X^{\bullet}}'$.

\begin{cor}\label{cor:embed_bi_sun_dual}
Let $(X,\|\cdot\|,\tau)$ be a sequentially complete d-consistent Mazur--Saks space 
and $(T(t))_{t\geq 0}$ a bi-continuous semigroup on $X$. 
Then the canonical map $j\colon X\to {X^{\bullet}}'$ given by
\[
\langle j(x), x^{\bullet}\rangle\coloneqq\langle x^{\bullet},x\rangle,\quad x\in X,\,x^{\bullet}\in X^{\bullet},
\]
is injective, $j(X_{\operatorname{cont}})= X^{\bullet\bullet}\cap j(X)$ and
$j\in\mathcal{L}(X;{X^{\bullet}}')
\coloneqq\mathcal{L}((X,\|\cdot\|);({X^{\bullet}}',\|\cdot\|_{{X^{\bullet}}'}))$ 
with $M^{-1}\leq\|j\|_{\mathcal{L}(X;{X^{\bullet}}')}\leq 1$ where 
$M\coloneqq\limsup_{t\to 0\rlim}\|T(t)\|_{\mathcal{L}(X)}$. 
\end{cor}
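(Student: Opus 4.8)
The plan is to verify the three claimed properties of $j$ one at a time, all of which should follow from \prettyref{thm:eqiv_norms} together with the general theory of strongly continuous semigroups applied to $(T^{\bullet}(t))_{t\ge0}$. First I would check that $j$ is well-defined and bounded: for $x\in X$, the functional $j(x)\colon x^{\bullet}\mapsto\langle x^{\bullet},x\rangle$ is clearly linear on $X^{\bullet}$, and $|\langle j(x),x^{\bullet}\rangle|\le\|x^{\bullet}\|_{X'}\|x\|$, so $j(x)\in{X^{\bullet}}'$ with $\|j(x)\|_{{X^{\bullet}}'}\le\|x\|$, giving $\|j\|_{\mathcal{L}(X;{X^{\bullet}}')}\le1$. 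For the lower bound and injectivity, observe that by the very definition of the norm $\|\cdot\|^{\bullet}$ we have $\|j(x)\|_{{X^{\bullet}}'}=\sup_{x^{\bullet}\in X^{\bullet},\,\|x^{\bullet}\|_{X'}\le1}|\langle x^{\bullet},x\rangle|=\|x\|^{\bullet}$, and by \prettyref{thm:eqiv_norms} (more precisely the sharpened estimate in the remark following it) $\|x\|^{\bullet}\ge M^{-1}\|x\|$ with $M=\limsup_{t\to0\rlim}\|T(t)\|_{\mathcal{L}(X)}$. Hence $\|j(x)\|_{{X^{\bullet}}'}\ge M^{-1}\|x\|$, which simultaneously yields the stated two-sided operator-norm bound and the injectivity of $j$.

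Next I would turn to the identity $j(X_{\operatorname{cont}})=X^{\bullet\bullet}\cap j(X)$, which is the substantive part. The inclusion $j(X_{\operatorname{cont}})\subset X^{\bullet\bullet}\cap j(X)$ should go as follows: for $x\in X_{\operatorname{cont}}$ we must show $\lim_{t\to0\rlim}\|{T^{\bullet}}'(t)j(x)-j(x)\|_{{X^{\bullet}}'}=0$. Testing against $x^{\bullet}\in X^{\bullet}$ with $\|x^{\bullet}\|_{X'}\le1$ and using that ${T^{\bullet}}'(t)$ is the dual of $T^{\bullet}(t)=T^{\circ}(t)|_{X^{\bullet}}=T'(t)|_{X^{\bullet}}$, one computes $\langle {T^{\bullet}}'(t)j(x)-j(x),x^{\bullet}\rangle=\langle x^{\bullet},T(t)x-x\rangle$, whence $\|{T^{\bullet}}'(t)j(x)-j(x)\|_{{X^{\bullet}}'}=\sup_{\|x^{\bullet}\|_{X'}\le1,\,x^{\bullet}\in X^{\bullet}}|\langle x^{\bullet},T(t)x-x\rangle|\le\|T(t)x-x\|^{\bullet}\le\|T(t)x-x\|\to0$ since $x\in X_{\operatorname{cont}}$. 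So $j(x)\in X^{\bullet\bullet}$, and trivially $j(x)\in j(X)$. Conversely, suppose $j(x)\in X^{\bullet\bullet}$ for some $x\in X$; then by the same computation $\|T(t)x-x\|^{\bullet}=\|{T^{\bullet}}'(t)j(x)-j(x)\|_{{X^{\bullet}}'}\to0$, and since $\|\cdot\|^{\bullet}$ is equivalent to $\|\cdot\|$ by \prettyref{thm:eqiv_norms}, this forces $\|T(t)x-x\|\to0$, i.e.\ $x\in X_{\operatorname{cont}}$, so $j(x)\in j(X_{\operatorname{cont}})$.

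I expect the main obstacle to be purely bookkeeping rather than conceptual: one has to be careful that ${T^{\bullet}}'(t)$ really is the adjoint on all of ${X^{\bullet}}'$ of the restricted operator $T^{\bullet}(t)$ acting on the Banach space $(X^{\bullet},\|\cdot\|_{X^{\bullet}})$, so that the pairing identity $\langle{T^{\bullet}}'(t)\xi,x^{\bullet}\rangle=\langle\xi,T^{\bullet}(t)x^{\bullet}\rangle$ holds for every $\xi\in{X^{\bullet}}'$; this is immediate from \prettyref{cor:bi_sun_dual}, which identifies $(T^{\bullet}(t))_{t\ge0}$ as a genuine $\|\cdot\|_{X^{\bullet}}$-strongly continuous semigroup with a well-defined dual semigroup. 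The only other point requiring a line of justification is that the supremum defining $\|T(t)x-x\|^{\bullet}$ is bounded above by $\|T(t)x-x\|$, which is the trivial half of \prettyref{thm:eqiv_norms}. Assembling these observations gives all three assertions of the corollary.
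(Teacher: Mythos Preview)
Your proposal is correct and follows essentially the same route as the paper's proof: both identify $\|j(x)\|_{{X^{\bullet}}'}=\|x\|^{\bullet}$, invoke the norm equivalence from \prettyref{thm:eqiv_norms} (with the sharpened constant $M=\limsup_{t\to0\rlim}\|T(t)\|_{\mathcal{L}(X)}$) to obtain injectivity and the two-sided operator-norm bound, and use the pairing computation $\langle {T^{\bullet}}'(t)j(x)-j(x),x^{\bullet}\rangle=\langle x^{\bullet},T(t)x-x\rangle$ together with the norm equivalence for both inclusions in $j(X_{\operatorname{cont}})=X^{\bullet\bullet}\cap j(X)$. One cosmetic point: in your forward inclusion the displayed supremum is \emph{equal} to $\|T(t)x-x\|^{\bullet}$ by definition, not merely $\le$; you use this equality correctly in the converse direction.
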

\begin{proof}
$j$ is clearly linear. If $j(x)=0$ for some $x\in X$, 
then $\langle x^{\bullet},x\rangle=0$ for all $x^{\bullet}\in X^{\bullet}$, 
which implies that $\|x\|^{\bullet}=0$ and thus $x=0$ by \prettyref{thm:eqiv_norms}. 

The inclusion $j(X_{\operatorname{cont}})\subset (X^{\bullet\bullet}\cap j(X))$ follows 
directly from the definitions of $X_{\operatorname{cont}}$ (see \prettyref{thm:generator} (g)) and 
$X^{\bullet\bullet}$. For the converse inclusion let $x\in X$ with $j(x)\in X^{\bullet\bullet}$. 
We note that for any $t\geq 0$ and $x^{\bullet}\in X^{\bullet}$
\begin{align*}
 \langle T^{\bullet\bullet}(t)j(x)-j(x),x^{\bullet}\rangle 
&=\langle {T^{\bullet}}'(t)j(x)-j(x),x^{\bullet}\rangle 
 =\langle j(x),T^{\bullet}(t)x^{\bullet}-x^{\bullet}\rangle \\
&=\langle T'(t)x^{\bullet}-x^{\bullet},x\rangle 
 =\langle x^{\bullet},T(t)x-x\rangle ,
% =\langle j(T(t)x-x),x^{\bullet}\rangle 
\end{align*}
which implies $\|T(t)x-x\|^{\bullet}=\|T^{\bullet\bullet}(t)j(x)-j(x)\|_{{X^{\bullet}}'}$ and thus 
$x\in X_{\operatorname{cont}}$ as $\|\cdot\|^{\bullet}$ and $\|\cdot\|$ are equivalent 
by \prettyref{thm:eqiv_norms}.

Furthermore, we have 
\[
 \|j\|_{\mathcal{L}(X;{X^{\bullet}}')}
=\sup_{\substack{x\in X\\ \|x\|\leq 1}}\|j(x)\|_{{X^{\bullet}}'}
=\sup_{\substack{x\in X\\ \|x\|\leq 1}}
  \sup_{\substack{x^{\bullet}\in X^{\bullet}\\ \|x^{\bullet}\|_{X'}\leq 1}}
  |\langle x^{\bullet},x\rangle|
=\sup_{\substack{x\in X\\ \|x\|\leq 1}}\|x\|^{\bullet},
\]
implying the rest of our statement because $\|x\|^{\bullet}\leq \|x\|\leq M\|x\|^{\bullet}$ for all $x\in X$ 
with $M\coloneqq\limsup_{t\to 0\rlim}\|T(t)\|_{\mathcal{L}(X)}$.
\end{proof}

In our next theorem we investigate the relation between the resolvent sets $\rho(A)$, $\rho(A^{\bullet})$ 
and $\rho({A^{\bullet}}')$
resp.~the resolvents $R(\lambda,A)$, $R(\lambda,A^{\bullet})$ and $R(\lambda,{A^{\bullet}}')$.

\begin{thm}\label{thm:resolv_bullet}
Let $(X,\|\cdot\|,\tau)$ be a sequentially complete d-consistent Saks space 
and $(T(t))_{t\geq 0}$ a bi-continuous semigroup on $X$ with generator $(A,D(A))$. 
For $\lambda\in\rho(A)$ we set $R(\lambda,A)^{\bullet}\coloneqq R(\lambda,A)'_{\mid X^{\bullet}}$.
\begin{enumerate}
\item[(a)] If $\lambda\in\rho(A)$ such that $R(\lambda,A)^{\bullet}X^{\bullet}\subset D(A^{\bullet})$, 
then we have $\lambda\in \rho(A^{\bullet})$ and $R(\lambda,A)^{\bullet}=R(\lambda,A^{\bullet})$.
\item[(b)] If $(X,\gamma)$ is a Mazur space and $\lambda\in\rho(A^{\bullet})$, then we have $\lambda\in \rho(A)$.
\item[(c)] We have $\rho(A^{\bullet})=\rho({A^{\bullet}}')$ and 
$R(\lambda,A^{\bullet})'=R(\lambda,{A^{\bullet}}')$ for all $\lambda\in\rho(A^{\bullet})$. 
If $\lambda\in\rho(A)$ such that $R(\lambda,A)^{\bullet}X^{\bullet}\subset D(A^{\bullet})$, 
then we have $j(R(\lambda,A)x)=R(\lambda,{A^{\bullet}}')j(x)$ for all $X$ with the canonical map 
$j\colon X\to {X^{\bullet}}'$.
\end{enumerate}
\end{thm}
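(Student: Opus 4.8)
The plan is to dispatch (a) and the second identity in (c) by formal manipulations with the adjoint relations, and to treat (b) as the substantial part. For~(a), put $y^{\bullet}:=R(\lambda,A)^{\bullet}x^{\bullet}=R(\lambda,A)'x^{\bullet}$ for $x^{\bullet}\in X^{\bullet}$; by hypothesis $y^{\bullet}\in D(A^{\bullet})$. For $x\in D(A)$, using $R(\lambda,A)(\lambda-A)x=x$,
\[
\langle Ax,y^{\bullet}\rangle=\langle R(\lambda,A)Ax,x^{\bullet}\rangle=\langle\lambda R(\lambda,A)x-x,x^{\bullet}\rangle=\langle x,\lambda y^{\bullet}-x^{\bullet}\rangle ,
\]
so by the description of $A^{\circ}$ in \prettyref{prop:adj_bi_cont}(b) and $A^{\bullet}=A^{\circ}{}_{\mid X^{\bullet}}$ (\prettyref{cor:bi_sun_dual}) we get $(\lambda-A^{\bullet})y^{\bullet}=x^{\bullet}$. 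Hence $\lambda-A^{\bullet}\colon D(A^{\bullet})\to X^{\bullet}$ is onto with bounded right inverse $R(\lambda,A)^{\bullet}$; it is injective because $(\lambda-A^{\bullet})z^{\bullet}=0$ forces $\langle(\lambda-A)x,z^{\bullet}\rangle=0$ for all $x\in D(A)$, whence $z^{\bullet}=0$ by $(\lambda-A)D(A)=X$. Thus $\lambda\in\rho(A^{\bullet})$ and $R(\lambda,A^{\bullet})=R(\lambda,A)^{\bullet}$.

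For~(b), note first that $A^{\circ}$ is a Hille--Yosida operator on $X^{\circ}$ (\prettyref{prop:adj_bi_cont}(b), \prettyref{thm:generator}(f)) and, by \prettyref{cor:bi_sun_dual}, $A^{\bullet}$ is the part of $A^{\circ}$ in $X^{\bullet}=\overline{D(A^{\circ})}^{\|\cdot\|_{X^{\circ}}}$; the classical fact that the part of an operator with nonempty resolvent set in the closure of its domain has the same resolvent set (lift $\lambda$ from $X^{\bullet}$ to $X^{\circ}$ via a large real $\mu\in\rho(A^{\circ})$, using $\mu R(\mu,A^{\circ})\to\id$ on $X^{\bullet}$) gives $\rho(A^{\bullet})=\rho(A^{\circ})$, so it suffices to prove $\rho(A^{\circ})\subseteq\rho(A)$. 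Fix $\lambda\in\rho(A^{\circ})$. Since $R(\lambda,A^{\circ})$ is $\gamma(\|\cdot\|_{X^{\circ}},\sigma(X^{\circ},X))$-continuous (see the next paragraph) and $(X^{\circ},\gamma(\|\cdot\|_{X^{\circ}},\sigma(X^{\circ},X)))'$ may be identified with $X$, it has a transpose $P:=R(\lambda,A^{\circ})^{\times}\in\mathcal{L}(X)$ for the dual pair $(X^{\circ},X)$. Testing against $x^{\circ}\in D(A^{\circ})$ gives $\langle A^{\circ}x^{\circ},Px\rangle=\langle x^{\circ},\lambda Px-x\rangle$ for all $x\in X$, i.e.\ $Px$ lies in the domain of the $(X^{\circ},X)$-adjoint $(A^{\circ})^{\times}$ of $A^{\circ}$ with $(A^{\circ})^{\times}Px=\lambda Px-x$. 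Now $(A^{\circ})^{\times}=A$: the inclusion $A\subseteq(A^{\circ})^{\times}$ is immediate, and applying the defining relation of $(A^{\circ})^{\times}$ to $x^{\circ}=R(\mu,A^{\circ})y^{\circ}$ for large $\mu$, using $R(\mu,A^{\circ})^{\times}=R(\mu,A)$ (clear from the Laplace-transform representations of both resolvents) and that $X^{\circ}$ separates the points of $X$, shows $v=R(\mu,A)\bigl(\mu v-(A^{\circ})^{\times}v\bigr)\in D(A)$ for every $v$ in that domain. Hence $Px\in D(A)$ and $(\lambda-A)Px=x$ for all $x\in X$, so $\lambda-A$ is onto; it is also injective, since $(\lambda-A)x=0$ forces $\langle x,(\lambda-A^{\circ})x^{\circ}\rangle=0$ for all $x^{\circ}\in D(A^{\circ})$, hence $\langle x,x^{\circ}\rangle=0$ for all $x^{\circ}\in X^{\circ}$, whence $x=0$. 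Therefore $\lambda\in\rho(A)$ and $R(\lambda,A)=P$.

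The crux of~(b) is the $\gamma(\|\cdot\|_{X^{\circ}},\sigma(X^{\circ},X))$-continuity of $R(\lambda,A^{\circ})$ for an \emph{arbitrary} $\lambda\in\rho(A^{\circ})$. For $\re\lambda$ large it follows from $R(\lambda,A^{\circ})x^{\circ}=\int_{0}^{\infty}e^{-\lambda s}T^{\circ}(s)x^{\circ}\,\d s$, but for general $\lambda$ the resolvent identity only reduces the claim to invertibility in $\mathcal{L}(X)$ of $\id-(\mu-\lambda)R(\mu,A)$, which is tautologically equivalent to $\lambda\in\rho(A)$. To break this circle I would argue, using the Mazur hypothesis, that $\id-(\mu-\lambda)R(\mu,A)$ — whose transpose $\id-(\mu-\lambda)R(\mu,A^{\circ})$ is invertible since $\lambda\in\rho(A^{\circ})$ — is bounded below on $X$ (from $\|\cdot\|^{\bullet}\simeq\|\cdot\|$, \prettyref{thm:eqiv_norms}, together with $\lambda\in\rho(A^{\bullet})$), hence has $\|\cdot\|$-closed range, and that this range is $\sigma(X,X^{\circ})$-dense — equivalently $\gamma$-dense, by \prettyref{rem:bi_cont_dual} and the Mazur identification $X^{\circ}=(X,\gamma)'$ — from the description of $A^{\circ}$; the remaining step, upgrading $\gamma$-density of this $\|\cdot\|$-closed, convex, sequentially $\gamma$-closed set to $\|\cdot\|$-density via the mixed-topology characterisation of $\gamma$-closed convex sets, is the one I expect to be the main technical obstacle.

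For~(c), $\rho(A^{\bullet})=\rho({A^{\bullet}}')$ and $R(\lambda,A^{\bullet})'=R(\lambda,{A^{\bullet}}')$ for $\lambda\in\rho(A^{\bullet})$ are the standard facts on adjoints of closed densely defined operators on Banach spaces applied to the $C_{0}$-generator $A^{\bullet}$ on $X^{\bullet}$ (\prettyref{cor:bi_sun_dual}). For the remaining assertion, let $\lambda\in\rho(A)$ with $R(\lambda,A)^{\bullet}X^{\bullet}\subseteq D(A^{\bullet})$; by~(a), $\lambda\in\rho(A^{\bullet})$ and $R(\lambda,A^{\bullet})=R(\lambda,A)'{}_{\mid X^{\bullet}}$, so for $x\in X$ and $x^{\bullet}\in X^{\bullet}$,
\[
\langle R(\lambda,{A^{\bullet}}')j(x),x^{\bullet}\rangle=\langle j(x),R(\lambda,A^{\bullet})x^{\bullet}\rangle=\langle R(\lambda,A)'x^{\bullet},x\rangle=\langle x^{\bullet},R(\lambda,A)x\rangle=\langle j(R(\lambda,A)x),x^{\bullet}\rangle ,
\]
whence $j(R(\lambda,A)x)=R(\lambda,{A^{\bullet}}')j(x)$.
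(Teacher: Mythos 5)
Your treatments of (a) and (c) are correct and essentially coincide with the paper's: for (a) the paper verifies the two one-sided identities $R(\lambda,A)^{\bullet}(\lambda-A^{\bullet})=\id$ and $(\lambda-A^{\bullet})R(\lambda,A)^{\bullet}=\id$ by the same pairing computations that you package as surjectivity plus injectivity, and for (c) the paper likewise invokes the standard adjoint facts for the $C_{0}$-generator $A^{\bullet}$ and performs the identical chain of dualities for $j(R(\lambda,A)x)=R(\lambda,{A^{\bullet}}')j(x)$.

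Part (b) is where the proposal has a genuine gap, and you have located it yourself. Two points. First, the detour through $\rho(A^{\bullet})=\rho(A^{\circ})$ and the $(X^{\circ},X)$-transpose of $R(\lambda,A^{\circ})$ forces you to establish $\sigma(X^{\circ},X)$- (or $\gamma'$-) continuity of $R(\lambda,A^{\circ})$ for an \emph{arbitrary} $\lambda\in\rho(A^{\circ})$, which, as you concede, is only available for $\re\lambda$ large and is otherwise circular. Second, and more seriously, the proposed repair cannot work as stated: you want to conclude that the range of $\id-(\mu-\lambda)R(\mu,A)$ is all of $X$ from its being $\|\cdot\|$-closed and $\sigma(X,X^{\circ})$-dense. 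A proper $\|\cdot\|$-closed linear subspace can perfectly well be $\sigma(X,X^{\circ})$-dense when $X^{\circ}\subsetneq X'$ --- e.g.\ $c_{0}$ in $\ell^{\infty}$ with $X^{\circ}=\ell^{1}$, exactly the setting of \prettyref{ex:resolv_bullet} --- so no upgrade from weak density plus norm-closedness to surjectivity is possible; the bipolar theorem only returns the $\sigma(X,X^{\circ})$-closure. The paper avoids both issues by working directly with $\lambda-A$ and the norm $\|\cdot\|^{\bullet}$ of \prettyref{thm:eqiv_norms} (this is where the Mazur hypothesis actually enters): injectivity because a kernel element annihilates $(\lambda-A^{\bullet})D(A^{\bullet})=X^{\bullet}$ and hence has $\|x\|^{\bullet}=0$; the a priori bound $\|(\lambda-A)x\|^{\bullet}\geq\tfrac{1}{2}\|R(\lambda,A^{\bullet})\|_{\mathcal{L}(X^{\bullet})}^{-1}\|x\|^{\bullet}$, obtained by testing against $R(\lambda,A^{\bullet})x^{\bullet}$ for a near-norming $x^{\bullet}\in X^{\bullet}$, which yields closed range and boundedness of the inverse; and a density argument in which a nonzero annihilator of the range is promoted to an eigenvector of $A^{\bullet}$ at $\lambda$, contradicting $\lambda\in\rho(A^{\bullet})$. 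If you want to salvage your outline, that lower bound in $\|\cdot\|^{\bullet}$ is the missing ingredient, and density must be tested against $X^{\circ}$-functionals rather than deduced by a closed-plus-weakly-dense shortcut.
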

\begin{proof}
(a) Let $\lambda\in\rho(A)$. For any $x\in X$ and $x^{\bullet}\in D(A^{\bullet})$ we have 
$A^{\bullet}x^{\bullet}=A^{\circ}x^{\bullet}\in X^{\bullet}$ since 
$A^{\bullet}$ is the part of $A^{\circ}$ in $X^{\bullet}$ by \prettyref{cor:bi_sun_dual}, and 
\[
 \langle R(\lambda,A)^{\bullet}(\lambda-A^{\bullet})x^{\bullet},x\rangle
=\langle R(\lambda,A)'(\lambda-A^{\circ})x^{\bullet},x\rangle
=\langle x^{\bullet},(\lambda-A)R(\lambda,A)x\rangle
=\langle x^{\bullet},x\rangle,
\] 
which implies $R(\lambda,A)^{\bullet}(\lambda-A^{\bullet})x^{\bullet}=x^{\bullet}$. 
From the assumption $R(\lambda,A)^{\bullet}X^{\bullet}\subset D(A^{\bullet})$ and \prettyref{cor:bi_sun_dual} 
we deduce that $(\lambda-A^{\bullet})R(\lambda,A)^{\bullet}x^{\bullet}\in X^{\bullet}$ and 
for any $x\in D(A)$ we have 
\[
 \langle (\lambda-A^{\bullet})R(\lambda,A)^{\bullet}x^{\bullet},x\rangle
=\langle (\lambda-A^{\circ})R(\lambda,A)'x^{\bullet},x\rangle
=\langle x^{\bullet},R(\lambda,A)(\lambda-A)x\rangle
=\langle x^{\bullet},x\rangle.
\] 
As  $(\lambda-A^{\bullet})R(\lambda,A)^{\bullet}x^{\bullet},x^{\bullet}\in X^{\bullet}\subset X^{\circ}$ 
and $D(A)$ is sequentially $\gamma$-dense by \prettyref{thm:generator} (b), 
we get $(\lambda-A^{\bullet})R(\lambda,A)^{\bullet}x^{\bullet}=x^{\bullet}$ from the definition of $X^{\circ}$. 
Hence we obtain $\lambda\in\rho(A^{\bullet})$ and $R(\lambda,A)^{\bullet}=R(\lambda,A^{\bullet})$.

(b) Conversely, let $\lambda\in\rho(A^{\bullet})$. If $(\lambda-A)x=0$ for some $x\in D(A)$, then for all 
$x^{\circ}\in D(A^{\circ})$ we have 
\[
 \langle (\lambda-A^{\circ})x^{\circ},x\rangle
=\langle x^{\circ},(\lambda-A)x\rangle=0,
\] 
which means that $x$ annihilates the range of $\lambda-A^{\circ}$. 
In particular, $x$ annihilates $(\lambda-A^{\bullet})D(A^{\bullet})=X^{\bullet}$ by \prettyref{cor:bi_sun_dual} 
because $\lambda\in\rho(A^{\bullet})$. Thus we have $\|x\|^{\bullet}=0$ and so $x=0$ 
by \prettyref{thm:eqiv_norms}, implying the injectivity of $\lambda-A$. 

Next, we show that the range of $\lambda-A$ is $\|\cdot\|$-dense and $\|\cdot\|$-closed, which 
then implies the surjectivity of $\lambda-A$.
Suppose that the range of $\lambda-A$ is not $\|\cdot\|$-dense. 
Then there is some $x^{\circ}\in X^{\circ}$ with $x^{\circ}\neq 0$ such that for any $x\in D(A)$ we have
\[
\langle (\lambda-A)x, x^{\circ}\rangle =0
\]
since $ X^{\circ}$ separates the points of $X$.
It follows that $\langle Ax, x^{\circ}\rangle=\langle x,\lambda x^{\circ}\rangle$ for all $x\in D(A)$ 
and so $x^{\circ}\in D(A^{\circ})$ by \prettyref{prop:adj_bi_cont} (b). We deduce that 
$(\lambda-A^{\circ})x^{\circ}=0$ as $D(A)$ is sequentially $\gamma$-dense by \prettyref{thm:generator} (b), which yields 
$A^{\circ}x^{\circ}=\lambda x^{\circ}\in D(A^{\circ})\subset X^{\bullet}$. 
Thus $x^{\circ}\in D(A^{\bullet})$ because $A^{\bullet}$ is the part of $A^{\circ}$ in $X^{\bullet}$ 
by \prettyref{cor:bi_sun_dual}. We conclude that 
\[
(\lambda-A^{\bullet})x^{\circ}=(\lambda-A^{\circ})x^{\circ}=0
\]
with $x^{\circ}\neq 0$, which contradicts $\lambda\in\rho(A^{\bullet})$. 

Let us turn to the $\|\cdot\|$-closedness of the range of $\lambda-A$. Let $x\in D(A)$. 
By \prettyref{thm:eqiv_norms} there is $x^{\bullet}\in X^{\bullet}$ with $\|x^{\bullet}\|_{X'}\leq 1$ such that 
$|\langle x^{\bullet},x \rangle|\geq \tfrac{1}{2} \|x\|^{\bullet}$ due to $(X,\gamma)$ being a Mazur space.
Setting $C\coloneqq\|R(\lambda,A^{\bullet})\|_{\mathcal{L}(X^{\bullet})}^{-1}$, we note that 
\begin{align}\label{eq:resolv_bullet_0}
     \|(\lambda-A)x\|^{\bullet}
&\geq C|\langle R(\lambda,A^{\bullet})x^{\bullet},(\lambda-A)x \rangle|
 = C|\langle (\lambda-A^{\bullet})R(\lambda,A^{\bullet})x^{\bullet},x \rangle|
 = C|\langle x^{\bullet},x \rangle|\nonumber\\
&\geq \frac{C}{2} \|x\|^{\bullet}.
\end{align}
Now, if $(x_{n})_{n\in\N}$ is a sequence in $D(A)$ such that 
$\|\cdot\|$-$\lim_{n\to\infty}(\lambda-A)x_{n}=y$ for some $y\in X$, 
we derive from the estimate above that $(x_{n})_{n\in\N}$ is a $\|\cdot\|$-Cauchy sequence, 
say with limit $z\in X$, because $\|\cdot\|$ and $\|\cdot\|^{\bullet}$ are equivalent norms on $X$ 
by \prettyref{thm:eqiv_norms}. Since $(A,D(A))$ is sequentially $\gamma$-closed by \prettyref{thm:generator} (a), in particular 
$\|\cdot\|$-closed as $\gamma$ is coarser than the $\|\cdot\|$-topology, we get $z\in D(A)$ and 
$y=(\lambda-A)z$. Hence $\lambda-A$ is bijective and \eqref{eq:resolv_bullet_0} yields that 
$(\lambda-A)^{-1}\in\mathcal{L}(X)$ as well. 

(c)  By \cite[Lemma 1.4.1, p.~9]{vanneerven1992} and \prettyref{cor:bi_sun_dual} it holds 
$\rho(A^{\bullet})=\rho({A^{\bullet}}')$ and $R(\lambda,A^{\bullet})'=R(\lambda,{A^{\bullet}}')$ for all 
$\lambda\in\rho(A^{\bullet})$. 
Now, let $\lambda\in\rho(A)$ such that $R(\lambda,A)^{\bullet}X^{\bullet}\subset D(A^{\bullet})$.
Then it follows from part (a) that $\lambda\in\rho(A^{\bullet})$ and 
$R(\lambda,A)^{\bullet}=R(\lambda,A^{\bullet})$. Thus we have 
\begin{align*}
 \langle j(R(\lambda,A)x),x^{\bullet}\rangle
&=\langle x^{\bullet},R(\lambda,A)x\rangle
 =\langle R(\lambda,A)^{\bullet}x^{\bullet},x\rangle
 =\langle R(\lambda,A^{\bullet})x^{\bullet},x\rangle\\
&=\langle j(x),R(\lambda,A^{\bullet})x^{\bullet}\rangle
 =\langle R(\lambda,A^{\bullet})'j(x),x^{\bullet}\rangle
 =\langle R(\lambda,{A^{\bullet}}')j(x),x^{\bullet}\rangle
\end{align*}
for all $x\in X$ and $x^{\bullet}\in X^{\bullet}$, meaning
$j(R(\lambda,A)x)=R(\lambda,{A^{\bullet}}')j(x)$ for all $x\in X$.
\end{proof}

Let us turn to sufficient conditions for $R(\lambda,A)^{\bullet}X^{\bullet}\subset D(A^{\bullet})$ 
to hold in \prettyref{thm:resolv_bullet} (a). 

\begin{prop}\label{prop:resolv_bullet}
Let $(X,\|\cdot\|,\tau)$ be a sequentially complete d-consistent Saks space 
and $(T(t))_{t\geq 0}$ a bi-continuous semigroup on $X$ with generator $(A,D(A))$. 
\begin{enumerate}
\item[(a)] If $\re\lambda>\omega_{0}(T)$, then we have $\lambda\in\rho(A)$ and 
$R(\lambda,A)^{\bullet}X^{\bullet}\subset D(A^{\bullet})$.
\item[(b)] If $(X,\gamma)$ is a Mazur space and $\lambda\in\rho(A)$ such that 
$R(\lambda,A)\colon (X,\gamma)\to(X,\gamma)$ is continuous, 
then we have $R(\lambda,A)^{\bullet}X^{\bullet}\subset D(A^{\bullet})$.
\item[(c)] If $(X,\gamma)$ is a C-sequential space, then 
$\{R(\lambda,A)\;|\;\re\lambda\geq\alpha\}$ is $\gamma$-equiconti\-nuous 
for all $\alpha>\omega_{0}(T)$. 
Especially, $R(\lambda,A)\colon (X,\gamma)\to(X,\gamma)$ is continuous for all $\re\lambda>\omega_{0}(T)$.
\end{enumerate}
\end{prop}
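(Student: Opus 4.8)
The plan is to deduce (a) and (b) from one structural observation about $A^{\circ}$ and $A^{\bullet}$, and then to prove (c) by a Banach--Steinhaus-type argument adapted to the mixed topology; the latter is where the real work is.

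We first record the observation used for (a) and (b). Suppose $\lambda\in\rho(A)$, $x^{\bullet}\in X^{\bullet}$ and $y^{\circ}\in X^{\circ}$ satisfy $\langle Ax,y^{\circ}\rangle=\langle x,\lambda y^{\circ}-x^{\bullet}\rangle$ for all $x\in D(A)$. Then $y^{\circ}\in D(A^{\bullet})$: by the description of $A^{\circ}$ in \prettyref{prop:adj_bi_cont} (b) we get $y^{\circ}\in D(A^{\circ})$ with $A^{\circ}y^{\circ}=\lambda y^{\circ}-x^{\bullet}$; since $D(A^{\circ})\subset\overline{D(A^{\circ})}^{\|\cdot\|_{X^{\circ}}}=X^{\bullet}$ by \prettyref{cor:bi_sun_dual}, both $y^{\circ}$ and $A^{\circ}y^{\circ}=\lambda y^{\circ}-x^{\bullet}$ lie in $X^{\bullet}$, and as $A^{\bullet}$ is the part of $A^{\circ}$ in $X^{\bullet}$ (\prettyref{cor:bi_sun_dual}), we conclude $y^{\circ}\in D(A^{\bullet})$.

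For (a): $\re\lambda>\omega_{0}(T)$ gives $\lambda\in\rho(A)$ by \prettyref{thm:generator} (e); since $\|T^{\circ}(t)\|_{\mathcal{L}(X^{\circ})}\le\|T(t)\|_{\mathcal{L}(X)}$ we have $\omega_{0}(T^{\circ})\le\omega_{0}(T)$, so $R(\lambda,A)'$ maps $X^{\circ}$ into itself with $R(\lambda,A)'_{\mid X^{\circ}}=R(\lambda,A^{\circ})$ (both equal $\int_{0}^{\infty}e^{-\lambda s}T^{\circ}(s)\cdot\,\d s$). For $x^{\bullet}\in X^{\bullet}$ set $y^{\circ}:=R(\lambda,A)'x^{\bullet}$; using $R(\lambda,A)Ax=\lambda R(\lambda,A)x-x$ one computes $\langle Ax,y^{\circ}\rangle=\langle R(\lambda,A)Ax,x^{\bullet}\rangle=\langle x,\lambda y^{\circ}-x^{\bullet}\rangle$ for $x\in D(A)$, so $R(\lambda,A)^{\bullet}x^{\bullet}=y^{\circ}\in D(A^{\bullet})$ by the observation. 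For (b): $(X,\gamma)$ being a Mazur space gives $(X,\gamma)'=X^{\circ}$ (\prettyref{rem:bi_cont_dual}), so $\gamma$-to-$\gamma$ continuity of $R(\lambda,A)$ yields $R(\lambda,A)'X^{\circ}\subset X^{\circ}$; the same computation (which only used $\lambda\in\rho(A)$) applies to $y^{\circ}:=R(\lambda,A)'x^{\bullet}\in X^{\circ}$ and gives $R(\lambda,A)^{\bullet}X^{\bullet}\subset D(A^{\bullet})$.

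For (c), fix $\alpha>\omega_{0}(T)$ and $\omega\in(\omega_{0}(T),\alpha)$; by \prettyref{thm:generator} (e), (f), $\lambda\in\rho(A)$ and $\|R(\lambda,A)\|_{\mathcal{L}(X)}\le M/(\alpha-\omega)$ for all $\re\lambda\ge\alpha$, so $\mathcal{F}:=\{R(\lambda,A):\re\lambda\ge\alpha\}$ is $\|\cdot\|$-bounded. For a $\gamma$-continuous seminorm $p$ (hence $p\le c\|\cdot\|$ for some $c$) put $q(x):=\sup_{\re\lambda\ge\alpha}p(R(\lambda,A)x)$, a seminorm that is finite everywhere. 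It suffices to show each such $q$ is $\gamma$-continuous, which is exactly the asserted $\gamma$-equicontinuity (and then each $R(\lambda,A)\colon(X,\gamma)\to(X,\gamma)$ is $\gamma$-continuous). The set $\{q<1\}$ is convex, so by C-sequentiality of $(X,\gamma)$ it is enough to show $q$ is $\gamma$-sequentially continuous, i.e.\ $q(x_{n})\to0$ whenever $x_{n}\to0$ in $\gamma$. So let $x_{n}\to0$ in $\gamma$, i.e.\ $(x_{n})$ is $\tau$-null and $\|\cdot\|$-bounded, say $\|x_{n}\|\le B$ (\prettyref{rem:mixed_top} (e)). We first note a local $\gamma$-version of bi-equicontinuity: for every $R>0$ and every $\gamma$-continuous seminorm $p$, $\sup_{s\in[0,R]}p(T(s)x_{n})\to0$; otherwise there are $\varepsilon>0$, $n_{k}\to\infty$ and $s_{k}\in[0,R]$ with $p(T(s_{k})x_{n_{k}})\ge\varepsilon$, but $(T(s_{k})x_{n_{k}})_{k}$ is $\|\cdot\|$-bounded and, by local bi-equicontinuity, $\tau$-null, hence $\gamma$-null by \prettyref{rem:mixed_top} (e), a contradiction. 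Now $R(\lambda,A)x_{n}=\int_{0}^{\infty}e^{-\lambda s}T(s)x_{n}\,\d s$ as a $\gamma$-Riemann integral (\prettyref{thm:generator} (e), \prettyref{prop:Riemann_Pettis} (b)); estimating $p$ of the integral by the integral of $p$ (via Riemann sums) and splitting at $R_{0}>0$ gives, uniformly in $\re\lambda\ge\alpha$,
\[
q(x_{n})\;\le\;\Big(\int_{0}^{R_{0}}e^{-\alpha s}\,\d s\Big)\sup_{s\in[0,R_{0}]}p(T(s)x_{n})\;+\;\frac{cMB}{\alpha-\omega}\,e^{-(\alpha-\omega)R_{0}}.
\]
Letting first $R_{0}\to\infty$ (the second term vanishes, uniformly in $n$) and then $n\to\infty$ (the first term vanishes by the local $\gamma$-bi-equicontinuity) shows $q(x_{n})\to0$, which completes (c).

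The main obstacle is this last part: $\|\cdot\|$-boundedness of an operator family is by itself far too weak for $\gamma$-equicontinuity, so one must feed in the local bi-equicontinuity of $(T(t))_{t\ge0}$ through the integral representation of the resolvent, and the passage from "$\gamma$-sequentially continuous seminorm" to "$\gamma$-continuous seminorm" is precisely where C-sequentiality enters; parts (a) and (b), by contrast, are bookkeeping around $A^{\circ}R(\lambda,A^{\circ})=\lambda R(\lambda,A^{\circ})-\id$ and the inclusion $D(A^{\circ})\subset X^{\bullet}$.
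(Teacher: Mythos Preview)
Your proof is correct, but the route differs from the paper's in all three parts.

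For (a) and (b) you isolate a single structural observation --- if $y^{\circ}\in X^{\circ}$ satisfies $\langle Ax,y^{\circ}\rangle=\langle x,\lambda y^{\circ}-x^{\bullet}\rangle$ on $D(A)$ then $y^{\circ}\in D(A^{\circ})\subset X^{\bullet}$ with $A^{\circ}y^{\circ}=\lambda y^{\circ}-x^{\bullet}\in X^{\bullet}$, hence $y^{\circ}\in D(A^{\bullet})$ --- and then both parts reduce to checking $R(\lambda,A)'x^{\bullet}\in X^{\circ}$, which is immediate from the Laplace representation in (a) and from $(X,\gamma)'=X^{\circ}$ plus $\gamma$-continuity in (b). The paper instead treats the two cases separately: in (a) it matches the Laplace integrals for $R(\lambda,A)^{\bullet}$ and $R(\lambda,A^{\bullet})$ directly; in (b) it first shows $R(\lambda,A)^{\bullet}D(A^{\circ})\subset D(A^{\bullet})$ via explicit seminorm estimates (even verifying the superfluous $A^{\circ}R(\lambda,A)^{\bullet}x^{\circ}\in D(A^{\circ})$) and then extends to $X^{\bullet}$ by a density/closedness argument. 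Your unified approach is shorter and avoids that density step entirely.

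For (c) the paper gives no argument of its own: it quotes Kraaij's results that a $\tau$-bi-continuous semigroup on a C-sequential $(X,\gamma)$ is an SCLE semigroup, and reads off resolvent equicontinuity from that theory. Your self-contained Banach--Steinhaus-type proof via the Laplace representation and local bi-equicontinuity is therefore a genuine alternative. One minor slip in the final line: ``letting first $R_{0}\to\infty$ and then $n\to\infty$'' is the wrong order, since for fixed $n$ the first summand $(\int_{0}^{R_{0}}e^{-\alpha s}\d s)\sup_{s\in[0,R_{0}]}p(T(s)x_{n})$ need not stay bounded as $R_{0}\to\infty$ (think of $\alpha\le 0$, or simply of $\sup_{s\le R_{0}}p(T(s)x_{n})$ growing). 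The standard $\varepsilon/2$ order works: given $\varepsilon>0$, choose $R_{0}$ so the tail term is below $\varepsilon/2$ (uniformly in $n$ and $\lambda$), then use your local $\gamma$-bi-equicontinuity to push the first term below $\varepsilon/2$ for large $n$. With this rephrased, (c) is complete.
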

\begin{proof}
(a) Let $\re\lambda>\omega_{0}(T)$. Then we have $\re\lambda>\omega_{0}(T^{\bullet})$ by \prettyref{cor:bi_sun_dual} 
and thus $\lambda\in\rho(A)\cap\rho(A^{\bullet})$ by \prettyref{thm:generator} (e) as well as 
\begin{align*}
  \langle R(\lambda,A)^{\bullet}x^{\bullet},x\rangle
&=\langle x^{\bullet},R(\lambda,A)x\rangle
 =\langle x^{\bullet},\int_{0}^{\infty}\e^{-\lambda s}T(s)x\d s\rangle\\
&=\int_{0}^{\infty}\e^{-\lambda s}\langle x^{\bullet},T(s)x\rangle\d s
 =\int_{0}^{\infty}\e^{-\lambda s}\langle T^{\bullet}(s)x^{\bullet},x\rangle\d s\\
&=\langle\int_{0}^{\infty}\e^{-\lambda s} T^{\bullet}(s)x^{\bullet}\d s,x\rangle
 =\langle R(\lambda,A^{\bullet})x^{\bullet},x\rangle
\end{align*}
for all $x^{\bullet}\in X^{\bullet}$ and $x\in X$. 
This yields $R(\lambda,A)^{\bullet}x^{\bullet}= R(\lambda,A^{\bullet})x^{\bullet}\in D(A^{\bullet})$ for all 
$x^{\bullet}\in X^{\bullet}$.

(b) Let $\lambda\in\rho(A)$ such that $R(\lambda,A)\colon (X,\gamma)\to(X,\gamma)$ is continuous. 
First, we show that $R(\lambda,A)^{\bullet}D(A^{\circ})\subset D(A^{\bullet})$. 
Let $x^{\circ}\in D(A^{\circ})\subset X^{\bullet}\subset X^{\circ}$ and $\mathcal{P}_{\gamma}$ be a directed system 
of seminorms that generates the mixed topology $\gamma$. 
Since $X^{\circ}=(X,\gamma)'$ by \prettyref{rem:bi_cont_dual}, 
there are $p_{\gamma}\in\mathcal{P}_{\gamma}$ and $C\geq 0$ such that 
\[
  |\langle R(\lambda,A)^{\bullet}x^{\circ},x\rangle|
= |\langle x^{\circ},R(\lambda,A)x\rangle|
\leq Cp_{\gamma}(R(\lambda,A)x)
\]
for all $x\in X$. Due to the continuity of $R(\lambda,A)\colon (X,\gamma)\to(X,\gamma)$, there are 
$\widetilde{p}_{\gamma}\in\mathcal{P}_{\gamma}$ and $\widetilde{C}\geq 0$ such that 
\[
  |\langle R(\lambda,A)^{\bullet}x^{\circ},x\rangle|
\leq C\widetilde{C}\widetilde{p}_{\gamma}(x)
\]
for all $x\in X$, implying $R(\lambda,A)^{\bullet}x^{\circ}\in(X,\gamma)'=X^{\circ}$. 
For any $x\in D(A)$ we have 
\[
 \langle Ax,R(\lambda,A)^{\bullet}x^{\circ}\rangle
=\langle R(\lambda,A)Ax,x^{\circ}\rangle
=\langle \lambda R(\lambda,A)x-x,x^{\circ}\rangle.
\]
For any $x\in X$ we note that 
\[
|y^{\circ}(x)|
\coloneqq|\langle \lambda R(\lambda,A)x-x,x^{\circ}\rangle|
\leq Cp_{\gamma}(\lambda R(\lambda,A)x-x)
\leq C\widetilde{C}|\lambda|\widetilde{p}_{\gamma}(x)+Cp_{\gamma}(x),
\]
which means that $y^{\circ}\in (X,\gamma)'=X^{\circ}$. Thus we have 
\[
 \langle Ax,R(\lambda,A)^{\bullet}x^{\circ}\rangle=\langle x,y^{\circ}\rangle
\] 
for all $x\in D(A)$, i.e.~$R(\lambda,A)^{\bullet}x^{\circ}\in D(A^{\circ})$ by \prettyref{prop:adj_bi_cont} (b).
Further, we observe that for any $x\in D(A)$
\[
 \langle A^{\circ}R(\lambda,A)^{\bullet}x^{\circ},x\rangle
=\langle x^{\circ},R(\lambda,A)Ax\rangle
=\langle x^{\circ},\lambda R(\lambda,A)x-x\rangle
=\langle \lambda R(\lambda,A)^{\bullet}x^{\circ}-x^{\circ},x\rangle.
\]
The sequential $\gamma$-density of $D(A)$ by \prettyref{thm:generator} (b) 
and $A^{\circ}R(\lambda,A)^{\bullet}x^{\circ}\in X^{\circ}$ as well as
$\lambda R(\lambda,A)^{\bullet}x^{\circ}-x^{\circ}=y^{\circ}\in X^{\circ}$ imply that for any $x\in X$
\begin{equation}\label{eq:resolv_bullet_1}
 \langle A^{\circ}R(\lambda,A)^{\bullet}x^{\circ},x\rangle
=\langle \lambda R(\lambda,A)^{\bullet}x^{\circ}-x^{\circ},x\rangle.
\end{equation}
Thus we have for any $x\in D(A)$  
\begin{align*}
  \langle Ax,A^{\circ}R(\lambda,A)^{\bullet}x^{\circ}\rangle
&=\langle Ax,\lambda R(\lambda,A)^{\bullet}x^{\circ}-x^{\circ}\rangle
 =\langle\lambda R(\lambda,A)Ax, x^{\circ}\rangle-\langle x,A^{\circ}x^{\circ}\rangle\\
&=\langle\lambda^{2} R(\lambda,A)x-\lambda x, x^{\circ}\rangle-\langle x,A^{\circ}x^{\circ}\rangle.
\end{align*}
For any $x\in X$ we remark that
\[
  |z^{\circ}(x)|
\coloneqq|\langle\lambda^{2} R(\lambda,A)x-\lambda x, x^{\circ}\rangle|
\leq Cp_{\gamma}(\lambda^{2} R(\lambda,A)x-\lambda x)
\leq C\widetilde{C}|\lambda|^{2}\widetilde{p}_{\gamma}(x)+C|\lambda|p_{\gamma}(x),
\]
yielding $z^{\circ}\in (X,\gamma)'=X^{\circ}$. It follows that $z^{\circ}-A^{\circ}x^{\circ}\in X^{\circ}$ and 
\[
 \langle Ax,A^{\circ}R(\lambda,A)^{\bullet}x^{\circ}\rangle
=\langle x, z^{\circ}-A^{\circ}x^{\circ}\rangle
\]
for all $x\in D(A)$, i.e.~$A^{\circ}R(\lambda,A)^{\bullet}x^{\circ}\in D(A^{\circ})$ 
by \prettyref{prop:adj_bi_cont} (b). 
We conclude that $R(\lambda,A)^{\bullet}x^{\circ}\in D(A^{\bullet})$ since $A^{\bullet}$ is the part of $A^{\circ}$ 
in $X^{\bullet}$ by \prettyref{cor:bi_sun_dual}. Thus we have shown that 
$R(\lambda,A)^{\bullet}D(A^{\circ})\subset D(A^{\bullet})$. Now, we show that 
$R(\lambda,A)^{\bullet}X^{\bullet}\subset D(A^{\bullet})$. First, we observe that for any $x^{\bullet}\in X^{\bullet}$
\[
\|R(\lambda,A)^{\bullet}x^{\bullet}\|_{X'}
=\sup_{\substack{x\in X\\ \|x\|\leq 1}}|\langle x^{\bullet},R(\lambda,A)x\rangle|
\leq \|R(\lambda,A)\|_{\mathcal{L}(X)}\|x^{\bullet}\|_{X'},
\]
implying that $R(\lambda,A)^{\bullet}\in\mathcal{L}(X^{\bullet};X')$. 
Since $R(\lambda,A)^{\bullet}x^{\circ}\in D(A^{\bullet})$ for any $x^{\circ}\in D(A^{\circ})$, we have 
\[
 \langle A^{\bullet}R(\lambda,A)^{\bullet}x^{\circ},x\rangle
=\langle A^{\circ}R(\lambda,A)^{\bullet}x^{\circ},x\rangle
\underset{\eqref{eq:resolv_bullet_1}}{=}\langle \lambda R(\lambda,A)^{\bullet}x^{\circ}-x^{\circ},x\rangle
=\langle x^{\circ},\lambda R(\lambda,A)x-x\rangle
\]
for all $x\in X$. We deduce that 
\begin{equation}\label{eq:resolv_bullet_2}
     \|A^{\bullet}R(\lambda,A)^{\bullet}x^{\circ}\|_{X'}
\leq |\lambda|\|R(\lambda,A)\|_{\mathcal{L}(X)}\|x^{\circ}\|_{X'}+\|x^{\circ}\|_{X'}
\end{equation}
for all $x^{\circ}\in D(A^{\circ})$. Let $x^{\bullet}\in X^{\bullet}$. Due to \prettyref{cor:bi_sun_dual} it holds
$X^{\bullet}=\overline{D(A^{\circ})}^{\|\cdot\|_{X^{\circ}}}=\overline{D(A^{\circ})}^{\|\cdot\|_{X'}}$ 
and thus there is a sequence $(x_{n}^{\circ})_{n\in\N}$ 
in $D(A^{\circ})$ which $\|\cdot\|_{X'}$-converges to $x^{\bullet}$. 
From $R(\lambda,A)^{\bullet}\in\mathcal{L}(X^{\bullet};X')$ we derive that the sequence
$(R(\lambda,A)^{\bullet}x_{n}^{\circ})_{n\in\N}$ in $D(A^{\bullet})$ $\|\cdot\|_{X'}$-converges 
to $R(\lambda,A)^{\bullet}x^{\bullet}\in \overline{D(A^{\circ})}^{\|\cdot\|_{X'}}=X^{\bullet}$. The estimate \eqref{eq:resolv_bullet_2} implies that 
$(A^{\bullet}R(\lambda,A)^{\bullet}x_{n}^{\circ})_{n\in\N}$ is a $\|\cdot\|_{X'}$-Cauchy sequence 
in $X^{\bullet}$. The space $X^{\bullet}$ is $\|\cdot\|_{X'}$-complete by \prettyref{cor:bi_sun_dual}, 
which yields that $(A^{\bullet}R(\lambda,A)^{\bullet}x_{n}^{\circ})_{n\in\N}$ $\|\cdot\|_{X'}$-converges to some 
$w^{\bullet}\in X^{\bullet}$. In combination with the $\|\cdot\|_{X'}$-closedness of the generator 
$(A^{\bullet},D(A^{\bullet}))$ by \prettyref{cor:bi_sun_dual} we get 
$R(\lambda,A)^{\bullet}x^{\bullet}\in D(A^{\bullet})$ and $w^{\bullet}=A^{\bullet}R(\lambda,A)^{\bullet}x^{\bullet}$.

(c) If $(X,\gamma)$ is C-sequential, then \cite[Condition C, p.~165--166]{kraaij2016} is fulfilled 
by \cite[Proposition 7.3, p.~179]{kraaij2016} and \cite[Theorem 7.4, p.~52]{wilansky1981}. 
Further, $(T(t))_{t\geq 0}$ is an SCLE-semigroup w.r.t.~$\gamma$ in the sense of \cite[p.~160]{kraaij2016}, 
i.e.~strongly continuous w.r.t.~$\gamma$ and locally equicontinuous w.r.t.~$\gamma$, 
by \cite[Theorem 7.4, p.~180]{kraaij2016}. 
Therefore $\{R(\lambda,A)\;|\;\re\lambda\geq\alpha\}$ is $\gamma$-equicontinuous 
for all $\alpha>\omega_{0}(T)$ by \cite[Theorem 6.4 (a)$\Leftrightarrow$(c), p.~176]{kraaij2016}. 
\end{proof}

Part (a) shows that the continuity of $R(\lambda,A)\colon (X,\gamma)\to(X,\gamma)$ need not be a necessary 
condition for $R(\lambda,A)^{\bullet}X^{\bullet}\subset D(A^{\bullet})$ for all $\re\lambda>\omega_{0}(T)$. 
This is an open question. Another open question is whether one actually has 
$R(\lambda,A)^{\bullet}X^{\bullet}\subset D(A^{\bullet})$ for all $\lambda\in\rho(A)$ in general. 
The answer is affirmative if $\tau$ coincides with the $\|\cdot\|$-topology. Because 
then $\gamma$ also coincides with the $\|\cdot\|$-topology, 
which gives that $R(\lambda,A)\colon (X,\gamma)\to(X,\gamma)$ is continuous for all $\lambda\in\rho(A)$. 
Therefore \prettyref{prop:resolv_bullet} (b) and \prettyref{thm:resolv_bullet} imply 
\cite[Theorem 1.4.2, p.~10]{vanneerven1992} (see also \cite[Theorem 14.3.3, p.~425]{hille1996}).

Let us come to an application of \prettyref{prop:resolv_bullet} (b) where we do not 
need the restriction that $\re\lambda>\omega_{0}(T)$ or that $\tau$ coincides with the $\|\cdot\|$-topology. 
We note that $\mathrm{C}_{\operatorname{b}}(\N)=\ell^{\infty}$ and $\mathrm{M}_{\operatorname{t}}(\N)=\ell^{1}$ 
(see e.g.~\cite[p.~477]{conway1967}), implying 
$\beta_{0}=\gamma(\|\cdot\|_{\infty},\tau_{\operatorname{co}})=\mu(\ell^{\infty},\ell^{1})$ 
by \prettyref{ex:mackey_mazur_examples} (b).
Further, it follows from \cite[p.~22]{kruse_schwenninger2022} (or \prettyref{ex:mackey_mazur_examples} (e)) 
that the triple $(\ell^{\infty},\|\cdot\|_{\infty},\mu(\ell^{\infty},\ell^{1}))$ is 
a sequentially complete Saks space and hence from \prettyref{defn:mixed_top_Saks} (a) 
that a bi-continuous semigroup on $(\ell^{\infty},\|\cdot\|_{\infty},\tau_{\operatorname{co}})$
is a bi-continuous semigroup on $(\ell^{\infty},\|\cdot\|_{\infty},\mu(\ell^{\infty},\ell^{1}))$ as well. 

\begin{exa}\label{ex:resolv_bullet} 
Let $q\colon \N\to\C$ be such that $\sup_{n\in\N}\re q(n)<\infty$, and let $(T(t))_{t\geq 0}$ be the 
bi-continuous multiplication semigroup on $(\ell^{\infty},\|\cdot\|_{\infty},\mu(\ell^{\infty},\ell^{1}))$ given by 
\[
T(t)x\coloneqq(\e^{q(n)t}x_{n})_{n\in\N},\quad x\in\ell^{\infty},\, t\geq 0.
\]
Then the generator $(A,D(A))$ of $(T(t))_{t\geq 0}$ is the multiplication operator 
$
A\colon D(A)\to \ell^{\infty},\;Ax =qx,
$
with domain $D(A)=\{x\in\ell^{\infty}\;|\;(q(n)x_{n})_{n\in\N}\in\ell^{\infty}\}$ by \cite[p.~353--354]{budde2019}.
Furthermore, we have $\sigma(A)\coloneqq\C\setminus\rho(A)=\overline{q(\N)}$ by \cite[Chap.~I, 4.8 Exercises (1), p.~30]{engel_nagel2000} 
and 
\[
 R(\lambda,A)x
=(\lambda-A)^{-1}x
=\Bigl(\frac{1}{\lambda-q(n)}x_{n}\Bigr)_{n\in\N},\quad x\in\ell^{\infty},\,\lambda\notin \sigma(A).
\]
Next, we show that $R(\lambda,A)$ is $\mu(\ell^{\infty},\ell^{1})$-continuous for all $\lambda\in\rho(A)$.
Due to \cite[Theorem III.2.15, p.~76]{diestel1977} a set $M\subset\ell^{1}$ is 
$\sigma(\ell^{1},\ell^{\infty})$-compact if and only if $M$ is $\|\cdot\|_{\ell^{1}}$-bounded and 
uniformly absolutely summable, i.e.
\[
\forall\;\varepsilon>0\;\exists\;\delta>0\;\forall\;\Omega\subset\N,\,|\Omega|<\delta,\,y\in M:\;
\sum_{n\in\Omega}|y_{n}|<\varepsilon,
\]
where $|\Omega|$ denotes the cardinality of $\Omega$.
Let $M\subset\ell^{1}$ be $\sigma(\ell^{1},\ell^{\infty})$-compact and absolutely convex. 
Then we have 
\begin{equation}\label{eq:ex_resolv_bullet}
 \sup_{y\in M}|\langle R(\lambda,A)x,y\rangle|
=\sup_{y\in M}\Bigl|\sum_{n\in\N}\frac{1}{\lambda-q(n)}x_{n}y_{n}\Bigr|
=\sup_{y\in M_{\lambda}}|\langle x, y\rangle |
\end{equation}
for all $x\in\ell^{\infty}$ and $\lambda\notin \sigma(A)$ where 
\[
M_{\lambda}\coloneqq\Bigl\{\Bigl(\frac{1}{\lambda-q(n)}y_{n}\Bigr)_{n\in\N}\;|\;y\in M\Bigr\}.
\]
Now, we only need to show that $M_{\lambda}$ is $\sigma(\ell^{1},\ell^{\infty})$-compact and absolutely convex. 
First, we note that $C_{\lambda}\coloneqq\sup_{n\in\N}\tfrac{1}{|\lambda-q(n)|}<\infty$ for all 
$\lambda\notin \sigma(A)=\overline{q(\N)}$ and 
\[
 \Bigr\|\Bigl(\frac{1}{\lambda-q(n)}y_{n}\Bigr)_{n\in\N}\Bigl\|_{\ell^{1}}
=\sum_{n\in\N}\frac{1}{|\lambda-q(n)|}|y_{n}|
\leq \sup_{n\in\N}\frac{1}{|\lambda-q(n)|}\|y\|_{\ell^{1}}
=C_{\lambda}\|y\|_{\ell^{1}}
\]
for all $y\in M$, which implies that $M_{\lambda}$ is $\|\cdot\|_{\ell^{1}}$-bounded because 
$M$ is $\|\cdot\|_{\ell^{1}}$-bounded. 
Due to the characterisation of $\sigma(\ell^{1},\ell^{\infty})$-compactness above 
it remains to show that $M_{\lambda}$ is uniformly absolutely summable. 
Let $\varepsilon>0$. Since $M$ is uniformly absolutely summable, there is $\delta>0$ such that 
for all $\Omega\subset\N$ with $|\Omega|<\delta$ and all $y\in M$ it holds 
\[
 \sum_{n\in\Omega}\Bigl|\frac{1}{\lambda-q(n)}y_{n}\Bigr|
\leq C_{\lambda}\sum_{n\in\Omega}|y_{n}|
<C_{\lambda}\frac{\varepsilon}{C_{\lambda}}
=\varepsilon,
\]
yielding that $M_{\lambda}$ is uniformly absolutely summable. 
Thus $M_{\lambda}$ is $\sigma(\ell^{1},\ell^{\infty})$-compact. 
Further, it is easy to check that $M_{\lambda}$ is absolutely convex because $M$ is absolutely convex. 
Hence $R(\lambda,A)$ is $\mu(\ell^{\infty},\ell^{1})$-continuous by \eqref{eq:ex_resolv_bullet} 
for all $\lambda\in\rho(A)$.
Therefore \prettyref{ex:mackey_mazur_examples} (b) and \prettyref{prop:resolv_bullet} (b) yield 
$R(\lambda,A)^{\bullet}X^{\bullet}\subset D(A^{\bullet})$ for all $\lambda\in\rho(A)$.
We conclude that $R(\lambda,A)^{\bullet}=R(\lambda,A^{\bullet})$ for all $\lambda\in\rho(A)$ and
\[
\rho(A)=\rho(A^{\bullet})=\C\setminus\overline{q(\N)}
\]
by \prettyref{thm:resolv_bullet} (a) and (b).
\end{exa}

Our interest in the example above comes from \cite[Example 2.3, p.~147]{JacoSchwWint22} 
(and its role in \cite{JacoSchwWint22}) where we replaced the space $c_{0}$ by $\ell^{\infty}$. 
Next, we generalise \cite[Proposition 2.1.1, p.~19]{vanneerven1992}.

\begin{prop}\label{prop:weakly_bi_sun_closed}
Let $(X,\|\cdot\|,\tau)$ be a sequentially complete d-consistent Mazur--Saks space  
and $(T(t))_{t\geq 0}$ a bi-continuous semigroup on $X$ with generator $(A,D(A))$. 
For $G\subset X$ and $t>0$ we set $G_{0}\coloneqq G$ and 
$
G_{t}\coloneqq\{\frac{1}{t}\int_{0}^{t}T(s)g\d s\;|\;g\in G\}
$.
Then we have 
\[
\overline{G}^{\sigma(X,X^{\bullet})}\subset \bigcap_{t>0}\overline{\bigcup_{0\leq r\leq t}G_{r}}^{\sigma(X,X^{\circ})}.
\] 
In particular, if $G=\bigcap_{t>0}\overline{\bigcup_{0\leq r\leq t}G_{r}}^{\sigma(X,X^{\circ})}$, then $G$ is 
$\sigma(X,X^{\bullet})$-closed.
\end{prop}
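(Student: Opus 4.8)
The plan is to show, for each fixed $t>0$, that $\overline{G}^{\sigma(X,X^{\bullet})}\subset\overline{\bigcup_{0\le r\le t}G_{r}}^{\sigma(X,X^{\circ})}$; intersecting over $t>0$ then yields the displayed inclusion, and the ``in particular'' part follows immediately since $G\subset\overline{G}^{\sigma(X,X^{\bullet})}$ always holds. Because $\bigcup_{0\le r\le t}G_{r}$ is in general not convex, I would avoid a Hahn--Banach separation and instead construct approximants directly: fix $x\in\overline{G}^{\sigma(X,X^{\bullet})}$, functionals $x_{1}^{\circ},\dots,x_{n}^{\circ}\in X^{\circ}$ and $\varepsilon>0$, and produce $y\in\bigcup_{0\le r\le t}G_{r}$ with $|\langle x_{i}^{\circ},x-y\rangle|<\varepsilon$ for all $i$.

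The device is to average each functional along the dual semigroup. For $r\in(0,t]$ set $x_{i,r}^{\circ}:=\tfrac1r\int_{0}^{r}T^{\circ}(s)x_{i}^{\circ}\,\d s$, a $\sigma(X^{\circ},X)$-Riemann integral which by \prettyref{prop:adj_bi_cont} and \prettyref{prop:Riemann_Pettis} (a) (applied to the triple $(X^{\circ},\|\cdot\|_{X^{\circ}},\sigma(X^{\circ},X))$) exists and also equals the corresponding $\sigma(X^{\circ},X)$-Pettis integral; moreover $x_{i,r}^{\circ}\in D(A^{\circ})\subset X^{\bullet}$ by \prettyref{thm:generator} (d), \prettyref{prop:adj_bi_cont} (b) and \prettyref{cor:bi_sun_dual}. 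With the convention $x_{i,0}^{\circ}:=x_{i}^{\circ}$, the first step is to verify $x_{i,r}^{\circ}\to x_{i}^{\circ}$ in $\sigma(X^{\circ},X)$ as $r\to0\rlim$. This amounts to $\tfrac1r\int_{0}^{r}\langle x_{i}^{\circ},T(s)z\rangle\,\d s\to\langle x_{i}^{\circ},z\rangle$ for every $z\in X$, which holds because $s\mapsto\langle x_{i}^{\circ},T(s)z\rangle$ is continuous on $[0,\infty)$: $s\mapsto T(s)z$ is $\tau$-continuous and $\|\cdot\|$-bounded on each bounded interval (exponential boundedness), so the $\tau$-sequential continuity of $x_{i}^{\circ}$ on $\|\cdot\|$-bounded sets applies. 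As there are only finitely many indices, I can then pick a single $r\in(0,t]$ with $|\langle x_{i}^{\circ},x\rangle-\langle x_{i,r}^{\circ},x\rangle|<\varepsilon/2$ for all $i$.

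Now fix this $r$. Since every $x_{i,r}^{\circ}$ lies in $X^{\bullet}$, the set $\{z\in X\colon|\langle x_{i,r}^{\circ},z-x\rangle|<\varepsilon/2,\ i=1,\dots,n\}$ is a $\sigma(X,X^{\bullet})$-neighbourhood of $x$ and hence meets $G$; choose $g$ in the intersection and put $y:=\tfrac1r\int_{0}^{r}T(s)g\,\d s\in G_{r}$. The key identity is $\langle x_{i}^{\circ},y\rangle=\langle x_{i,r}^{\circ},g\rangle$: both sides equal $\tfrac1r\int_{0}^{r}\langle x_{i}^{\circ},T(s)g\rangle\,\d s$. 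For the left side, the $\tau$-Riemann integral defining $y$ (\prettyref{thm:generator} (d)) is also a $\gamma$-Pettis integral by \prettyref{prop:Riemann_Pettis} (a), and $X^{\circ}=(X,\gamma)'$ by the Mazur hypothesis together with \prettyref{rem:bi_cont_dual}, so the Pettis property may be tested against $x_{i}^{\circ}$; for the right side one uses the defining property of the $\sigma(X^{\circ},X)$-Pettis integral paired with $g\in X=(X^{\circ},\sigma(X^{\circ},X))'$ and that $\langle T^{\circ}(s)x_{i}^{\circ},g\rangle=\langle x_{i}^{\circ},T(s)g\rangle$. Consequently $|\langle x_{i}^{\circ},x-y\rangle|\le|\langle x_{i}^{\circ},x\rangle-\langle x_{i,r}^{\circ},x\rangle|+|\langle x_{i,r}^{\circ},x-g\rangle|<\varepsilon$ for every $i$, which is exactly what is needed.

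I expect the main obstacle to be not conceptual but the careful justification of the interchange $\langle x^{\circ},\int_{0}^{r}T(s)g\,\d s\rangle=\langle\int_{0}^{r}T^{\circ}(s)x^{\circ}\,\d s,g\rangle$ for an \emph{arbitrary} $x^{\circ}\in X^{\circ}$ rather than merely for $x^{\circ}\in(X,\tau)'$; this is precisely where the Mazur assumption on $(X,\gamma)$ enters, via the identification $X^{\circ}=(X,\gamma)'$ from \prettyref{rem:bi_cont_dual} and the coincidence of the $\tau$-Riemann, $\gamma$-Riemann, $\tau$-Pettis and $\gamma$-Pettis integrals in \prettyref{prop:Riemann_Pettis} (a). A secondary point worth checking is the continuity, hence integrability on $[0,r]$, of $s\mapsto\langle x^{\circ},T(s)z\rangle$, which rests on $\tau$-strong continuity and exponential boundedness of $(T(t))_{t\ge0}$ together with the $\tau$-sequential continuity of elements of $X^{\circ}$ on $\|\cdot\|$-bounded sets.
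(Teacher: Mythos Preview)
Your proof is correct and follows essentially the same approach as the paper's: both arguments hinge on replacing the given functionals $x_{i}^{\circ}\in X^{\circ}$ by their time-averages $\frac{1}{r}\int_{0}^{r}T^{\circ}(s)x_{i}^{\circ}\,\d s\in D(A^{\circ})\subset X^{\bullet}$ and exploiting the duality identity $\langle x^{\circ},\frac{1}{r}\int_{0}^{r}T(s)g\,\d s\rangle=\langle \frac{1}{r}\int_{0}^{r}T^{\circ}(s)x^{\circ}\,\d s,g\rangle$. The only cosmetic difference is that the paper argues by contraposition (starting from $x\notin\bigcap_{t>0}\overline{\bigcup_{0\le r\le t}G_{r}}^{\sigma(X,X^{\circ})}$ and producing a $\sigma(X,X^{\bullet})$-neighbourhood of $x$ disjoint from $G$), while you argue directly; both routes use the Mazur hypothesis at the same point, namely to test the $\gamma$-Pettis integral against arbitrary $x^{\circ}\in X^{\circ}=(X,\gamma)'$.
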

\begin{proof}
Let $x\notin\bigcap_{t>0}\overline{\bigcup_{0\leq r\leq t}G_{r}}^{\sigma(X,X^{\circ})}$. We have to show that 
$x\notin \overline{G}^{\sigma(X,X^{\bullet})}$. By assumption there is some $t_{0}>0$ such that 
$x\notin \overline{\bigcup_{0\leq r\leq t_{0}}G_{r}}^{\sigma(X,X^{\circ})}$. 
Since the complement of the latter set is $\sigma(X,X^{\circ})$-open, there are some $n\in\N$ and 
$x_{i}^{\circ}\in X^{\circ}$, $1\leq i\leq n$, and $\varepsilon>0$ such that the $\sigma(X,X^{\circ})$-neighbourhood 
$V$ of $x$ given by 
\[
V\coloneqq V(x_{1}^{\circ},\ldots,x_{n}^{\circ};\varepsilon;x)\coloneqq\{y\in X\;|\;\forall\;1\leq i\leq n:\;
|\langle x_{i}^{\circ},x-y\rangle|<\varepsilon\}
\]
is disjoint from $\overline{\bigcup_{0\leq r\leq t_{0}}G_{r}}^{\sigma(X,X^{\circ})}$. 

Since $X^{\circ}=(X,\gamma)'$ by \prettyref{rem:bi_cont_dual}, for every $1\leq i\leq n$ there are $C_{i}>0$ 
and $p_{\gamma_{i}}\in \mathcal{P}_{\gamma}$ such that $|\langle x_{i}^{\circ},z\rangle|\leq C_{i}p_{\gamma_{i}}(z)$ 
for all $z\in X$ where $\mathcal{P}_{\gamma}$ is a directed system of seminorms that generates 
the mixed topology $\gamma$. From $\mathcal{P}_{\gamma}$ being directed it follows 
that there are $C\geq 1$ and $p_{\gamma}\in\mathcal{P}_{\gamma}$ such that 
$|\langle x_{i}^{\circ},z\rangle|\leq Cp_{\gamma}(z)$ for all $z\in X$ and $1\leq i\leq n$.
By the proof of \prettyref{thm:eqiv_norms} we know 
that $\gamma$-$\lim_{t\to 0\rlim}\frac{1}{t}\int_{0}^{t}T(s)x\d s-x=0$. Thus there 
is some $0<t_{1}\leq t_{0}$ such that 
\begin{equation}\label{eq:weakly_bi_sun_closed}
p_{\gamma}\Bigl(\frac{1}{t_{1}}\int_{0}^{t_{1}}T(s)x\d s-x\Bigr) < \frac{\varepsilon}{2C}.
\end{equation}
We claim that $\widetilde{V}\cap G=\varnothing$ where 
\[
\widetilde{V}\coloneqq V\Bigl(\frac{1}{t_{1}}\int_{0}^{t_{1}}T^{\circ}(s)x_{1}^{\circ}\d s,\ldots,
\frac{1}{t_{1}}\int_{0}^{t_{1}}T^{\circ}(s)x_{n}^{\circ}\d s;\frac{\varepsilon}{2};x\Bigr).
\]
Indeed, for $g\in G$ there is some $1\leq i_{0}\leq n$ such that 
\[
\Bigl|\langle x_{i_{0}}^{\circ},x-\frac{1}{t_{1}}\int_{0}^{t_{1}}T(s)g\d s\rangle\Bigr|\geq \varepsilon
\]
because $V\cap G_{t_{1}}=\varnothing$. Then we have 
\begin{align*}
  \Bigl|\langle\frac{1}{t_{1}}\int_{0}^{t_{1}}T^{\circ}(s)x_{i_{0}}^{\circ}\d s,x-g\rangle\Bigr|
&=\Bigl|\langle x_{i_{0}}^{\circ}, \frac{1}{t_{1}}\int_{0}^{t_{1}}T(s)x\d s
                                  -\frac{1}{t_{1}}\int_{0}^{t_{1}}T(s)g\d s\rangle\Bigr|\\
&\geq \Bigl|\langle x_{i_{0}}^{\circ},x-\frac{1}{t_{1}}\int_{0}^{t_{1}}T(s)g\d s\rangle\Bigr|
     -\Bigl|\langle x_{i_{0}}^{\circ},\frac{1}{t_{1}}\int_{0}^{t_{1}}T(s)x\d s-x\rangle\Bigr|\\
&\geq \varepsilon -Cp_{\gamma}\Bigl(\frac{1}{t_{1}}\int_{0}^{t_{1}}T(s)x\d s-x\Bigr)
 \geq \varepsilon -C\frac{\varepsilon}{2C}=\frac{\varepsilon}{2},
\end{align*}
which shows that $\widetilde{V}\cap G=\varnothing$ and proves the claim. 
However, $\frac{1}{t_{1}}\int_{0}^{t_{1}}T^{\circ}(s)x_{i}^{\circ}\d s\in D(A^{\circ})\subset X^{\bullet}$ 
for all $1\leq i\leq n$ by \prettyref{thm:generator} (d) and \prettyref{cor:bi_sun_dual}. 
Thus $\widetilde{V}$ is $\sigma(X,X^{\bullet})$-open and 
$\widetilde{V}\cap \overline{G}^{\sigma(X,X^{\bullet})}=\varnothing$. Due to \eqref{eq:weakly_bi_sun_closed} 
we have $x\in\widetilde{V}$ and hence $x\notin\overline{G}^{\sigma(X,X^{\bullet})}$.
\end{proof}

Now, we generalise the definition of (weak) equicontinuity w.r.t.~a norm-strongly continuous semigroup 
from \cite[p.~25]{vanneerven1992} and \cite[Proposition 2.2.2, p.~26]{vanneerven1992} to the bi-continuous setting.

\begin{defn}\label{defn:T(t)_equicont}
Let $(X,\|\cdot\|,\tau)$ be a sequentially complete d-consistent Saks space and $(T(t))_{t\geq 0}$ a bi-continuous semigroup on $X$. 
We say that a set $G\subset X$ is $\gamma$-$(T(t))_{t\geq 0}$\emph{-equicontinuous} if the set
$\{t\mapsto T(t)g\;|\;g\in G\}$ is $\gamma$-equicontinuous at $t=0$. 
We say that $G$ is $\sigma(X,X^{\circ})$-$(T(t))_{t\geq 0}$\emph{-equicontinuous} if for each $x^{\circ}\in X^{\circ}$
the set $\{t\mapsto \langle x^{\circ}, T(t)g\rangle\;|\;g\in G\}$ is equicontinuous at $t=0$. 
\end{defn}

\begin{rem}\label{rem:T(t)_equicont}
Let $(X,\|\cdot\|,\tau)$ be a sequentially complete d-consistent Saks space, $G\subset X$ and 
$(T(t))_{t\geq 0}$ a bi-continuous semigroup on $X$.
\begin{enumerate}
\item[(a)] If $(X,\gamma)$ is a Mazur space and $G$ $\gamma$-$(T(t))_{t\geq 0}$-equicontinuous, 
then $G$ is $\sigma(X,X^{\circ})$-$(T(t))_{t\geq 0}$-equicontinuous as $X^{\circ}=(X,\gamma)'$ 
by \prettyref{rem:bi_cont_dual}.
\item[(b)] If $G$ is $\gamma$-$(T(t))_{t\geq 0}$-equicontinuous, then $\overline{G}^{\gamma}$ is 
$\gamma$-$(T(t))_{t\geq 0}$-equicontinuous which is easily seen.
\item[(c)] If $G$ is $\sigma(X,X^{\circ})$-$(T(t))_{t\geq 0}$-equicontinuous, 
then $\overline{G}^{\sigma(X,X^{\circ})}$ is 
$\sigma(X,X^{\circ})$-$(T(t))_{t\geq 0}$-equicontinuous which is easily seen as well.
\end{enumerate}
\end{rem}

\begin{prop}\label{prop:T(t)_equicont}
Let $(X,\|\cdot\|,\tau)$ be a sequentially complete d-consistent Mazur--Saks space   
and $(T(t))_{t\geq 0}$ a bi-continuous semigroup on $X$. If $G\subset X$ is 
$\sigma(X,X^{\circ})$-$(T(t))_{t\geq 0}$-equicontinuous, then 
\[
\overline{G}^{\sigma(X,X^{\circ})}=\bigcap_{t>0}\overline{\bigcup_{0\leq r\leq t}G_{r}}^{\sigma(X,X^{\circ})}.
\] 
\end{prop}
\begin{proof}
The inclusion $\subset$ is clear since $G_{0}=G$. We prove the converse inclusion $\supset$ by contraposition. 
Let $x\notin \overline{G}^{\sigma(X,X^{\circ})}$.
We have to show that there is some $t_{0}>0$ such that 
$x\notin \overline{\bigcup_{0\leq r\leq t_{0}}G_{r}}^{\sigma(X,X^{\circ})}$. 
Like in \prettyref{prop:weakly_bi_sun_closed} there are some $n\in\N$ and 
$x_{i}^{\circ}\in X^{\circ}$, $1\leq i\leq n$, and $\varepsilon>0$ such that the $\sigma(X,X^{\circ})$-neighbourhood 
$V$ of $x$ given by 
\[
V\coloneqq V(x_{1}^{\circ},\ldots,x_{n}^{\circ};\varepsilon;x)\coloneqq\{y\in X\;|\;\forall\;1\leq i\leq n:\;
|\langle x_{i}^{\circ},x-y\rangle|<\varepsilon\}
\]
is disjoint from $G=G_{0}$. 

By the $\sigma(X,X^{\circ})$-$(T(t))_{t\geq 0}$-equicontinuity there is $t_{0}>0$ such that
for every $0\leq r\leq t_{0}$, $g\in G$ and $1\leq i\leq n$ we have
\[
|\langle x_{i}^{\circ},T(r)g-g\rangle|<\frac{\varepsilon}{2}.
\] 
This yields for every $0< r\leq t_{0}$, $g\in G$ and $1\leq i\leq n$ that
\[
    \Bigl|\langle x_{i}^{\circ}, \frac{1}{r}\int_{0}^{r}T(s)g\d s-g\rangle\Bigr|
\underset{\eqref{eq:eqiv_norms}}{\leq}\frac{\varepsilon}{2}.
\] 
We derive that for every $0< r\leq t_{0}$, $g\in G$ and $1\leq i\leq n$
\[
     \Bigl|\langle x_{i}^{\circ}, x-\frac{1}{r}\int_{0}^{r}T(s)g\d s\rangle\Bigr|
\geq |\langle x_{i}^{\circ}, x-g\rangle|-\Bigl|\langle x_{i}^{\circ}, g-\frac{1}{r}\int_{0}^{r}T(s)g\d s\rangle\Bigr|
\geq \varepsilon-\frac{\varepsilon}{2}=\frac{\varepsilon}{2}.
\]
We deduce that $\widetilde{V}\cap G_{r}=\varnothing$ for all $0< r\leq t_{0}$ where 
$\widetilde{V}\coloneqq V(x_{1}^{\circ},\ldots,x_{n}^{\circ};\frac{\varepsilon}{2};x)$. 
Since $\widetilde{V}\subset V$ and $V\cap G_{0}=\varnothing$, we obtain $\widetilde{V}\cap G_{0}=\varnothing$ as well. 
The set $\widetilde{V}$ is $\sigma(X,X^{\circ})$-open, which implies 
\[
\widetilde{V}\cap\overline{\bigcup_{0\leq r\leq t_{0}}G_{r}}^{\sigma(X,X^{\circ})}=\varnothing.
\]
This finishes the proof because $x\in\widetilde{V}$. 
\end{proof}

As a direct consequence of \prettyref{prop:weakly_bi_sun_closed} and \prettyref{prop:T(t)_equicont} we obtain 
the following corollary, which generalises \cite[Corollary 2.2.3, p.~26]{vanneerven1992}.

\begin{cor}\label{cor:weakly_bi_sun_closed}
Let $(X,\|\cdot\|,\tau)$ be a sequentially complete d-consistent Mazur--Saks space 
and $(T(t))_{t\geq 0}$ a bi-continuous semigroup on $X$. Then the $\sigma(X,X^{\circ})$- and 
the $\sigma(X,X^{\bullet})$-closure of $\sigma(X,X^{\circ})$-$(T(t))_{t\geq 0}$-equicontinuous sets coincide. 
In particular, $\sigma(X,X^{\circ})$-closed $\sigma(X,X^{\circ})$-$(T(t))_{t\geq 0}$-equicontinuous sets are 
$\sigma(X,X^{\bullet})$-closed.
\end{cor}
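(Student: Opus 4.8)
The plan is to obtain the statement as a formal consequence of the two preceding propositions, applied to one and the same set. Fix a $\sigma(X,X^{\circ})$-$(T(t))_{t\geq 0}$-equicontinuous set $G\subset X$ and, as in \prettyref{prop:weakly_bi_sun_closed}, put $G_{0}:=G$ and $G_{r}:=\{\tfrac{1}{r}\int_{0}^{r}T(s)g\d s\;|\;g\in G\}$ for $r>0$. Since $X^{\bullet}$ is a linear subspace of $X^{\circ}$ by \prettyref{cor:bi_sun_dual}, the weak topology $\sigma(X,X^{\bullet})$ is coarser than $\sigma(X,X^{\circ})$, so the inclusion $\overline{G}^{\sigma(X,X^{\circ})}\subset\overline{G}^{\sigma(X,X^{\bullet})}$ is trivial; only the reverse inclusion carries content.

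For the reverse inclusion I would chain the two propositions. \prettyref{prop:weakly_bi_sun_closed}, which requires nothing beyond \prettyref{ass:standard}, \prettyref{ass:adj_bi_cont} and the Mazur property of $(X,\gamma)$, gives
\[
\overline{G}^{\sigma(X,X^{\bullet})}\subset \bigcap_{t>0}\overline{\bigcup_{0\leq r\leq t}G_{r}}^{\sigma(X,X^{\circ})}.
\]
On the other hand, \prettyref{prop:T(t)_equicont}, whose hypothesis is precisely the $\sigma(X,X^{\circ})$-$(T(t))_{t\geq 0}$-equicontinuity of $G$ that we have assumed, identifies the right-hand side:
\[
\bigcap_{t>0}\overline{\bigcup_{0\leq r\leq t}G_{r}}^{\sigma(X,X^{\circ})}=\overline{G}^{\sigma(X,X^{\circ})}.
\]
Combining the two displays yields $\overline{G}^{\sigma(X,X^{\bullet})}\subset\overline{G}^{\sigma(X,X^{\circ})}$, and together with the trivial inclusion this gives $\overline{G}^{\sigma(X,X^{\bullet})}=\overline{G}^{\sigma(X,X^{\circ})}$, as claimed. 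The ``in particular'' assertion is then immediate: if such a $G$ is moreover $\sigma(X,X^{\circ})$-closed, then $\overline{G}^{\sigma(X,X^{\bullet})}=\overline{G}^{\sigma(X,X^{\circ})}=G$, so $G$ is $\sigma(X,X^{\bullet})$-closed.

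Since the substance is entirely contained in \prettyref{prop:weakly_bi_sun_closed} and \prettyref{prop:T(t)_equicont}, there is no genuine obstacle here; the only thing to watch is bookkeeping — namely checking that both propositions apply to the very same $G$ (the former under the standing assumptions, the latter under the equicontinuity hypothesis) and that the definitions of the iterated averages $G_{r}$ used in the two statements agree, which they do.
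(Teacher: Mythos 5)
Your proof is correct and is exactly the argument the paper intends: the corollary is stated there as a direct consequence of \prettyref{prop:weakly_bi_sun_closed} and \prettyref{prop:T(t)_equicont}, chained precisely as you do, with the trivial inclusion coming from $X^{\bullet}\subset X^{\circ}$. Nothing further is needed.
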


The following two corollaries represent \cite[Corollaries 2.2.4, 2.2.5 p.~26]{vanneerven1992} 
in the bi-continuous setting.

\begin{cor}\label{cor:weakly_bi_sun_relative_closed}
Let $(X,\|\cdot\|,\tau)$ be a sequentially complete d-consistent Mazur--Saks space   
and $(T(t))_{t\geq 0}$ a bi-continuous semigroup on $X$. 
Then the relative $\sigma(X,X^{\circ})$- and $\sigma(X,X^{\bullet})$-topology coincide on 
$\sigma(X,X^{\circ})$-$(T(t))_{t\geq 0}$-equicontinuous sets.
\end{cor}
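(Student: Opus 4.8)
The plan is to exhibit the two relative topologies as having the same closed sets. Since $X^{\bullet}\subset X^{\circ}$, we have $\sigma(X,X^{\bullet})\leq\sigma(X,X^{\circ})$, so every relatively $\sigma(X,X^{\bullet})$-closed subset of $G$ is automatically relatively $\sigma(X,X^{\circ})$-closed in $G$. It therefore only remains to prove the converse: if $F\subset G$ satisfies $F=\overline{F}^{\sigma(X,X^{\circ})}\cap G$, then $F$ is relatively $\sigma(X,X^{\bullet})$-closed in $G$.

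First I would observe that $F$ is again $\sigma(X,X^{\circ})$-$(T(t))_{t\geq 0}$-equicontinuous. Indeed, by \prettyref{defn:T(t)_equicont} this property asserts, for each $x^{\circ}\in X^{\circ}$, the equicontinuity at $t=0$ of the family $\{t\mapsto\langle x^{\circ},T(t)g\rangle\;|\;g\in G\}$, and a subfamily of an equicontinuous family of scalar functions is equicontinuous; since $F\subset G$, the claim follows. Hence \prettyref{cor:weakly_bi_sun_closed} applies to $F$ and gives $\overline{F}^{\sigma(X,X^{\circ})}=\overline{F}^{\sigma(X,X^{\bullet})}$.

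Now I would intersect this identity with $G$. Using the elementary fact from general topology that, for $F\subset G\subset X$, the closure of $F$ in the subspace $G$ (for either topology $\sigma(X,X^{\circ})$ or $\sigma(X,X^{\bullet})$) equals the closure of $F$ in $X$ intersected with $G$, we obtain
\[
\overline{F}^{\sigma(X,X^{\bullet})}\cap G=\overline{F}^{\sigma(X,X^{\circ})}\cap G=F.
\]
Thus $F$ equals its relative $\sigma(X,X^{\bullet})$-closure in $G$, i.e.\ $F$ is relatively $\sigma(X,X^{\bullet})$-closed in $G$. Combining this with the opening observation, the relative $\sigma(X,X^{\circ})$- and $\sigma(X,X^{\bullet})$-closed subsets of $G$ are the same, hence the two relative topologies coincide on $G$.

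I do not expect a genuine obstacle: the statement is essentially a repackaging of \prettyref{cor:weakly_bi_sun_closed}. The only step requiring a little attention is the transition from a statement about closures of equicontinuous sets to a statement about relative topologies, which is resolved by noting that equicontinuity passes to subsets and that subspace closures are computed by intersecting ambient closures with the subspace.
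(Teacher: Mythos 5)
Your proof is correct and follows essentially the same route as the paper: both reduce the claim to \prettyref{cor:weakly_bi_sun_closed} together with the standard fact that relative closures are ambient closures intersected with the subspace. The only cosmetic difference is that you apply the closure-coincidence to the subset $F$ itself (using that subfamilies of equicontinuous families are equicontinuous), whereas the paper applies it to the ambient $\sigma(X,X^{\circ})$-closure $\widetilde{H}$ of the relatively closed set, invoking \prettyref{rem:T(t)_equicont} (c) to see that this closure is still equicontinuous.
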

\begin{proof}
Let $G\subset X$ be $\sigma(X,X^{\circ})$-$(T(t))_{t\geq 0}$-equicontinuous and suppose that $H\subset G$ 
is relatively $\sigma(X,X^{\circ})$-closed. Denoting by $\widetilde{H}$ the $\sigma(X,X^{\circ})$-closure of 
$H$ in $X$, we have $\widetilde{H}\cap G=H$. 
Further, $\widetilde{H}$ is $\sigma(X,X^{\circ})$-$(T(t))_{t\geq 0}$-equicontinuous 
by \prettyref{rem:T(t)_equicont} (c) and thus $\sigma(X,X^{\bullet})$-closed 
by \prettyref{cor:weakly_bi_sun_closed}, yielding that $H=\widetilde{H}\cap G$ is relatively 
$\sigma(X,X^{\bullet})$-closed in $G$.
\end{proof}

\begin{cor}\label{cor:weakly_bi_sun_convergence}
Let $(X,\|\cdot\|,\tau)$ be a sequentially complete d-consistent Mazur--Saks space   
and $(T(t))_{t\geq 0}$ a bi-continuous semigroup on $X$. 
Then a $\sigma(X,X^{\circ})$-$(T(t))_{t\geq 0}$-equi\-continuous sequence is $\sigma(X,X^{\circ})$-convergent if 
and only if it is $\sigma(X,X^{\bullet})$-conver\-gent.
\end{cor}
\begin{proof}
The implication $\Rightarrow$ is obvious because $\sigma(X,X^{\circ})$ is a finer topology 
than $\sigma(X,X^{\bullet})$. Let us turn to the implication $\Leftarrow$. 
Let $(x_{n})_{n\in\N}$ be a $\sigma(X,X^{\circ})$-$(T(t))_{t\geq 0}$-equicontinuous sequence in $X$ that 
is $\sigma(X,X^{\bullet})$-convergent to some $x\in X$. Then the set $G\coloneqq\{x_{n}\;|\;n\in\N\}\cup\{x\}$ 
is the $\sigma(X,X^{\bullet})$-closure of $\{x_{n}\;|\;n\in\N\}$ and so its $\sigma(X,X^{\circ})$-closure 
by \prettyref{cor:weakly_bi_sun_closed} as well. Hence $G$ is also 
$\sigma(X,X^{\circ})$-$(T(t))_{t\geq 0}$-equicontinuous by \prettyref{rem:T(t)_equicont} (c). 
Let $V$ be a $\sigma(X,X^{\circ})$-open neighbourhood of $x$ in $X$. Then $V\cap G$ is relatively 
$\sigma(X,X^{\circ})$-open in $G$ and thus relatively $\sigma(X,X^{\bullet})$-open in $G$ by 
\prettyref{cor:weakly_bi_sun_relative_closed}. This implies that all but finitely many $x_{n}$ lie in 
$(V\cap G)\subset V$, which we had to show.  
\end{proof}

Now, we give a class of sets to which the three preceding corollaries can be applied 
due to \prettyref{rem:T(t)_equicont} (a) if $(X,\gamma)$ is a Mazur space.

\begin{prop}\label{prop:resolv_equicont}
Let $(X,\|\cdot\|,\tau)$ be a sequentially complete d-consistent Saks space
and $(T(t))_{t\geq 0}$ a bi-continuous semigroup on $X$ with generator $(A,D(A))$. 
If $H$ is $\|\cdot\|$-bounded, then $R(\lambda,A)H$ is 
$\gamma$-$(T(t))_{t\geq 0}$-equicontinuous for all $\lambda\in\rho(A)$.
\end{prop}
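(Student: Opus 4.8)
The plan is to reduce the assertion to a uniform $\tau$-smallness estimate for $T(t)R(\lambda,A)h-R(\lambda,A)h$, obtained from the integral representation of a bi-continuous semigroup on the domain of its generator. Fix $\lambda\in\rho(A)$ and a $\|\cdot\|$-bounded set $H\subset X$, and put $g_h:=R(\lambda,A)h\in D(A)$ for $h\in H$; by \prettyref{defn:T(t)_equicont} we must show that for every $\gamma$-neighbourhood $V$ of $0$ there is $\delta>0$ with $T(t)g_h-g_h\in V$ for all $t\in[0,\delta)$ and $h\in H$.

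First I would carry out a reduction to the topology $\tau$. Since $(T(t))_{t\ge0}$ is exponentially bounded, $H$ is $\|\cdot\|$-bounded and $R(\lambda,A)\in\mathcal{L}(X)$, the set $N:=\{T(t)g_h-g_h\;|\;t\in[0,1],\,h\in H\}$ is contained in some $\|\cdot\|$-ball $rB_{\|\cdot\|}$. As $\gamma$ coincides with $\tau$ on $\|\cdot\|$-bounded sets (\prettyref{defn:mixed_top_Saks}~(a)), the trace on $rB_{\|\cdot\|}$ of any $\gamma$-neighbourhood $V$ of $0$ contains $W\cap rB_{\|\cdot\|}$ for a suitable $\tau$-neighbourhood $W$ of $0$; together with $\tau\le\gamma$ this makes $\gamma$-equicontinuity at $t=0$ of the family $\{t\mapsto T(t)g_h\}$ equivalent to its $\tau$-equicontinuity at $t=0$, because the relevant increments always lie in $N\subset rB_{\|\cdot\|}$. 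So it suffices to produce, for each $\tau$-neighbourhood $W$ of $0$, a $\delta>0$ such that $T(t)g_h-g_h\in W$ for all $t<\delta$ and $h\in H$.

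The core input is the $\tau$-Riemann integral identity $T(t)x-x=\int_0^t T(s)Ax\,\d s$ for $x\in D(A)$, $t>0$. I would obtain it from \prettyref{thm:generator}: part (d) gives $A\int_0^t T(s)x\,\d s=T(t)x-x$, and approximating $\int_0^t T(s)x\,\d s$ by Riemann sums $S_n$ with mesh tending to $0$, part (c) identifies $AS_n$ as Riemann sums of $\int_0^t T(s)Ax\,\d s$; as $(S_n)_n$ and $(AS_n)_n$ are $\tau$-convergent and $\|\cdot\|$-bounded, bi-closedness of $A$ (part (a)) yields $A\int_0^t T(s)x\,\d s=\int_0^t T(s)Ax\,\d s$. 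Applying this to $x=g_h$ and using $Ag_h=AR(\lambda,A)h=\lambda R(\lambda,A)h-h$ gives
\[
T(t)g_h-g_h=\int_0^t T(s)\bigl(\lambda R(\lambda,A)h-h\bigr)\,\d s .
\]
The set $H':=\{\lambda R(\lambda,A)h-h\;|\;h\in H\}$ is $\|\cdot\|$-bounded, hence so is $\{T(s)y\;|\;s\in[0,1],\,y\in H'\}$, and therefore (being $\tau$-bounded, as $\tau\le\tau_{\|\cdot\|}$) for every seminorm $p$ from a directed generating system $\mathcal{P}_\tau$ of $\tau$ the constant $C_p:=\sup\{p(T(s)y)\;|\;s\in[0,1],\,y\in H'\}$ is finite. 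Using $p\bigl(\int_0^t f(s)\,\d s\bigr)\le\int_0^t p(f(s))\,\d s$ for $\tau$-continuous (Pettis-integrable) $f$ — \cite[Proposition 22.14, p.~256]{meisevogt1997}, exactly as in the proof of \prettyref{thm:eqiv_norms} — I get $p(T(t)g_h-g_h)\le tC_p$ for all $0<t\le1$ and all $h\in H$. Given a $\tau$-neighbourhood $W$ of $0$, choose $p\in\mathcal{P}_\tau$ and $\rho>0$ with $\{z\;|\;p(z)<\rho\}\subset W$ and set $\delta:=\min\bigl(1,\rho/(C_p+1)\bigr)$; then $T(t)g_h-g_h\in W$ for all $t<\delta$ and $h\in H$, which is the required $\tau$-equicontinuity at $t=0$.

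The main obstacle, such as it is, lies in the integral identity $T(t)x-x=\int_0^t T(s)Ax\,\d s$ on $D(A)$, which genuinely uses bi-closedness of the generator (the resolvent not being $\tau$-continuous in general, one cannot simply move $R(\lambda,A)$ through the integral instead); the other point to be careful about is the passage from $\gamma$- to $\tau$-equicontinuity, which is elementary but hinges on the increments remaining in a fixed $\|\cdot\|$-bounded set. Everything else is a routine assembly of facts from Section~\ref{sec:interlude}.
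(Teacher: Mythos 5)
Your proof is correct and follows essentially the same route as the paper: both rest on the identity $T(t)R(\lambda,A)h-R(\lambda,A)h=\int_0^t T(s)AR(\lambda,A)h\,\d s$ together with a seminorm bound that is linear in $t$ and uniform over the $\|\cdot\|$-bounded set $H$. The only (cosmetic) difference is that the paper estimates directly with $\gamma$-seminorms chosen via the Saks property so that $p_\gamma\le\|\cdot\|$, whereas you first pass from $\gamma$- to $\tau$-equicontinuity using that the increments stay in a fixed norm ball where the two topologies agree; both variants are valid.
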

\begin{proof}
Let $\mathcal{P}_{\gamma}$ be a directed system of seminorms that generates the mixed topology $\gamma$. 
Due to \cite[Lemma 5.5 (a), p.~2680]{kruse_meichnser_seifert2018} and 
\cite[Remark 2.3 (c), p.~3]{kruse_schwenninger2022} we may choose $\mathcal{P}_{\gamma}$ such that $\|x\|=\sup_{p_{\gamma}\in\mathcal{P}_{\gamma}}p_{\gamma}(x)$ 
for all $x\in X$.
We start with noting that the map $s\mapsto T(s)AR(\lambda,A)h$ is $\gamma$-Pettis integrable on $[0,t]$ and 
\[
T(t)R(\lambda,A)h-R(\lambda,A)h=\int_{0}^{t}T(s)AR(\lambda,A)h\d s
\]
for all $t>0$ and $h\in H$ by \prettyref{thm:generator} (c) and (d). 
For any $x'\in (X,\gamma)'$ we get
\[
\Bigl|\langle x',\int_{0}^{t}T(s)AR(\lambda,A)h\d s\rangle\Bigr|
\leq t\sup_{s\in[0,t]}|\langle x', T(s)AR(\lambda,A)h\rangle|,
\]
resulting in 
\begin{align*}
     p_{\gamma}\Bigl(\int_{0}^{t}T(s)AR(\lambda,A)h\d s\Bigr)
&\leq t\sup_{s\in[0,t]} p_{\gamma}(T(s)AR(\lambda,A)h)
 \leq t\sup_{s\in[0,t]} \|T(s)AR(\lambda,A)h\|\\
&\leq t\sup_{s\in[0,t]} \|T(s)\|_{\mathcal{L}(X)}\|AR(\lambda,A)h\|\\
&\leq tM\e^{|\omega|t}\|AR(\lambda,A)\|_{\mathcal{L}(X)}\|h\|
\end{align*}
for any $p_{\gamma}\in\mathcal{P}_{\gamma}$ since $(T(t))_{t\geq 0}$ is exponentially bounded and 
$AR(\lambda,A)\in\mathcal{L}(X)$ because $AR(\lambda,A)x=\lambda R(\lambda,A)x-x$ for all $x\in X$. 
Since $H$ is $\|\cdot\|$-bounded, there is $C>0$ such that $\|h\|\leq C$ for all $h\in H$, which yields
\[
p_{\gamma}(T(t)R(\lambda,A)h-R(\lambda,A)h)\leq tMC\e^{|\omega|t}\|AR(\lambda,A)\|_{\mathcal{L}(X)}
\]
for all $t>0$ and $p_{\gamma}\in\mathcal{P}_{\gamma}$. This means that $R(\lambda,A)H$ 
is $\gamma$-$(T(t))_{t\geq 0}$-equicontinuous at $t=0$.
\end{proof}

\prettyref{prop:resolv_equicont} in combination with \prettyref{rem:T(t)_equicont} (b) 
generalises \cite[Proposition 2.2.6, p.~27]{vanneerven1992}.
The next proposition transfers one direction of \cite[Corollary 2.2.8, p.~28]{vanneerven1992} to the bi-continuous 
setting. 

\begin{prop}\label{prop:resolvent_weakly_compact}
Let $(X,\|\cdot\|,\tau)$ be a sequentially complete d-consistent Mazur--Saks space
and $(T(t))_{t\geq 0}$ a bi-continuous semigroup on $X$ with generator $(A,D(A))$. 
Let $G\subset X$ be $\sigma(X,X^{\bullet})$-compact. Then the following assertions hold: 
\begin{enumerate}
\item[(a)] $G$ is $\|\cdot\|$-bounded.
\item[(b)] If $\lambda\in\rho(A)$ is such that $R(\lambda,A)^{\bullet}X^{\bullet}\subset X^{\bullet}$, 
then $R(\lambda,A)G$ is $\sigma(X,X^{\circ})$-compact. 
In particular, $R(\lambda,A)G$ is $\sigma(X,X^{\circ})$-compact if $\re\lambda >\omega_{0}(T)$ or 
$R(\lambda,A)$ is $\gamma$-continuous. 
\end{enumerate}
\end{prop}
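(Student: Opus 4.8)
The plan is to treat part (a) by a uniform boundedness argument transported through the canonical embedding $j\colon X\to{X^{\bullet}}'$, and part (b) by first observing that $R(\lambda,A)$ is continuous for the weak topology $\sigma(X,X^{\bullet})$, and then upgrading the resulting $\sigma(X,X^{\bullet})$-compactness of $R(\lambda,A)G$ to $\sigma(X,X^{\circ})$-compactness by means of the equicontinuity results established earlier.

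For part (a), I would recall from \prettyref{cor:embed_bi_sun_dual} that $j\colon X\to{X^{\bullet}}'$ is injective with $\|j(g)\|_{{X^{\bullet}}'}=\|g\|^{\bullet}$ for all $g\in X$, and that $\|\cdot\|^{\bullet}$ and $\|\cdot\|$ are equivalent norms by \prettyref{thm:eqiv_norms}. For each $x^{\bullet}\in X^{\bullet}$ the functional $g\mapsto\langle x^{\bullet},g\rangle=\langle j(g),x^{\bullet}\rangle$ is $\sigma(X,X^{\bullet})$-continuous, hence bounded on the $\sigma(X,X^{\bullet})$-compact set $G$. Thus $\{j(g)\;|\;g\in G\}$ is a pointwise bounded family of bounded linear functionals on the Banach space $X^{\bullet}$ (a Banach space by \prettyref{cor:bi_sun_dual}), and the uniform boundedness principle yields $\sup_{g\in G}\|g\|^{\bullet}=\sup_{g\in G}\|j(g)\|_{{X^{\bullet}}'}<\infty$; the equivalence of $\|\cdot\|^{\bullet}$ and $\|\cdot\|$ then gives $\sup_{g\in G}\|g\|<\infty$.

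For part (b), assume $\lambda\in\rho(A)$ with $R(\lambda,A)^{\bullet}X^{\bullet}=R(\lambda,A)'X^{\bullet}\subset X^{\bullet}$. First I would note that for each $x^{\bullet}\in X^{\bullet}$ the functional $x\mapsto\langle x^{\bullet},R(\lambda,A)x\rangle=\langle R(\lambda,A)'x^{\bullet},x\rangle$ is evaluation against the element $R(\lambda,A)'x^{\bullet}\in X^{\bullet}$, so $R(\lambda,A)\colon(X,\sigma(X,X^{\bullet}))\to(X,\sigma(X,X^{\bullet}))$ is continuous and $R(\lambda,A)G$ is therefore $\sigma(X,X^{\bullet})$-compact. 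By part (a), $G$ is $\|\cdot\|$-bounded, so \prettyref{prop:resolv_equicont} shows that $R(\lambda,A)G$ is $\gamma$-$(T(t))_{t\geq0}$-equicontinuous, and since $(X,\gamma)$ is a Mazur space, \prettyref{rem:T(t)_equicont} (a) upgrades this to $\sigma(X,X^{\circ})$-$(T(t))_{t\geq0}$-equicontinuity. On such a set the relative $\sigma(X,X^{\circ})$- and $\sigma(X,X^{\bullet})$-topologies coincide by \prettyref{cor:weakly_bi_sun_relative_closed}, and hence the $\sigma(X,X^{\bullet})$-compactness of $R(\lambda,A)G$ forces its $\sigma(X,X^{\circ})$-compactness. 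The ``in particular'' statement then follows because $\re\lambda>\omega_{0}(T)$, respectively $\gamma$-continuity of $R(\lambda,A)$, forces $R(\lambda,A)^{\bullet}X^{\bullet}\subset D(A^{\bullet})\subset X^{\bullet}$ by \prettyref{prop:resolv_bullet} (a), respectively (b).

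Most of this is assembly of earlier results; the one genuinely delicate point is the passage between the two weak topologies in part (b): $R(\lambda,A)G$ is only immediately seen to be compact for the coarser topology $\sigma(X,X^{\bullet})$, and one must exploit that the resolvent applied to bounded sets yields $(T(t))_{t\geq0}$-equicontinuous sets, together with \prettyref{cor:weakly_bi_sun_relative_closed}, to obtain compactness for the finer $\sigma(X,X^{\circ})$. I would also be mildly careful in part (a) to use that $\sigma(X,X^{\bullet})$ is precisely the initial topology pulled back by $j$ from the weak$^{*}$ topology of ${X^{\bullet}}'$, so that the uniform boundedness principle applies cleanly.
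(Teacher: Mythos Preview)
Your proposal is correct and follows essentially the same route as the paper's proof: part (a) via $j$ and the uniform boundedness principle, part (b) by $\sigma(X,X^{\bullet})$-continuity of $R(\lambda,A)$ from the hypothesis $R(\lambda,A)^{\bullet}X^{\bullet}\subset X^{\bullet}$, then upgrading compactness via \prettyref{prop:resolv_equicont}, \prettyref{rem:T(t)_equicont} (a) and \prettyref{cor:weakly_bi_sun_relative_closed}, with the ``in particular'' clause handled by \prettyref{prop:resolv_bullet}. The paper's argument is the same, only more tersely written.
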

\begin{proof}
(a) Let $G\subset X$ be $\sigma(X,X^{\bullet})$-compact. We may regard $G$ as a subset of ${X^{\bullet}}'$
via the the canonical map $j\colon X\to {X^{\bullet}}'$ from \prettyref{cor:embed_bi_sun_dual}. 
Then $G$ is $\sigma({X^{\bullet}}',X^{\bullet})$-compact and thus $\|\cdot\|_{{X^{\bullet}}'}$-bounded 
by the uniform boundedness principle, implying the $\|\cdot\|$-boundedness by \prettyref{cor:embed_bi_sun_dual}.

(b) The resolvent map $R(\lambda,A)$ is $\sigma(X,X^{\bullet})$-continuous since 
$R(\lambda,A)^{\bullet}X^{\bullet}\subset X^{\bullet}$ by assumption and 
\[
\langle x^{\bullet}\circ R(\lambda,A), x\rangle = \langle R(\lambda,A)^{\bullet}x^{\bullet}, x\rangle
\]
for all $x^{\bullet}\in X^{\bullet}$ and $x\in X$. So $R(\lambda,A)G$ is $\sigma(X,X^{\bullet})$-compact. 
Due to the $\|\cdot\|$-boundedness of $G$ by part (a), \prettyref{prop:resolv_equicont} 
and \prettyref{rem:T(t)_equicont} (a) we have that $R(\lambda,A)G$ is
$\sigma(X,X^{\circ})$-$(T(t))_{t\geq 0}$-equicontinuous. We conclude 
that $R(\lambda,A)G$ is $\sigma(X,X^{\circ})$-compact by \prettyref{cor:weakly_bi_sun_relative_closed}. 

The rest of statement (b) is a consequence of \prettyref{prop:resolv_bullet} (a) and (b).
\end{proof}

\section{Bi-sun reflexivity}
\label{sect:reflexivity}

We recall from \prettyref{cor:embed_bi_sun_dual} that the canonical map 
$j\colon X\to {X^{\bullet}}'$ given by $\langle j(x), x^{\bullet}\rangle\coloneqq\langle x^{\bullet},x\rangle$ 
is injective and $j(X_{\operatorname{cont}})= X^{\bullet\bullet}\cap j(X)$ holds (under the assumptions of 
\prettyref{cor:embed_bi_sun_dual}). This leads to the following generalisation of 
$\odot$-reflexivity w.r.t.~a semigroup (see \cite[p.~7]{vanneerven1992}). 

\begin{defn}
Let $(X,\|\cdot\|,\tau)$ be a sequentially complete d-consistent Mazur--Saks space
and $(T(t))_{t\geq 0}$ a bi-continuous semigroup on $X$. 
We say that $X$ is $\bullet$\emph{-reflexive (or bi-sun reflexive) w.r.t.}~$(T(t))_{t\geq 0}$ if 
$j(X_{\operatorname{cont}})= X^{\bullet\bullet}$.
\end{defn}

\begin{rem}
\begin{enumerate}
\item[(a)] Let $(X,\|\cdot\|)$ be a Banach space. For a $\|\cdot\|$-strongly continuous semigroup $(T(t))_{t\geq 0}$ on $X$ 
we have $X_{\operatorname{cont}}=X$ and $X^{\bullet\bullet}=X^{\odot\odot}$. Thus $X$ is 
$\bullet$-reflexive w.r.t.~$(T(t))_{t\geq 0}$ if and only if it is $\odot$-reflexive w.r.t.~$(T(t))_{t\geq 0}$.
\item[(b)] One might object to coining the property 
$j(X_{\operatorname{cont}})= X^{\bullet\bullet}$ by ``$\bullet$-reflexivity'', as it is not symmetric. 
However, our main point in studying this property lies in its value for describing the Favard space 
$Fav(T)$ and its relation to the generator $(A,D(A))$ of $(T(t))_{t\geq 0}$ (and by part (a), it is indeed a reasonable name for this property). 
\end{enumerate}
\end{rem} 

First, we study the relation between a bi-continuous semigroup and its restriction to its space of strong continuity with regard to 
(bi-)sun reflexivity.

\begin{prop}\label{prop:relation_sun_bi_sun}
Let $(X,\|\cdot\|,\tau)$ be a sequentially complete d-consistent Mazur--Saks space and 
$(T(t))_{t\geq 0}$ a bi-continuous semigroup on $X$. Then the following assertions hold:
\begin{enumerate}
\item[(a)] $T^{\bullet\bullet}j(x)=j(T(t)x)$ for all $t\geq 0$ and $x\in X_{\operatorname{cont}}$.
%\item[(b)] $D(A_{\mid X_{\operatorname{cont}}})=D(A_{\mid X_{\operatorname{cont}}}^{\odot\odot})\cap j(X_{\operatorname{cont}})$.
\item[(b)] The maps $\iota\colon X^{\bullet}\to X_{\operatorname{cont}}^{\odot}$, $\iota(x^{\bullet})\coloneqq 
x^{\bullet}_{\mid X_{\operatorname{cont}}}$, and $\kappa\colon X_{\operatorname{cont}}^{\odot\odot}\to X^{\bullet\bullet}$, 
$\kappa(y)\coloneqq y\circ\iota$, are well-defined, linear and continuous, and $\iota$ is injective. 
In particular, we have the continuous embeddings $X^{\bullet}\hookrightarrow X_{\operatorname{cont}}^{\odot}$ 
and $(X_{\operatorname{cont}}^{\odot\odot}/\ker (\kappa))\hookrightarrow  X^{\bullet\bullet}$.
\item[(c)] $\kappa\circ j_{0}= j$ on $X_{\operatorname{cont}}$ where 
$j_{0}\colon X_{\operatorname{cont}}\to {X_{\operatorname{cont}}^{\odot}}'$ is the canonical map given 
by $\langle j_{0}(x), x^{\odot}\rangle\coloneqq\langle x^{\odot},x\rangle$.
\item[(d)] If $X$ is $\bullet$-reflexive w.r.t.~$(T(t))_{t\geq 0}$, then $\kappa$ is surjective.
\item[(e)] If $\iota$ is surjective, then $\kappa$ is injective.
\end{enumerate}
\end{prop}
\begin{proof}
(a) We note that $j(X_{\operatorname{cont}})\subset X^{\bullet\bullet}$ 
by \prettyref{cor:embed_bi_sun_dual} and $T(t)X_{\operatorname{cont}}\subset X_{\operatorname{cont}}$ 
for all $t\geq 0$ by \prettyref{thm:generator} (g), 
which implies
\[
 \langle T^{\bullet\bullet}(t)j(x),x^{\bullet} \rangle
=\langle j(x),T^{\bullet}(t)x^{\bullet} \rangle
=\langle T^{\bullet}(t)x^{\bullet},x \rangle
=\langle x^{\bullet},T(t)x \rangle
=\langle j(T(t)x),x^{\bullet}\rangle
\]
for any $t\geq 0$, $x\in X_{\operatorname{cont}}$ and $x^{\bullet}\in X^{\bullet}$. 

(b) Due \prettyref{thm:generator} (g) and \prettyref{rem:bi_cont_dual} 
$X_{\operatorname{cont}}$ is sequentially $\gamma$-dense in $X$ and $X^{\circ}=X_{\operatorname{seq-}\gamma}'$.
Thus the continuous linear map $\iota_{0}\colon (X^{\circ},\|\cdot\|_{X^{\circ}})\to 
(X_{\operatorname{cont}}',\|\cdot\|_{X_{\operatorname{cont}}'})$, 
$x^{\circ}\mapsto x^{\circ}_{\mid X_{\operatorname{cont}}}$, 
is injective and we note that $\iota={\iota_{0}}_{\mid X^{\bullet}}$. From $T^{\circ}(t)x^{\circ}=T'(t)x^{\circ}$ for all $t\geq 0$ 
and $x^{\circ}\in X^{\circ}$ it follows $\iota_{0}(X^{\bullet})\subset X_{\operatorname{cont}}^{\odot}$.
Thus we get $y\circ\iota\in{X^{\bullet}}'$ for any 
$y\in {X_{\operatorname{cont}}^{\odot}}'$ and 
\begin{align*}
 \langle {T^{\bullet}}'(t)(y\circ\iota),x^{\bullet} \rangle
&=\langle y,T^{\bullet}(t)x^{\bullet} \rangle
 =\langle y,T^{\circ}(t)x^{\bullet} \rangle
 =\langle y,T'(t)x^{\bullet} \rangle
 =\langle y,(T_{\mid X_{\operatorname{cont}}})^{\odot}(t)x^{\bullet} \rangle\\
&=\langle {(T_{\mid X_{\operatorname{cont}}})^{\odot}}'(t)y,x^{\bullet} \rangle
\end{align*}
for any $t\geq 0$, $y\in {X_{\operatorname{cont}}^{\odot}}'$ and $x^{\bullet}\in X^{\bullet}$, 
implying $\kappa(y)\in X^{\bullet\bullet}$ for all $y\in X_{\operatorname{cont}}^{\odot\odot}$. 
Further, the estimate $\|\iota(x^{\bullet})\|_{X_{\operatorname{cont}}^{\odot}}\leq \|x^{\bullet}\|_{X^{\bullet}}$ for all 
$x^{\bullet}\in X^{\bullet}$ yields $\|\kappa(y)\|_{X^{\bullet\bullet}}\leq \|y\|_{X_{\operatorname{cont}}^{\odot\odot}}$ 
for all $y\in X_{\operatorname{cont}}^{\odot\odot}$, which finishes the proof of part (b).

(c) We note that $j_{0}(X_{\operatorname{cont}}^{\phantom{\odot\odot}})\subset X_{\operatorname{cont}}^{\odot\odot}$ 
by \cite[p.~7]{vanneerven1992}. Let $x\in X_{\operatorname{cont}}$. Then we have $j_{0}(x)\in  X_{\operatorname{cont}}^{\odot\odot}$ 
and
\[
\langle \kappa(j_{0}(x)), x^{\bullet}\rangle
=\langle j_{0}(x),\iota(x^{\bullet})\rangle
=\langle \iota(x^{\bullet}),x\rangle
=\langle x^{\bullet},x\rangle
=\langle j(x),x^{\bullet}\rangle
\]
for all $x^{\bullet}\in X^{\bullet}$.

(d) This follows from (c) since $X^{\bullet\bullet}=j(X_{\operatorname{cont}})$ and 
$j_{0}(X_{\operatorname{cont}}^{\phantom{\odot\odot}})\subset X_{\operatorname{cont}}^{\odot\odot}$.

(e) If $\iota$ is surjective, then $\iota(X^{\bullet})= X_{\operatorname{cont}}^{\odot}$ and thus $\ker (\kappa)=\{0\}$. 
\end{proof}

\begin{prop}\label{prop:relation_reflexivities_2}
Let $(X,\|\cdot\|,\tau)$ be a sequentially complete d-consistent Mazur--Saks space and 
$(T(t))_{t\geq 0}$ a bi-continuous semigroup on $X$.
If $X$ is $\bullet$-reflexive w.r.t.~$(T(t))_{t\geq 0}$ and $\iota(X^{\bullet})= X_{\operatorname{cont}}^{\odot}$ 
with $\iota\colon X^{\bullet}\to X_{\operatorname{cont}}^{\odot}$ from \prettyref{prop:relation_sun_bi_sun} (b), 
then $X_{\operatorname{cont}}$ is $\odot$-reflexive w.r.t.~$(T(t)_{\mid X_{\operatorname{cont}}})_{t\geq 0}$, 
the map $\kappa\colon X_{\operatorname{cont}}^{\odot\odot}\to X^{\bullet\bullet}$ from 
\prettyref{prop:relation_sun_bi_sun} (b) is a topological isomorphism and 
\[
 \overline{D(A)}^{\|\cdot\|}
=X_{\operatorname{cont}}^{\phantom{\odot\odot}}
=X_{\operatorname{cont}}^{\odot\odot}
=X^{\bullet\bullet}
\]
where we identified $X_{\operatorname{cont}}^{\odot\odot}$ 
with a subspace of $X^{\bullet\bullet}$ via $\kappa$ and $X_{\operatorname{cont}}$ with a subspace
of $X^{\bullet\bullet}$ via the canonical map $j\colon X\to {X^{\bullet}}'$, which fulfils 
$j=\kappa\circ j_{0}$ with the canonical map $j_{0}\colon X_{\operatorname{cont}}\to {X_{\operatorname{cont}}^{\odot}}'$ 
by \prettyref{prop:relation_sun_bi_sun} (c).
\end{prop}
\begin{proof} 
First, we show that $j_{0}(X_{\operatorname{cont}}^{\phantom{\odot\odot}})=X_{\operatorname{cont}}^{\odot\odot}$. 
Let $y\in X_{\operatorname{cont}}^{\odot\odot}$. Then there is $x\in X_{\operatorname{cont}}$ such that 
$\kappa(y)=j(x)$ since $X$ is $\bullet$-reflexive. 
For any $x^{\odot}\in X_{\operatorname{cont}}^{\odot}$ there exists $x^{\bullet}\in X^{\bullet}$ such that 
$\iota(x^{\bullet})=x^{\odot}$ because $\iota(X^{\bullet})=X_{\operatorname{cont}}^{\odot}$. Hence we get 
\begin{align*}
 \langle j_{0}(x), x^{\odot}\rangle
&=\langle x^{\odot},x\rangle
 =\langle\iota(x^{\bullet}),x\rangle
 =\langle x^{\bullet},x\rangle
 =\langle j(x), x^{\bullet}\rangle
 =\langle\kappa(y),x^{\bullet}\rangle
 =\langle y, \iota(x^{\bullet})\rangle\\
&=\langle y, x^{\odot}\rangle
\end{align*}
for all $x^{\odot}\in X_{\operatorname{cont}}^{\odot}$, implying that 
$y=j_{0}(x)\in j_{0}(X_{\operatorname{cont}}^{\phantom{\odot\odot}})$ and so the $\odot$-reflexivity of $X_{\operatorname{cont}}$ 
since $j_{0}(X_{\operatorname{cont}}^{\phantom{\odot\odot}})\subset X_{\operatorname{cont}}^{\odot\odot}$ always holds.

Second, it follows from the open mapping theorem and \prettyref{prop:relation_sun_bi_sun} (b), (d) and (e) 
that $\kappa$ is a topological isomorphism. 
The observation $\overline{D(A)}^{\|\cdot\|}=X_{\operatorname{cont}}^{\phantom{\odot\odot}}$ by \prettyref{thm:generator} (g) finishes 
the proof.
\end{proof}

The next proposition generalises \cite[Corollary 1.3.2, p.~6]{vanneerven1992}, namely, that 
a reflexive Banach space $X$ is $\odot$-reflexive.

\begin{prop}\label{prop:relation_reflexivities_1}
Let $(X,\|\cdot\|,\tau)$ be a sequentially complete d-consistent Mazur--Saks space. 
If $(X,\gamma)$ is semi-reflexive, then $X^{\bullet}=X^{\circ}=(X,\gamma)'$, the canonical map $j\colon X\to {X^{\bullet}}'$ 
is surjective and $X$ is $\bullet$-reflexive w.r.t.~any bi-continuous semigroup $(T(t))_{t\geq 0}$ on $X$.
\end{prop}
\begin{proof}
The space $X^{\circ}=(X,\gamma)'$ is a closed subspace of the Banach space $(X',\|\cdot\|_{X'})$ 
by \prettyref{rem:bi_cont_dual}. Due to $(X,\gamma)$ being semi-reflexive, \cite[I.1.18 Proposition (i), p.~15]{cooper1978} 
and the Mackey--Arens theorem we have 
\[
(X^{\circ},\|\cdot\|_{X^{\circ}})'=X=(X^{\circ},\sigma(X^{\circ},X))'
\]
where $\|\cdot\|_{X^{\circ}}$ is the restriction of $\|\cdot\|_{X'}$ to $X^{\circ}$
We deduce that for the bi-sun dual $X^{\bullet}$ w.r.t.~a $\tau$-bi-continuous semigroup 
$(T(t))_{t\geq 0}$ it holds
\[
X^{\bullet}=\overline{X^{\bullet}}^{\|\cdot\|_{X^{\circ}}}=\overline{X^{\bullet}}^{\sigma(X^{\circ},X)}=X^{\circ}
\]
by \cite[8.2.5 Proposition, p.~149]{jarchow1981} because $X^{\bullet}$ is a $\|\cdot\|_{X^{\circ}}$-closed, 
$\sigma(X^{\circ},X)$-dense linear subspace of $X^{\circ}$ by \prettyref{cor:bi_sun_dual}. It follows that 
${X^{\bullet}}'={X^{\circ}}'=X$ since $(X,\gamma)$ is semi-reflexive, implying 
\[
X_{\operatorname{cont}}=X^{\bullet\bullet}\cap X=X^{\bullet\bullet}\cap {X^{\bullet}}'=X^{\bullet\bullet}
\]
by \prettyref{cor:embed_bi_sun_dual} where we identified $X_{\operatorname{cont}}$ and $X$ with subspaces 
of ${X^{\bullet}}'$ via $j$.
\end{proof}

Let $(X,\|\cdot\|)$ be a Banach space, $\tau$ a Hausdorff locally convex topology on $X$ which is 
coarser than the $\|\cdot\|$-topology, and let $\gamma\coloneqq\gamma(\|\cdot\|,\tau)$ be the mixed topology. 
Then the space $(X,\gamma)$ is semi-reflexive if and only if $B_{\|\cdot\|}$ is $\sigma(X,(X,\tau)')$-compact 
by \cite[I.1.21 Corollary, p.~16]{cooper1978}. Moreover, $(X,\gamma)$ is a semi-Montel space, 
thus semi-reflexive, if and only if $B_{\|\cdot\|}$ is $\tau$-compact by 
\cite[I.1.13 Proposition, p.~11]{cooper1978}. This second condition is fulfilled 
for the triple $(\mathrm{C}_{\operatorname{b}}(\Omega),\|\cdot\|_{\infty},\tau_{\operatorname{co}})$ 
from \prettyref{ex:mackey_mazur_examples} (b) if, in addition, 
$\Omega$ is discrete by \cite[II.1.24 Remark 4), p.~88--89]{cooper1978}. 
The first condition is fulfilled for the Saks spaces from \prettyref{ex:mackey_mazur_examples} (c), (d), (e) and (f). 
It is fulfilled in example (c) by \cite[V.2.6 Proposition, p.~234]{cooper1978} and in the latter examples 
since $(X_{0}',\mu(X_{0}',X_{0}))''=X_{0}'$ by the Mackey--Arens theorem and 
$(\mathcal{L}(H),\beta_{\operatorname{sot}^{\ast}})''=\mathcal{N}(H)'=\mathcal{L}(H)$ 
for any Banach space $X_{0}$ and any separable Hilbert space $H$. In combination with 
\prettyref{thm:generator} (g) and \prettyref{prop:relation_reflexivities_1} 
we obtain the following.

\begin{cor}\label{cor:relation_reflexivities}
$(X,\gamma)$ is a semi-reflexive Mackey--Mazur space where $\gamma\coloneqq\gamma(\|\cdot\|,\tau)$ is the mixed topology, 
$X$ is $\bullet$-reflexive w.r.t.~any bi-continuous semigroup $(T(t))_{t\geq 0}$ on $X$ 
with generator $(A,D(A))$ and 
\[
X^{\bullet}=X^{\circ}=(X,\gamma)'
\quad\text{as well as}\quad
 \overline{D(A)}^{\|\cdot\|}
=X_{\operatorname{cont}}
=X^{\bullet\bullet}
\]
for each of the triples $(X,\|\cdot\|,\tau)$ from \prettyref{ex:mackey_mazur_examples} (c), (d), (e), (f) and
\begin{enumerate}
\item[(a)] if $(X,\|\cdot\|)$ is reflexive,
\item[(b)] if $\Omega$ is discrete.
\end{enumerate}
\end{cor}

Due to \prettyref{ex:resolv_bullet}, \prettyref{prop:relation_reflexivities_2} and 
\prettyref{cor:relation_reflexivities} (b) we have the following example.

\begin{exa}\label{ex:bi_sun_reflexive} 
Let $q\colon \N\to\C\setminus\{0\}$ such that $\sup_{n\in\N}\re q(n)<\infty$ and $(\tfrac{1}{q(n)})_{n\in\N}\in c_{0}$, and 
let $(T(t))_{t\geq 0}$ be the bi-continuous multiplication semigroup on 
$(\ell^{\infty},\|\cdot\|_{\infty},\mu(\ell^{\infty},\ell^{1}))$ from \prettyref{ex:resolv_bullet} given by 
\[
T(t)x\coloneqq(\e^{q(n)t}x_{n})_{n\in\N},\quad x\in\ell^{\infty},\, t\geq 0,
\]
with generator
$
A\colon D(A)\to \ell^{\infty},\;Ax =qx,
$
on the domain 
\[
D(A)=\{x\in\ell^{\infty}\;|\;(q(n)x_{n})_{n\in\N}\in\ell^{\infty}\}. 
\]
Due to \cite[p.~354]{budde2019} we have $(\ell^{\infty})_{\operatorname{cont}}=c_{0}$ 
since $(\tfrac{1}{q(n)})_{n\in\N}\in c_{0}$.
Further, we note that $\ell^{\infty}=\mathrm{C}_{\operatorname{b}}(\N)$ is $\bullet$-reflexive 
w.r.t.~$(T(t))_{t\geq 0}$, $c_{0}$ is $\odot$-reflexive w.r.t.~$(T(t)_{\mid c_{0}})_{t\geq 0}$ and 
\[
(\ell^{\infty})^{\bullet}=(\ell^{\infty})^{\circ}=\mathrm{M}_{\operatorname{t}}(\N)=\ell^{1}
\;\;\text{as well as}\;\;
 \overline{D(A)}^{\|\cdot\|_{\infty}}
=(\ell^{\infty})_{\operatorname{cont}}^{\phantom{\odot\odot}}
=c_{0}
=c_{0}^{\odot\odot}
=(\ell^{\infty})^{\bullet\bullet}
\]
since $c_{0}^{\odot}=\ell^{1}=(\ell^{\infty})^{\bullet}$ by \cite[Chap.~I, 4.11 Proposition, p.~32]{engel_nagel2000}.
\end{exa}

In \cite[Example 1.3.10, p.~9]{vanneerven1992} it is observed that 
$c_{0}$ is $\odot$-reflexive w.r.t.~$(T(t)_{\mid c_{0}})_{t\geq 0}$ for $q(n)\coloneqq-n$, $n\in\N$.

\section{The Favard space}
\label{sect:favard}

We begin this section with the definition of the Favard space.

\begin{defn}\label{def:favard}
Let $(X,\|\cdot\|,\tau)$ be a sequentially complete Saks space and $(T(t))_{t\geq 0}$ a 
bi-continuous semigroup on $X$. Then the \emph{Favard space (class)} of $(T(t))_{t\geq 0}$ is 
defined by 
\[
Fav(T)\coloneqq\{x\in X\;|\;\limsup_{t\to 0\rlim}\frac{1}{t}\|T(t)x-x\|<\infty\}.
\]
\end{defn}

\begin{rem}\label{rem:favard}
Let $(X,\|\cdot\|,\tau)$ be a sequentially complete Saks space and $(T(t))_{t\geq 0}$ a 
bi-continuous semigroup on $X$.
\begin{enumerate}
\item[(a)] It is obvious from the definition of the generator $(A,D(A))$ that $D(A)\subset Fav(T)$. 
\item[(b)] From $\|T(t)x-x\|=t\tfrac{1}{t}\|T(t)x-x\|$ for all $t>0$ and $x\in X$, we obtain 
$Fav(T)\subset X_{\operatorname{cont}}$ where $X_{\operatorname{cont}}$ is the space of $\|\cdot\|$-strong 
continuity of $(T(t))_{t\geq 0}$ from \prettyref{thm:generator} (g).
\end{enumerate}
\end{rem}

Our goal is to characterise those bi-continuous semigroups on $X$ for which $Fav(T)=D(A)$ holds. 
A class of bi-continuous semigroups for which this holds are the dual semigroups of norm-strongly 
continuous semigroups. 

\begin{exa}\label{ex:favard_dual_semigroup}
Let $(X,\|\cdot\|)$ be a Banach space and $(S(t))_{t\geq 0}$ a $\|\cdot\|$-strongly continuous semigroup on $X$ 
with generator $(A,D(A))$. 
Then $(S'(t))_{t\geq 0}$ is a bi-continuous semigroup semigroup on $(X',\|\cdot\|_{X'},\sigma(X',X))$ 
by \cite[Proposition 3.18, p.~78]{kuehnemund2001} with generator $(A',D(A'))$ and 
\[
 Fav(S')=D(A')=Fav(S^{\bullet})=Fav(S^{\odot})
\]
by \cite[Theorem 1.2.3, p.~4]{vanneerven1992}, \cite[Theorem 3.2.1, p.~54]{vanneerven1992} 
and \cite[Corollary 3.2.2, p.~55]{vanneerven1992}. 
\end{exa}

We note the following generalisation of \cite[Chap.~II, Proposition, Corollary, p.~60--61]{engel_nagel2000} 
for restrictions of bi-continuous semigroups which helps to explain when the equation $Fav(T)=D(A)$ 
is inherited by restricted bi-continuous semigroups.

\begin{prop}\label{prop:domain_part_generator}
Let $(X,\|\cdot\|,\tau)$ be a sequentially complete Saks space, 
$(T(t))_{t\geq 0}$ a bi-continuous semigroup on $X$ with generator $(A,D(A))$,  
$Y$ a $(T(t))_{t\geq 0}$-invariant sequentially $\gamma$-closed linear subspace of $X$ 
and denote by $\|\cdot\|_{Y}$ and $\tau_{Y}$ the restrictions of $\|\cdot\|$ and $\tau$ to $Y$, respectively. 
Then the following assertions hold:
\begin{enumerate}
\item[(a)] The triple $(Y,\|\cdot\|_{Y},\tau_{Y})$ is a sequentially complete Saks space and 
$(T(t)_{\mid Y})_{t\geq 0}$ is a bi-continuous semigroup on $(Y,\|\cdot\|_{Y},\tau_{Y})$. 
\item[(b)] The generator of $(T(t)_{\mid Y})_{t\geq 0}$ is the part $A_{\mid Y}$ of $A$ in $Y$, i.e. 
\[
A_{\mid Y}y=Ay,\quad y\in Y,
\]
with domain
\[
D(A_{\mid Y})=D(A)\cap Y.
\]
\end{enumerate}
\end{prop}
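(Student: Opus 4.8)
The plan is to verify the three items of \prettyref{ass:standard} for $(Y,\|\cdot\|_{Y},\tau_{Y})$ one at a time, then check the four conditions of \prettyref{defn:bi_continuous} for $(T(t)_{\mid Y})_{t\geq 0}$, and finally identify its generator with the part of $A$ in $Y$. Throughout, the workhorse is \prettyref{rem:mixed_top} (e): a sequence is $\gamma$-convergent if and only if it is $\tau$-convergent and $\|\cdot\|$-bounded.

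First I would note that $Y$ is automatically $\|\cdot\|$-closed: a $\|\cdot\|$-convergent sequence in $Y$ is $\|\cdot\|$-bounded and $\tau$-convergent, hence $\gamma$-convergent, so its limit lies in $Y$ by sequential $\gamma$-closedness; thus $(Y,\|\cdot\|_{Y})$ is a Banach space, and $\tau_{Y}$, being the restriction of a Hausdorff locally convex topology coarser than the $\|\cdot\|$-topology, inherits \prettyref{ass:standard} (i). For (ii), a $\|\cdot\|_{Y}$-bounded $\tau_{Y}$-Cauchy sequence $(y_{n})$ in $Y$ is a $\|\cdot\|$-bounded $\tau$-Cauchy sequence in $X$, hence $\tau$-converges to some $x\in X$ by \prettyref{ass:standard} (ii); it is then $\gamma$-convergent to $x$, so $x\in Y$, and $(y_{n})$ $\tau_{Y}$-converges in $Y$. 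For (iii), restriction maps the unit ball of $(X,\tau)'$ into that of $(Y,\tau_{Y})'$, so $\|y\|_{Y}=\|y\|=\sup_{\|x'\|_{X'}\leq 1}|x'(y)|\leq\sup_{\|y'\|_{Y'}\leq 1}|y'(y)|\leq\|y\|_{Y}$, giving the norming property. That $(T(t)_{\mid Y})_{t\geq 0}$ is a semigroup on $Y$ is the $(T(t))_{t\geq 0}$-invariance of $Y$; exponential boundedness follows from $\|T(t)_{\mid Y}\|_{\mathcal{L}(Y)}\leq\|T(t)\|_{\mathcal{L}(X)}$; and $\tau_{Y}$-strong continuity and local bi-equicontinuity are inherited at once, since $\tau_{Y}$ is the subspace topology and all orbits and sequences involved stay inside $Y$. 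This settles (a).

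For (b), let $(B,D(B))$ denote the generator of $(T(t)_{\mid Y})_{t\geq 0}$. I would first show $D(A)\cap Y=D(A_{\mid Y})$, i.e.\ that $y\in D(A)\cap Y$ implies $Ay\in Y$: using the mixed-topology description of the generator, the difference quotients $\tfrac1t(T(t)y-y)$ lie in $Y$ and $\gamma$-converge to $Ay$ as $t\to 0\rlim$, so along any sequence $t_{n}\downarrow 0$ they form a $\gamma$-convergent sequence in $Y$, whence $Ay\in Y$ by sequential $\gamma$-closedness. Then, for $y\in D(A)\cap Y$, the quotients lie in $Y$, $\tau_{Y}$-converge to $Ay\in Y$, and satisfy $\sup_{t\in(0,1]}\tfrac1t\|T(t)y-y\|<\infty$, so $y\in D(B)$ with $By=Ay=A_{\mid Y}y$. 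Conversely, for $y\in D(B)$ the defining $\tau_{Y}$-limit and the uniform bound in $Y$ are precisely the $\tau$-limit and uniform bound in $X$, so $y\in D(A)$, hence $y\in D(A)\cap Y=D(A_{\mid Y})$ and $By=Ay=A_{\mid Y}y$.

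The argument is essentially bookkeeping once \prettyref{rem:mixed_top} (e) is applied systematically; the only genuinely delicate point, the mild main obstacle, is the passage from the limit $t\to 0\rlim$ in the definition of the generator to a statement about sequences, which is exactly what makes the sequential $\gamma$-closedness of $Y$ (rather than mere $\|\cdot\|$-closedness) the appropriate hypothesis for concluding $Ay\in Y$.
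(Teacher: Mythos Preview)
Your proof is correct. Part (a) is handled as in the paper, only with more detail spelled out; the key observation that sequential $\gamma$-closedness implies $\|\cdot\|$-closedness (hence $(Y,\|\cdot\|_{Y})$ is Banach) is the same.

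For part (b) your route differs from the paper's in one step. Both proofs show $D(B)\subset D(A)\cap Y$ directly from the definition, and both use sequential $\gamma$-closedness of $Y$ to get $Ay\in Y$ for $y\in D(A)\cap Y$, i.e.\ $D(A)\cap Y=D(A_{\mid Y})$. For the remaining inclusion $D(A_{\mid Y})\subset D(B)$, however, the paper follows the classical Engel--Nagel pattern: it picks $\lambda>\max(\omega_{0}(T),\omega_{0}(T_{\mid Y}))$, notes that the Laplace-transform representation gives $R(\lambda,B)y=R(\lambda,A)y$ for $y\in Y$, and then writes $x=R(\lambda,A)(\lambda-A)x=R(\lambda,B)(\lambda-A)x\in D(B)$ for $x\in D(A_{\mid Y})$. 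You instead argue directly: once $Ay\in Y$ is known, the difference quotients lie in $Y$, $\tau_{Y}$-converge to $Ay$, and satisfy the uniform bound, so $y\in D(B)$ by definition. Your argument is more elementary and avoids invoking \prettyref{thm:generator} (e); the paper's resolvent argument, on the other hand, is the one that transports verbatim to settings where the limit object $Ay$ is not yet known to lie in the subspace.
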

\begin{proof}
(a) The triple $(Y,\|\cdot\|_{Y},\tau_{Y})$ is a sequentially complete Saks space because $(X,\|\cdot\|,\tau)$ 
is a sequentially complete Saks space and $Y$ a sequentially $\gamma$-closed, in particular $\|\cdot\|$-closed, 
linear subspace of $X$. 
Since $(T(t))_{t\geq 0}$ is a bi-continuous semigroup on $(X,\|\cdot\|,\tau)$, and $Y$ is $(T(t))_{t\geq 0}$-invariant, it 
follows from \cite[Definition 3, p.~207]{kuehnemund2003} that $(T(t)_{\mid Y})_{t\geq 0}$ is a bi-continuous semigroup on $(Y,\|\cdot\|_{Y},\tau_{Y})$.

(b) Let $(C,D(C))$ be the generator of $(T(t)_{\mid Y})_{t\geq 0}$. If $y\in D(C)\subset Y$, then 
\[
 \sup_{t\in (0,1]}\Bigl\|\frac{T(t)y-y}{t}\Bigr\|
=\sup_{t\in (0,1]}\Bigl\|\frac{T(t)_{\mid Y}y-y}{t}\Bigr\|_{Y}<\infty
\] 
and 
\[
Y\ni Cy
=\tau_{Y}\text{-}\lim_{t\to 0\rlim}\frac{T(t)_{\mid Y}y-y}{t}
=\tau\text{-}\lim_{t\to 0\rlim}\frac{T(t)y-y}{t}
=Ay
\]
which yields $D(C)\subset (D(A)\cap Y)$ and thus $D(C)\subset D(A_{\mid Y})$. 
For the converse inclusion choose $\lambda>\max(\omega_{0}(T),\omega_{0}(T_{\mid Y}))$ and note that 
\[
R(\lambda,C)y=\int_{0}^{\infty}\e^{-\lambda s}T(s)y\d s=R(\lambda,A)y, \quad y\in Y,
\]
by \prettyref{thm:generator} (e) and part (a). For $x\in D(A_{\mid Y})$ this yields
\[
x=R(\lambda,A)(\lambda-A)x=R(\lambda,C)(\lambda-A)x\in D(C)
\]
and therefore $D(A_{\mid Y})\subset D(C)$.

We have $D(A_{\mid Y})\subset (D(A)\cap Y)$ by definition. Let $x\in D(A)\cap Y$. Then $T(t)x\in Y$ for all $t\geq 0$ 
and 
\[
X\ni Ax
=\tau\text{-}\lim_{t\to 0\rlim}\frac{T(t)x-x}{t}
=\gamma\text{-}\lim_{t\to 0\rlim}\frac{T(t)x-x}{t},
\]
which implies $Ax\in Y$ as $Y$ is sequentially $\gamma$-closed in $X$. Hence we have $x\in D(A_{\mid Y})$ and so 
$(D(A)\cap Y)\subset D(A_{\mid Y})$.
\end{proof}

\begin{cor}\label{cor:domain_part_generator}
Let $(X,\|\cdot\|,\tau)$ be a sequentially complete Saks space
and $(T(t))_{t\geq 0}$ a bi-continuous semigroup on $X$ with generator $(A,D(A))$. 
If $Y$ is a $(T(t))_{t\geq 0}$-invariant sequentially $\gamma$-closed linear subspace of $X$ and 
$Fav(T)=D(A)$, then $Fav(T_{\mid Y})=Fav (T)\cap Y=D(A_{\mid Y})$.
\end{cor}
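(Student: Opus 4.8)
The plan is to combine \prettyref{prop:domain_part_generator} with the elementary inclusions recorded in \prettyref{rem:favard}, together with one extra ingredient: that the Favard space of the restriction equals $Fav(T)\cap Y$. First I would apply \prettyref{prop:domain_part_generator}(a) to the subspace $Y$: since $Y$ is $(T(t))_{t\ge0}$-invariant and sequentially $\gamma$-closed, the triple $(Y,\|\cdot\|_{Y},\tau_{Y})$ again satisfies \prettyref{ass:standard} and $(T(t)_{\mid Y})_{t\ge0}$ is a $\tau_{Y}$-bi-continuous semigroup on $Y$ whose generator is the part $A_{\mid Y}$ of $A$ in $Y$, with $D(A_{\mid Y})=D(A)\cap Y$ by part (b). In particular $Fav(T_{\mid Y})$ and $D(A_{\mid Y})$ are well-defined objects in the sense of \prettyref{def:favard} and \prettyref{rem:favard}.

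Next I would establish the chain $Fav(T_{\mid Y}) = Fav(T)\cap Y$. The inclusion $Fav(T_{\mid Y})\subset Fav(T)\cap Y$ is immediate: any $y\in Fav(T_{\mid Y})$ lies in $Y$ by definition, and since $\|T(t)_{\mid Y}y-y\|_{Y}=\|T(t)y-y\|$ for all $t>0$, the $\limsup$ condition for $T_{\mid Y}$ at $y$ is literally the same as that for $T$. Conversely, if $y\in Fav(T)\cap Y$, then $y\in Y$ and $T(t)_{\mid Y}y-y=T(t)y-y$, so again $\limsup_{t\to0\rlim}\tfrac1t\|T(t)_{\mid Y}y-y\|_{Y}=\limsup_{t\to0\rlim}\tfrac1t\|T(t)y-y\|<\infty$, giving $y\in Fav(T_{\mid Y})$. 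Hence the norm identity $\|T(t)_{\mid Y}y-y\|_{Y}=\|T(t)y-y\|$ makes this step purely a matter of unwinding definitions.

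Finally I would assemble the equality. Assuming $Fav(T)=D(A)$, we get
\[
Fav(T_{\mid Y}) = Fav(T)\cap Y = D(A)\cap Y = D(A_{\mid Y}),
\]
where the first equality is the step just proved, the second uses the hypothesis, and the third is \prettyref{prop:domain_part_generator}(b). This also records the intermediate identity $Fav(T_{\mid Y})=Fav(T)\cap Y$ asserted in the statement. I do not anticipate a genuine obstacle here; the only point that requires the structural input (rather than bookkeeping) is that $(T(t)_{\mid Y})_{t\ge0}$ is again bi-continuous with generator $A_{\mid Y}$ and $D(A_{\mid Y})=D(A)\cap Y$, and this is exactly what \prettyref{prop:domain_part_generator} supplies — in particular it is there that sequential $\gamma$-closedness of $Y$ is used (to see $Ax\in Y$ for $x\in D(A)\cap Y$).
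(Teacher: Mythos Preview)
Your proposal is correct and follows essentially the same approach as the paper: both use \prettyref{prop:domain_part_generator} to obtain $D(A_{\mid Y})=D(A)\cap Y$, observe that $Fav(T_{\mid Y})=Fav(T)\cap Y$ from the definition, and then chain these with the hypothesis $Fav(T)=D(A)$. The paper's proof is simply terser, recording the identity $Fav(T_{\mid Y})=Fav(T)\cap Y$ as ``clearly'' holding, whereas you spell out the norm identity $\|T(t)_{\mid Y}y-y\|_{Y}=\|T(t)y-y\|$ explicitly.
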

\begin{proof}
The inclusion $D(A_{\mid Y})\subset Fav(T_{\mid Y})$ always holds and we have $D(A_{\mid Y})=D(A)\cap Y$ 
by \prettyref{prop:domain_part_generator}. Clearly, $Fav(T_{\mid Y})=Fav (T)\cap Y$ holds as well. 
Let $x\in Fav(T_{\mid Y})$. Then $x\in (D(A)\cap Y)=D(A_{\mid Y})$ since $Fav(T)=D(A)$. 
\end{proof}

Next, we present a proposition that extends \cite[Theorem 3.2.3, p.~55]{vanneerven1992} to the bi-continuous setting.

\begin{prop}\label{prop:favard}
Let $(X,\|\cdot\|,\tau)$ be a sequentially complete d-consistent Mazur--Saks space
and $(T(t))_{t\geq 0}$ a bi-continuous semigroup on $X$ with generator $(A,D(A))$.  
Then $Fav(T)=D({A^{\bullet}}')\cap X_{\operatorname{cont}}=D({A^{\bullet}}')\cap X$. 
\end{prop}
\begin{proof}
Due to \prettyref{cor:bi_sun_dual} $(A^{\bullet},D(A^{\bullet}))$ is the generator of the 
$\|\cdot\|_{X'}$-continuous semigroup $(T^{\bullet}(t))_{t\geq 0}$ on $X^{\bullet}$. 
Hence it follows from \cite[Corollary 2.1.5 (b), p.~92]{butzer1967} with $X_{0}^{\ast}=X^{\bullet\bullet}$ and
$({T^{\bullet\bullet}}(t))_{t\geq 0}=({T^{\bullet}}'(t)_{\mid X^{\bullet\bullet}})_{t\geq 0}$
by \prettyref{rem:bi_sun_sun} that $Fav(T^{\bullet\bullet})=D({A^{\bullet}}')$. The definitions of the Favard 
space and of $T^{\bullet\bullet}$ yield that 
\[
Fav(T)\cap X^{\bullet\bullet}=Fav(T^{\bullet\bullet})\cap X
\]
where $X$ is identified with its image $j(X)$ in ${X^{\bullet}}'$ by \prettyref{cor:embed_bi_sun_dual}. 
Since $X_{\operatorname{cont}}= X^{\bullet\bullet}\cap X$ by \prettyref{cor:embed_bi_sun_dual} again and 
$Fav(T)\subset X_{\operatorname{cont}}$ by \prettyref{rem:favard} (b), the statement is proved.
\end{proof}

In the $\bullet$-reflexive resp.~semi-reflexive case
we have the following corollary of \prettyref{prop:favard}, which generalises 
\cite[Corollary 3.2.4, p.~55]{vanneerven1992}.

\begin{cor}\label{cor:favard_reflexive}
Let $(X,\|\cdot\|,\tau)$ be a sequentially complete d-consistent Mazur--Saks space
and $(T(t))_{t\geq 0}$ a bi-continuous semigroup on $X$ with generator $(A,D(A))$.  
If $X$ is $\bullet$-reflexive w.r.t.~$(T(t))_{t\geq 0}$, then $Fav(T)=D({A^{\bullet}}')$. 
If $(X,\gamma)$ is semi-reflexive, then $Fav(T)=D(A)$. 
\end{cor}
\begin{proof}
Since $D({A^{\bullet}}')\subset X^{\bullet\bullet}$ by \prettyref{rem:bi_sun_sun}, 
the first part of our statement follows from \prettyref{prop:favard}.
Let us consider the second part. Let $(X,\gamma)$ be semi-reflexive. Then 
$X$ is $\bullet$-reflexive w.r.t.~$(T(t))_{t\geq 0}$ by \prettyref{prop:relation_reflexivities_1} 
and $X={X^{\bullet}}'$ via the canonical map $j$. Hence we have $Fav(T)=D({A^{\bullet}}')$ by the first 
part of our statement. As $D(A)\subset Fav(T)$ by \prettyref{rem:favard} (a), we only need to prove that 
$D({A^{\bullet}}')\subset D(A)$. Let $\re\lambda >\omega_{0}(T)$.
Then it follows from \prettyref{thm:resolv_bullet} (c) and \prettyref{prop:resolv_bullet} (a) that
$R(\lambda,A)x=R(\lambda,{A^{\bullet}}')x$ for all $x\in X$. 
Let $y\in D({A^{\bullet}}')$. Then there is ${x^{\bullet}}'\in {X^{\bullet}}'=X$ such that 
$R(\lambda,{A^{\bullet}}'){x^{\bullet}}'=y$ and
\[
y=R(\lambda,{A^{\bullet}}'){x^{\bullet}}'=R(\lambda,A){x^{\bullet}}'\in D(A),
\]
proving $D({A^{\bullet}}')\subset D(A)$.
\end{proof}

Let us turn to a generalisation of \cite[Lemma 3.2.7, p.~57]{vanneerven1992}.

\begin{lem}\label{lem:gamma_closure_resolv}
Let $(X,\|\cdot\|,\tau)$ be a sequentially complete d-consistent Mazur--Saks space
and $(T(t))_{t\geq 0}$ a bi-continuous semigroup on $X$ with generator $(A,D(A))$. 
Then we have 
\[
        \overline{R(\lambda,A)B_{(X,\|\cdot\|^{\bullet})}}^{\gamma}
\subset (R(\lambda,{A^{\bullet}}')B_{{X^{\bullet}}'}\cap X)
\subset \bigcup_{n\in\N}n\overline{R(\lambda,A)B_{(X,\|\cdot\|^{\bullet})}}^{\operatorname{seq-}\gamma}
\]
for all $\lambda\in\rho(A)$ such that $R(\lambda,A)^{\bullet}X^{\bullet}\subset D(A^{\bullet})$, 
where $\overline{R(\lambda,A)B_{(X,\|\cdot\|^{\bullet})}}^{\operatorname{seq-}\gamma}$ is the sequential $\gamma$-closure 
of $R(\lambda,A)B_{(X,\|\cdot\|^{\bullet})}$.
\end{lem}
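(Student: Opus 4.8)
The plan is to prove the two inclusions separately, transporting everything to ${X^{\bullet}}'$ through the canonical map $j\colon X\to{X^{\bullet}}'$ of \prettyref{cor:embed_bi_sun_dual} together with the identity $j(R(\lambda,A)x)=R(\lambda,{A^{\bullet}}')j(x)$ from \prettyref{thm:resolv_bullet} (c), which holds exactly under the hypothesis on $\lambda$ that $R(\lambda,A)^{\bullet}X^{\bullet}\subset D(A^{\bullet})$. Two facts will be used repeatedly: $\|j(x)\|_{{X^{\bullet}}'}=\|x\|^{\bullet}$ for all $x\in X$ (so $j$ restricts to a bijection of $B_{(X,\|\cdot\|^{\bullet})}$ onto $j(X)\cap B_{{X^{\bullet}}'}$, as computed in the proof of \prettyref{cor:embed_bi_sun_dual}) and the equivalence of $\|\cdot\|^{\bullet}$ and $\|\cdot\|$ from \prettyref{thm:eqiv_norms}. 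Identifying $X$ with $j(X)$, the middle set is $\{x\in X\mid j(x)\in R(\lambda,{A^{\bullet}}')B_{{X^{\bullet}}'}\}$.

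For the first inclusion I would start from $R(\lambda,A)B_{(X,\|\cdot\|^{\bullet})}\subset R(\lambda,{A^{\bullet}}')B_{{X^{\bullet}}'}\cap X$, which is immediate: for $x\in B_{(X,\|\cdot\|^{\bullet})}$ one has $j(x)\in B_{{X^{\bullet}}'}$, hence $j(R(\lambda,A)x)=R(\lambda,{A^{\bullet}}')j(x)\in R(\lambda,{A^{\bullet}}')B_{{X^{\bullet}}'}$. It then suffices to show that $R(\lambda,{A^{\bullet}}')B_{{X^{\bullet}}'}\cap X$ is $\gamma$-closed. By Banach--Alaoglu $B_{{X^{\bullet}}'}$ is $\sigma({X^{\bullet}}',X^{\bullet})$-compact (recall $X^{\bullet}$ is a Banach space by \prettyref{cor:bi_sun_dual}), and $R(\lambda,{A^{\bullet}}')=R(\lambda,A^{\bullet})'$ by \prettyref{thm:resolv_bullet} (c) is the adjoint of a bounded operator, hence $\sigma({X^{\bullet}}',X^{\bullet})$-continuous; therefore $R(\lambda,{A^{\bullet}}')B_{{X^{\bullet}}'}$ is $\sigma({X^{\bullet}}',X^{\bullet})$-compact, in particular $\sigma({X^{\bullet}}',X^{\bullet})$-closed in ${X^{\bullet}}'$. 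Since $(X,\gamma)$ is a Mazur space, $X^{\bullet}\subset X^{\circ}=(X,\gamma)'$ by \prettyref{rem:bi_cont_dual}, so $j\colon(X,\gamma)\to({X^{\bullet}}',\sigma({X^{\bullet}}',X^{\bullet}))$ is continuous, and the preimage $R(\lambda,{A^{\bullet}}')B_{{X^{\bullet}}'}\cap X$ of a $\sigma({X^{\bullet}}',X^{\bullet})$-closed set is $\gamma$-closed. Passing to the $\gamma$-closure on the left yields the first inclusion.

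For the second inclusion, take $y\in X$ with $j(y)\in R(\lambda,{A^{\bullet}}')B_{{X^{\bullet}}'}$; in particular $j(y)\in D({A^{\bullet}}')$, so $y\in Fav(T)$ by \prettyref{prop:favard}. Consequently $c:=\sup_{k\in\N}k\|T(1/k)y-y\|<\infty$, since $\bigl(k\|T(1/k)y-y\|\bigr)_{k}$ has $\limsup$ bounded by $\limsup_{t\to 0\rlim}\tfrac1t\|T(t)y-y\|<\infty$ and is therefore a bounded sequence. Write $A_{h}y:=\tfrac1h(T(h)y-y)$ and $z:=R(\lambda,A)y\in D(A)$. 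The commutation $R(\lambda,A)T(h)=T(h)R(\lambda,A)$ gives $R(\lambda,A)\bigl(\lambda y-A_{1/k}y\bigr)=\lambda z-A_{1/k}z$ for every $k\in\N$. As $z\in D(A)$, the description of the generator via the mixed topology $\gamma$ yields $A_{1/k}z\to Az$ in $\gamma$, hence $\lambda z-A_{1/k}z\to\lambda z-Az=(\lambda-A)R(\lambda,A)y=y$ in $\gamma$. On the other hand $\|\lambda y-A_{1/k}y\|^{\bullet}\le\|\lambda y-A_{1/k}y\|\le|\lambda|\,\|y\|+c$ for all $k$, so fixing an integer $n\ge|\lambda|\,\|y\|+c$ we get $\lambda y-A_{1/k}y\in nB_{(X,\|\cdot\|^{\bullet})}$, whence $\lambda z-A_{1/k}z\in nR(\lambda,A)B_{(X,\|\cdot\|^{\bullet})}$ for all $k$. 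Thus $y$ is the $\gamma$-limit of a sequence in $nR(\lambda,A)B_{(X,\|\cdot\|^{\bullet})}$, and since scaling is a $\gamma$-homeomorphism, $y\in n\overline{R(\lambda,A)B_{(X,\|\cdot\|^{\bullet})}}^{\operatorname{seq-}\gamma}\subset\bigcup_{m\in\N}m\overline{R(\lambda,A)B_{(X,\|\cdot\|^{\bullet})}}^{\operatorname{seq-}\gamma}$, which is the second inclusion.

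The delicate point is the second inclusion. The tempting route --- approximating $(\lambda-{A^{\bullet}}')j(y)\in B_{{X^{\bullet}}'}$ by images of elements of $B_{(X,\|\cdot\|^{\bullet})}$ through a Goldstine/bipolar argument --- only produces a \emph{net} whose $j$-images converge in $\sigma({X^{\bullet}}',X^{\bullet})$, which pulls back to mere $\sigma(X,X^{\bullet})$-convergence in $X$ and is strictly weaker than what is needed to reach the sequential $\gamma$-closure. The resolution is to avoid abstract density altogether: the Favard property of $y$ (\prettyref{prop:favard}) supplies the uniform bound $\sup_{k}\|A_{1/k}y\|<\infty$, and the fact that $R(\lambda,A)y\in D(A)$ upgrades $A_{1/k}R(\lambda,A)y\to AR(\lambda,A)y$ to genuine $\gamma$-convergence along the explicit sequence $\lambda y-A_{1/k}y$, giving the sequentiality and the $\|\cdot\|^{\bullet}$-bound simultaneously.
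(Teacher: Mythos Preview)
Your proof is correct. The first inclusion matches the paper's argument exactly: both show $R(\lambda,A)B_{(X,\|\cdot\|^{\bullet})}\subset R(\lambda,{A^{\bullet}}')B_{{X^{\bullet}}'}\cap X$ via the intertwining relation from \prettyref{thm:resolv_bullet} (c), then use Banach--Alaoglu plus $\sigma({X^{\bullet}}',X^{\bullet})$-continuity of $R(\lambda,{A^{\bullet}}')$ and $\gamma$-$\sigma({X^{\bullet}}',X^{\bullet})$-continuity of $j$ to conclude that the right-hand side is $\gamma$-closed.

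For the second inclusion the overall strategy coincides with the paper's---use $y\in Fav(T)$ via \prettyref{prop:favard} to obtain a uniform bound, then exhibit an explicit $\gamma$-convergent sequence in some $nR(\lambda,A)B_{(X,\|\cdot\|^{\bullet})}$---but the approximating sequences differ. The paper uses the Ces\`aro means $\frac{1}{t}\int_{0}^{t}T(s)y\,\d s$ together with the identity $\frac{1}{t}\int_{0}^{t}T(s)y\,\d s=R(\lambda,A)\bigl(\frac{\lambda}{t}\int_{0}^{t}T(s)y\,\d s-\frac{1}{t}(T(t)y-y)\bigr)$ from \prettyref{thm:generator} (d); you instead take $z=R(\lambda,A)y\in D(A)$ and use $\lambda z-A_{1/k}z=R(\lambda,A)(\lambda y-A_{1/k}y)\to y$ in $\gamma$. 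Your choice is slightly more elementary in that it avoids the integral formula and relies only on the commutation $R(\lambda,A)T(t)=T(t)R(\lambda,A)$ and the $\gamma$-description of the generator; the paper's choice has the advantage that the $\gamma$-convergence of the means to $y$ was already established in the proof of \prettyref{thm:eqiv_norms}. Either way works.
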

\begin{proof}
Due to \prettyref{thm:resolv_bullet} (c) it holds 
$\rho({A^{\bullet}}')=\rho(A^{\bullet})$ and $R(\lambda,{A^{\bullet}}')=R(\lambda,A^{\bullet})'$ for all 
$\lambda\in\rho(A^{\bullet})$. Now, let $\lambda\in\rho(A)$ such that 
$R(\lambda,A)^{\bullet}X^{\bullet}\subset D(A^{\bullet})$. Then it follows from 
\prettyref{thm:resolv_bullet} (c) again that $j(R(\lambda,A)x)=R(\lambda,{A^{\bullet}}')j(x)$ for all $x\in X$ 
with the map $j\colon X\to {X^{\bullet}}'$,
$\langle j(x),x^{\bullet}\rangle=\langle x^{\bullet},x\rangle$, from \prettyref{cor:embed_bi_sun_dual}. 
$j$ is an isometry as a map from $(X,\|\cdot\|^{\bullet})$ to $({X^{\bullet}}',\|\cdot\|_{{X^{\bullet}}'})$. 
We deduce that $R(\lambda,A)B_{(X,\|\cdot\|^{\bullet})}\subset (R(\lambda,{A^{\bullet}}')B_{{X^{\bullet}}'}\cap X)$.
Since $B_{{X^{\bullet}}'}$ is $\sigma({X^{\bullet}}',X^{\bullet})$-weakly compact by the Banach--Alaoglu theorem 
and the resolvent $R(\lambda,{A^{\bullet}}')$ is $\sigma({X^{\bullet}}',X^{\bullet})$-continuous, the set 
$R(\lambda,{A^{\bullet}}')B_{{X^{\bullet}}'}$ is 
$\sigma({X^{\bullet}}',X^{\bullet})$-weakly compact as well. 
Further, $j$ as map from $(X,\gamma)$ to $({X^{\bullet}}',\sigma({X^{\bullet}}',X^{\bullet}))$ is continuous 
because $X^{\bullet}\subset X^{\circ}=(X,\gamma)'$ by \prettyref{rem:bi_cont_dual}. Together with the 
$\sigma({X^{\bullet}}',X^{\bullet})$-weak closedness of $R(\lambda,{A^{\bullet}}')B_{{X^{\bullet}}'}$ this implies 
the first inclusion. 

Next, we show that the second inclusion is a consequence of the equation
\begin{align}\label{eq:gamma_closure_resolv_1}
  \frac{1}{t}\int_{0}^{t}T(s)x\d x
&=R(\lambda,A)(\lambda-A)\frac{1}{t}\int_{0}^{t}T(s)x\d x\nonumber\\
&=R(\lambda,A)\Bigl(\frac{\lambda}{t}\int_{0}^{t}T(s)x\d x-\frac{1}{t}(T(t)x-x)\Bigr),
\end{align}
for all $t>0$ and $x\in X$, which we get from \prettyref{thm:generator} (d). 
Indeed, take $x\in R(\lambda,{A^{\bullet}}')B_{{X^{\bullet}}'}\cap X$. 
Due to \prettyref{prop:favard} we have 
\begin{equation}\label{eq:gamma_closure_resolv_2}
       (R(\lambda,{A^{\bullet}}')B_{{X^{\bullet}}'}\cap X)
\subset (D({A^{\bullet}}')\cap X)
=       Fav(T).
\end{equation}
So, since $x\in Fav(T)$, $(T(t))_{t\geq 0}$ is exponentially bounded, 
$\|\cdot\|=\sup_{p_{\gamma}\in\mathcal{P}_{\gamma}}p_{\gamma}$ on $X$ for some directed system of 
seminorms $\mathcal{P}_{\gamma}$ that generates $\gamma$, and 
$\|\cdot\|^{\bullet}$ is equivalent to $\|\cdot\|$ by \prettyref{thm:eqiv_norms}, the right-hand side 
of \eqref{eq:gamma_closure_resolv_1} remains $\|\cdot\|^{\bullet}$-bounded as $t\to 0\rlim$ whereas 
the left-hand side $\gamma$-converges to $x$ (as a sequence with $t=t_{n}$ for any $(t_{n})_{n\in\N}$ with $t_{n}\to 0\rlim$) 
by the proof of \prettyref{thm:eqiv_norms}. 
Thus there is $n\in\N$ such that $x\in n\overline{R(\lambda,A)B_{(X,\|\cdot\|^{\bullet})}}^{\operatorname{seq-}\gamma}$.
\end{proof}

Due to the equivalence of $\|\cdot\|^{\bullet}$ and $\|\cdot\|$ there is $M\geq 0$ such that 
$B_{\|\cdot\|}\subset B_{(X,\|\cdot\|^{\bullet})}\subset MB_{\|\cdot\|}$, which yields that the lemma above is 
still valid if $\|\cdot\|^{\bullet}$ is replaced by $\|\cdot\|$. 
The next theorem is a generalisation of \cite[Theorem 3.2.8, p.~57]{vanneerven1992} and describes the space
$Fav(T)$ in terms of approximation by elements of $D(A)$.

\begin{thm}\label{thm:favard}
Let $(X,\|\cdot\|,\tau)$ be a sequentially complete d-consistent Mazur--Saks space 
and $(T(t))_{t\geq 0}$ a bi-continuous semigroup on $X$ with generator $(A,D(A))$.
Then the following assertions are equivalent for $x\in X$:
\begin{enumerate}
\item[(i)] $x\in Fav(T)$
\item[(ii)] For some (all) $\lambda\in\rho(A)$ such that $R(\lambda,A)^{\bullet}X^{\bullet}\subset D(A^{\bullet})$ 
there exists a $\|\cdot\|$-bounded sequence $(y_{n})_{n\in\N}$ in $X$ with
$\gamma$-$\lim_{n\to\infty}R(\lambda,A)y_{n}=x$. 
\item[(iii)] For some (all) $\lambda\in\rho(A)$ such that $R(\lambda,A)^{\bullet}X^{\bullet}\subset D(A^{\bullet})$ 
there exist a $\|\cdot\|$-bounded sequence $(y_{n})_{n\in\N}$ in $X$ and $k\in\N_{0}$ 
with $\gamma$-$\lim_{n\to\infty}R(\lambda,A)^{k+1}y_{n}=R(\lambda,A)^{k}x$.
\end{enumerate}
\end{thm}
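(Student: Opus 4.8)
The plan is to close the cycle $(i)\Rightarrow(ii)\Rightarrow(iii)\Rightarrow(i)$, reading $(ii)$ as the case $k=0$ of $(iii)$ and sorting out the ``some''/``all'' quantifiers only at the very end. Throughout I identify $X$ with $j(X)\subset{X^{\bullet}}'$ via the canonical map $j$ of \prettyref{cor:embed_bi_sun_dual}, and for an \emph{admissible} $\lambda$ (i.e.\ $\lambda\in\rho(A)$ with $R(\lambda,A)^{\bullet}X^{\bullet}\subset D(A^{\bullet})$) I use freely that \prettyref{thm:resolv_bullet} gives $\lambda\in\rho(A^{\bullet})=\rho({A^{\bullet}}')$, $R(\lambda,{A^{\bullet}}')=R(\lambda,A^{\bullet})'$, and (iterating part~(c)) $j\bigl(R(\lambda,A)^{m}x\bigr)=R(\lambda,{A^{\bullet}}')^{m}j(x)$ for all $m\in\N$, $x\in X$; admissible $\lambda$ exist, for instance all $\lambda$ with $\re\lambda>\omega_{0}(T)$ by \prettyref{prop:resolv_bullet}~(a). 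Two auxiliary facts I will rely on: $j$ is continuous as a map $(X,\gamma)\to({X^{\bullet}}',\sigma({X^{\bullet}}',X^{\bullet}))$ because $X^{\bullet}\subset X^{\circ}=(X,\gamma)'$ (\prettyref{rem:bi_cont_dual}), and $R(\lambda,{A^{\bullet}}')=R(\lambda,A^{\bullet})'$ is $\sigma({X^{\bullet}}',X^{\bullet})$-continuous, being the adjoint of the bounded operator $R(\lambda,A^{\bullet})$ on the Banach space $X^{\bullet}$. Finally, \prettyref{prop:favard} identifies $Fav(T)=D({A^{\bullet}}')\cap X$.

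For $(i)\Rightarrow(ii)$ (for every admissible $\lambda$) I argue as follows. If $x\in Fav(T)$ then $j(x)\in D({A^{\bullet}}')$ by \prettyref{prop:favard}, so $j(x)=R(\lambda,{A^{\bullet}}')u$ with $u:=(\lambda-{A^{\bullet}}')j(x)\in{X^{\bullet}}'$; with $C:=\|u\|_{{X^{\bullet}}'}$ this says $x\in C\bigl(R(\lambda,{A^{\bullet}}')B_{{X^{\bullet}}'}\cap X\bigr)$. By the second inclusion of Lemma~\ref{lem:gamma_closure_resolv}---used with $\|\cdot\|$ in place of $\|\cdot\|^{\bullet}$, which is legitimate by the remark following that lemma---there is $t>0$ with $x\in t\,\overline{R(\lambda,A)B_{\|\cdot\|}}^{\operatorname{seq-}\gamma}$, and since scaling is a $\gamma$-homeomorphism this exhibits $x$ as the $\gamma$-limit of a $\|\cdot\|$-bounded sequence $\bigl(R(\lambda,A)y_{n}\bigr)_{n\in\N}$, which is $(ii)$.

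The implication $(ii)\Rightarrow(iii)$ for the same $\lambda$ holds trivially with $k=0$, so the heart of the matter is $(iii)$ (for some admissible $\lambda$) $\Rightarrow(i)$. Let $(y_{n})_{n\in\N}$ be $\|\cdot\|$-bounded, say $\|y_{n}\|\le C$, and $k\in\N_{0}$ with $\gamma\text{-}\lim_{n\to\infty}R(\lambda,A)^{k+1}y_{n}=R(\lambda,A)^{k}x$. Applying $j$ and using its $\gamma$-to-$\sigma({X^{\bullet}}',X^{\bullet})$ continuity together with $j\bigl(R(\lambda,A)^{m}\cdot\bigr)=R(\lambda,{A^{\bullet}}')^{m}j(\cdot)$ yields
\[
\sigma({X^{\bullet}}',X^{\bullet})\text{-}\lim_{n\to\infty}R(\lambda,{A^{\bullet}}')^{k+1}j(y_{n})=R(\lambda,{A^{\bullet}}')^{k}j(x).
\]
Now $\|j(y_{n})\|_{{X^{\bullet}}'}=\|y_{n}\|^{\bullet}\le\|y_{n}\|\le C$ by \prettyref{cor:embed_bi_sun_dual}, so every term $R(\lambda,{A^{\bullet}}')^{k+1}j(y_{n})$ lies in $R(\lambda,{A^{\bullet}}')^{k+1}\bigl(C\,B_{{X^{\bullet}}'}\bigr)$, which is $\sigma({X^{\bullet}}',X^{\bullet})$-compact---as the image of the weak$^{*}$-compact ball $C\,B_{{X^{\bullet}}'}$ (Banach--Alaoglu) under the weak$^{*}$-continuous operator $R(\lambda,{A^{\bullet}}')^{k+1}$---hence weak$^{*}$-closed. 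Therefore $R(\lambda,{A^{\bullet}}')^{k}j(x)=R(\lambda,{A^{\bullet}}')^{k+1}w$ for some $w\in C\,B_{{X^{\bullet}}'}$, and cancelling the injective operator $R(\lambda,{A^{\bullet}}')^{k}$ gives $j(x)=R(\lambda,{A^{\bullet}}')w\in D({A^{\bullet}}')$, whence $x\in D({A^{\bullet}}')\cap X=Fav(T)$ by \prettyref{prop:favard}. Since $(ii)$ is the case $k=0$ of $(iii)$, the chain $(i)\Rightarrow(ii)_{\mathrm{all}}\Rightarrow(iii)_{\mathrm{all}}\Rightarrow(iii)_{\mathrm{some}}\Rightarrow(i)$ (together with $(ii)_{\mathrm{some}}\Rightarrow(iii)_{\mathrm{some}}$) yields the equivalence for both quantifier readings.

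I expect $(iii)\Rightarrow(i)$ to be the genuine obstacle. Transporting the $\gamma$-limit back through $\gamma$-closures of $R(\lambda,A)B_{\|\cdot\|}$ in the spirit of Lemma~\ref{lem:gamma_closure_resolv} only produces information about $R(\lambda,A)^{k}x$, which is tautologically an element of $Fav(T)$ once $k\ge1$ and hence useless. The correct device is to pass to ${X^{\bullet}}'$, where weak$^{*}$-compactness of balls and weak$^{*}$-continuity of $R(\lambda,{A^{\bullet}}')$ make $R(\lambda,{A^{\bullet}}')^{k+1}(C\,B_{{X^{\bullet}}'})$ weak$^{*}$-closed, so that the limit is genuinely captured in the range of $R(\lambda,{A^{\bullet}}')^{k+1}$; injectivity of the resolvent then removes the surplus $k$ factors. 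A minor technical point is the passage from $\|\cdot\|^{\bullet}$ to $\|\cdot\|$ in Lemma~\ref{lem:gamma_closure_resolv}, which is precisely the content of the remark immediately after it.
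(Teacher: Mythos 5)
Your proof is correct, and while your (i)$\Rightarrow$(ii) follows the paper's route (via \prettyref{prop:favard} and the second inclusion of \prettyref{lem:gamma_closure_resolv}), your argument for (iii)$\Rightarrow$(i) is genuinely different and in fact shorter than the paper's. The paper stays inside $X$: it first downgrades the $\gamma$-convergence to $\sigma(X,X^{\bullet})$-convergence and runs a step-down induction on $k$, using the $\|\cdot\|_{X'}$-density of $D(A^{\bullet})=R(\lambda,A^{\bullet})X^{\bullet}$ in $X^{\bullet}$ to remove one resolvent factor at a time until $R(\lambda,A)y_{n}\to x$ in $\sigma(X,X^{\bullet})$; it then needs the equicontinuity machinery (\prettyref{prop:T(t)_equicont}, \prettyref{cor:weakly_bi_sun_closed}, and the identification of the $\sigma(X,X^{\bullet})$-, $\sigma(X,X^{\circ})$- and $\gamma$-closures of the convex set $KR(\lambda,A)B_{\|\cdot\|}$) to convert the resulting weak closure back into a $\gamma$-closure so that the first inclusion of \prettyref{lem:gamma_closure_resolv} applies. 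You instead push everything into ${X^{\bullet}}'$ at once via $j$ and dispose of all $k+1$ resolvent factors simultaneously: weak$^{*}$-compactness of $C\,B_{{X^{\bullet}}'}$ together with weak$^{*}$-continuity of $R(\lambda,{A^{\bullet}}')^{k+1}$ makes the image weak$^{*}$-closed, so the limit is captured in the range, and injectivity of $R(\lambda,{A^{\bullet}}')^{k}$ cancels the surplus factors. All the ingredients you invoke --- the iterated identity $j(R(\lambda,A)^{m}x)=R(\lambda,{A^{\bullet}}')^{m}j(x)$ from \prettyref{thm:resolv_bullet}~(c), the $\gamma$-to-$\sigma({X^{\bullet}}',X^{\bullet})$ continuity of $j$, and $Fav(T)=D({A^{\bullet}}')\cap X$ from \prettyref{prop:favard} --- are available under the stated hypotheses, and your bookkeeping of the some/all quantifiers closes the cycle correctly. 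What your route buys is brevity and a uniform treatment of $k=0$ and $k>0$; what the paper's route buys is that the argument remains in $X$ and reuses the equicontinuity results of Section~\ref{sect:sun_dual}, which are needed anyway for \prettyref{thm:bi_sun_reflexive_j_surjective}.
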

\begin{proof}
(i)$\Rightarrow$(ii) Let $x\in Fav(T)$ and $\lambda\in\rho(A)$ such that 
$R(\lambda,A)^{\bullet}X^{\bullet}\subset D(A^{\bullet})$. Since $\lambda\in\rho({A^{\bullet}}')$ 
by \prettyref{thm:resolv_bullet} (a) and (c), and
\begin{equation}\label{eq:favard_1}
 (R(\lambda,{A^{\bullet}}'){X^{\bullet}}'\cap X)
=(D({A^{\bullet}}')\cap X)
=Fav(T)
\end{equation}
by \prettyref{prop:favard}, there is $m\in\N$ such that 
$x\in R(\lambda,{A^{\bullet}}')mB_{{X^{\bullet}}'}\cap X$. 
Due to the second inclusion of \prettyref{lem:gamma_closure_resolv} there is $n\in\N$ with 
$x\in mn\overline{R(\lambda,A)B_{(X,\|\cdot\|^{\bullet})}}^{\operatorname{seq-}\gamma}$, confirming the first implication. 
The implication (ii)$\Rightarrow$(iii) is trivial. 

(iii)$\Rightarrow$(i) Let there exist a $\|\cdot\|$-bounded sequence $(y_{n})_{n\in\N}$ in $X$ and $k\in\N_{0}$ 
for which we have that $\gamma$-$\lim_{n\to\infty}R(\lambda,A)^{k+1}y_{n}=R(\lambda,A)^{k}x$ for some 
$\lambda\in\rho(A)$ such that $R(\lambda,A)^{\bullet}X^{\bullet}\subset D(A^{\bullet})$. If $k=0$, then (i) 
is implied by the first inclusion of \prettyref{lem:gamma_closure_resolv} and \eqref{eq:gamma_closure_resolv_2}.
Suppose that $k>0$. Using (iii), \prettyref{thm:resolv_bullet} (a) and 
$X^{\bullet}\subset X^{\circ}=(X,\gamma)'$ by \prettyref{rem:bi_cont_dual}, we obtain 
\begin{align}\label{eq:favard_2}
  \lim_{n\to\infty}\langle R(\lambda,A^{\bullet})x^{\bullet},R(\lambda,A)^{k}y_{n}\rangle
&=\lim_{n\to\infty}\langle R(\lambda,A)^{\bullet}x^{\bullet},R(\lambda,A)^{k}y_{n}\rangle\nonumber\\
&=\lim_{n\to\infty}\langle x^{\bullet},R(\lambda,A)^{k+1}y_{n}\rangle
 =\langle x^{\bullet},R(\lambda,A)^{k}x\rangle\nonumber\\
&=\langle R(\lambda,A^{\bullet})x^{\bullet},R(\lambda,A)^{k-1}x\rangle
\end{align}
for all $x^{\bullet}\in X^{\bullet}$. By \prettyref{cor:bi_sun_dual} we know that 
$R(\lambda,A^{\bullet})X^{\bullet}=D(A^{\bullet})$ and that $(A^{\bullet},D(A^{\bullet}))$ is the generator of 
a $\|\cdot\|_{X'}$-strongly continuous semigroup on $X^{\bullet}$. Thus $D(A^{\bullet})$ is $\|\cdot\|_{X'}$-dense
in $X^{\bullet}$. Let $x^{\bullet}\in X^{\bullet}$. Then there is a sequence $(z_{m}^{\bullet})_{m\in\N}$ 
in $X^{\bullet}$ such that $(R(\lambda,A^{\bullet})z_{m}^{\bullet})_{m\in\N}$ converges to $x^{\bullet}$ 
w.r.t.~$\|\cdot\|_{X'}$. We note that 
\begin{align*}
     |\langle x^{\bullet},R(\lambda,A)^{k}y_{n}-R(\lambda,A)^{k-1}x\rangle|
&\leq\|x^{\bullet}-R(\lambda,A^{\bullet})z_{m}^{\bullet}\|_{X'}
                 \|R(\lambda,A)^{k}y_{n}-R(\lambda,A)^{k-1}x\|\\
&\phantom{\leq}+|\langle R(\lambda,A^{\bullet})z_{m}^{\bullet},R(\lambda,A)^{k}y_{n}-R(\lambda,A)^{k-1}x\rangle|
\end{align*}
for all $n,m\in\N$. 
Since $(R(\lambda,A)^{k}y_{n}-R(\lambda,A)^{k-1}x)_{n\in\N}$ is $\|\cdot\|$-bounded 
by the $\|\cdot\|$-boundedness of $(y_{n})_{n\in\N}$, there 
is $C>0$ such that $\|R(\lambda,A)^{k}y_{n}-R(\lambda,A)^{k-1}x\|\leq C$ for all $n\in\N$. Due to 
$\|\cdot\|_{X'}$-$\lim_{m\to\infty}R(\lambda,A^{\bullet})z_{m}^{\bullet}=x^{\bullet}$, for any $\varepsilon>0$ 
there is $M_{0}\in\N$ such that $\|x^{\bullet}-R(\lambda,A^{\bullet})z_{m}^{\bullet}\|_{X'}\leq \tfrac{\varepsilon}{2C}$ 
for all $m\geq M_{0}$. Then there is $N\in\N$ such that 
$|\langle R(\lambda,A^{\bullet})z_{M_{0}}^{\bullet},R(\lambda,A)^{k}y_{n}-R(\lambda,A)^{k-1}x\rangle|\leq
\tfrac{\varepsilon}{2}$ for all $n\geq N$ by \eqref{eq:favard_2},
which implies that 
\[
    |\langle x^{\bullet},R(\lambda,A)^{k}y_{n}-R(\lambda,A)^{k-1}x\rangle|
\leq\frac{\varepsilon}{2C}C+ \frac{\varepsilon}{2}
=\varepsilon
\]
for all $n\geq N$. 
Thus we have 
\[
\lim_{n\to\infty}\langle x^{\bullet},R(\lambda,A)^{k}y_{n}\rangle
=\langle x^{\bullet},R(\lambda,A)^{k-1}x\rangle
\]
for all $x^{\bullet}\in X^{\bullet}$, which means that $R(\lambda,A)^{k}y_{n}\to R(\lambda,A)^{k-1}x$ in the 
$\sigma(X,X^{\bullet})$-topology. Repeating this argument yields $R(\lambda,A)y_{n}\to x$ in the 
$\sigma(X,X^{\bullet})$-topology. Therefore $x$ is an element of the $\sigma(X,X^{\bullet})$-closure of 
$K R(\lambda,A)B_{\|\cdot\|}$ for some $K\geq 0$ by the $\|\cdot\|$-boundedness of $(y_{n})_{n\in\N}$. 
Due to \prettyref{prop:T(t)_equicont} and \prettyref{rem:T(t)_equicont} (a) $K R(\lambda,A)B_{\|\cdot\|}$  
is $\sigma(X,X^{\circ})$-$(T(t))_{t\geq 0}$-equicontinuous and hence we get 
\[
 \overline{K R(\lambda,A)B_{\|\cdot\|}}^{\sigma(X,X^{\bullet})}
=\overline{K R(\lambda,A)B_{\|\cdot\|}}^{\sigma(X,X^{\circ})}
\]
by \prettyref{cor:weakly_bi_sun_closed}. Since $K R(\lambda,A)B_{\|\cdot\|}$ is convex and 
$(X,\sigma(X,X^{\circ}))'=X^{\circ}=(X,\gamma)'$, we obtain 
\[
 \overline{K R(\lambda,A)B_{\|\cdot\|}}^{\sigma(X,X^{\bullet})}
=\overline{K R(\lambda,A)B_{\|\cdot\|}}^{\sigma(X,X^{\circ})}
=\overline{K R(\lambda,A)B_{\|\cdot\|}}^{\gamma}
=K\overline{R(\lambda,A)B_{\|\cdot\|}}^{\gamma}
\]
by \cite[8.2.5 Proposition, p.~149]{jarchow1981}. 
In combination with the first inclusion of \prettyref{lem:gamma_closure_resolv} and \eqref{eq:favard_1} we conclude 
that $x\in Fav(T)$.
\end{proof}

Our next result generalises \cite[Theorem 3.2.9, p.~57]{vanneerven1992}. 

\begin{thm}\label{thm:favard_gamma_closed_resolv}
Let $(X,\|\cdot\|,\tau)$ be a sequentially complete d-consistent Mazur--Saks space 
and $(T(t))_{t\geq 0}$ a bi-continuous semigroup on $X$ with generator $(A,D(A))$. 
Then the following assertions are equivalent:
\begin{enumerate}
\item[(i)] $Fav(T)=D(A)$
\item[(ii)] $R(\lambda,A)B_{(X,\|\cdot\|^{\bullet})}$ is $\gamma$-closed 
for some (all) $\lambda\in\rho(A)$ such that $R(\lambda,A)^{\bullet}X^{\bullet}\subset D(A^{\bullet})$.
\item[(iii)] $R(\lambda,A)B_{(X,\|\cdot\|^{\bullet})}$ is sequentially $\gamma$-closed 
for some (all) $\lambda\in\rho(A)$ such that $R(\lambda,A)^{\bullet}X^{\bullet}\subset D(A^{\bullet})$.
\end{enumerate}
\end{thm}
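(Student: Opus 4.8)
The plan is to establish the cycle (i)$\Rightarrow$(ii)$\Rightarrow$(iii)$\Rightarrow$(i); here (ii)$\Rightarrow$(iii) is immediate because every $\gamma$-closed set is sequentially $\gamma$-closed, and the ``some/all'' quantifiers take care of themselves once the cycle is closed, since reaching (i) returns (ii) and (iii) for every admissible $\lambda$ (such $\lambda$ exist, e.g.\ any $\lambda$ with $\re\lambda>\omega_{0}(T)$ by \prettyref{prop:resolv_bullet} (a)).

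For (i)$\Rightarrow$(ii) I would fix $\lambda\in\rho(A)$ with $R(\lambda,A)^{\bullet}X^{\bullet}\subset D(A^{\bullet})$ and, in view of the first inclusion of \prettyref{lem:gamma_closure_resolv}, reduce to proving $R(\lambda,{A^{\bullet}}')B_{{X^{\bullet}}'}\cap X\subset R(\lambda,A)B_{(X,\|\cdot\|^{\bullet})}$; this gives $\overline{R(\lambda,A)B_{(X,\|\cdot\|^{\bullet})}}^{\gamma}\subset R(\lambda,A)B_{(X,\|\cdot\|^{\bullet})}$, i.e.\ $\gamma$-closedness. To see the inclusion, take $x\in X$ with $j(x)\in R(\lambda,{A^{\bullet}}')B_{{X^{\bullet}}'}$, where $j\colon X\to{X^{\bullet}}'$ is the canonical map. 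Then $j(x)\in D({A^{\bullet}}')$, so $x\in D({A^{\bullet}}')\cap X=Fav(T)=D(A)$ by \prettyref{prop:favard} and (i). Setting $y:=(\lambda-A)x\in X$, \prettyref{thm:resolv_bullet} (c) gives $j(x)=j(R(\lambda,A)y)=R(\lambda,{A^{\bullet}}')j(y)$, hence $j(y)=(\lambda-{A^{\bullet}}')j(x)\in B_{{X^{\bullet}}'}$; since $j$ is an isometry from $(X,\|\cdot\|^{\bullet})$ into $({X^{\bullet}}',\|\cdot\|_{{X^{\bullet}}'})$ (as used in the proof of \prettyref{lem:gamma_closure_resolv}), $\|y\|^{\bullet}\leq1$ and therefore $x=R(\lambda,A)y\in R(\lambda,A)B_{(X,\|\cdot\|^{\bullet})}$. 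This argument works for every admissible $\lambda$.

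For (iii)$\Rightarrow$(i), since $D(A)\subset Fav(T)$ always holds by \prettyref{rem:favard} (a), I only need $Fav(T)\subset D(A)$. Given $\lambda$ as in (iii) and $x\in Fav(T)$, \prettyref{thm:favard} ((i)$\Rightarrow$(ii)) supplies a $\|\cdot\|$-bounded sequence $(y_{n})_{n\in\N}$ in $X$ with $\gamma$-$\lim_{n\to\infty}R(\lambda,A)y_{n}=x$. Putting $C:=\sup_{n\in\N}\|y_{n}\|$ (the case $C=0$ being trivial) and using $\|\cdot\|^{\bullet}\leq\|\cdot\|$ from \prettyref{thm:eqiv_norms}, one has $y_{n}/C\in B_{(X,\|\cdot\|^{\bullet})}$, so $R(\lambda,A)(y_{n}/C)\in R(\lambda,A)B_{(X,\|\cdot\|^{\bullet})}$ for all $n$ and $\gamma$-$\lim_{n\to\infty}R(\lambda,A)(y_{n}/C)=x/C$; the sequential $\gamma$-closedness in (iii) then yields $x/C\in R(\lambda,A)B_{(X,\|\cdot\|^{\bullet})}\subset R(\lambda,A)X=D(A)$, whence $x\in D(A)$ since $D(A)$ is a linear subspace.

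The routine part is the bookkeeping with the isometry $j$ and with the equivalent norms $\|\cdot\|$ and $\|\cdot\|^{\bullet}$; the conceptual point, and the only place where the hypothesis $Fav(T)=D(A)$ is genuinely used, is the reverse inclusion in (i)$\Rightarrow$(ii): an $x$ with $j(x)\in R(\lambda,{A^{\bullet}}')B_{{X^{\bullet}}'}$ automatically lies in $Fav(T)$ by \prettyref{prop:favard}, and only $Fav(T)=D(A)$ allows one to pass to $x\in D(A)$, form $(\lambda-A)x$, and control its $\|\cdot\|^{\bullet}$-norm. Everything else has already been prepared in \prettyref{lem:gamma_closure_resolv}, \prettyref{thm:favard} and \prettyref{thm:resolv_bullet}, so I do not expect any further obstacle.
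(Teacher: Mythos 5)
Your proof is correct and follows essentially the same route as the paper: (i)$\Rightarrow$(ii) via the first inclusion of \prettyref{lem:gamma_closure_resolv}, \prettyref{prop:favard} and the isometry $j$, and (iii)$\Rightarrow$(i) via the second inclusion of that lemma (which you invoke indirectly through \prettyref{thm:favard} (i)$\Rightarrow$(ii), itself proved from that inclusion, rather than quoting the lemma directly as the paper does). The handling of the some/all quantifiers and of the norm bookkeeping matches the paper's argument.
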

\begin{proof}
(i)$\Rightarrow$(ii) Suppose that $Fav(T)=D(A)$. 
Let $\lambda\in\rho(A)$ such that $R(\lambda,A)^{\bullet}X^{\bullet}\subset D(A^{\bullet})$
and $y\in\overline{R(\lambda,A)B_{(X,\|\cdot\|^{\bullet})} }^{\gamma}$. 
By the first inclusion of \prettyref{lem:gamma_closure_resolv} there is ${x^{\bullet}}'\in B_{{X^{\bullet}}'}$ 
such that $j(y)=R(\lambda,{A^{\bullet}}'){x^{\bullet}}'$. If follows from \eqref{eq:favard_1} 
that $y\in Fav(T)$ and hence by our assumption that there is $x\in X$ such that 
$R(\lambda,A)x=y$. Due to \prettyref{thm:resolv_bullet} (c) we have $j(R(\lambda,A)x)=R(\lambda,{A^{\bullet}}')j(x)$ 
and the injectivity of $R(\lambda,{A^{\bullet}}')$ yields $j(x)={x^{\bullet}}'$. 
But $j$ is an isometry as a map from $(X,\|\cdot\|^{\bullet})$ to $({X^{\bullet}}',\|\cdot\|_{{X^{\bullet}}'})$, 
which implies $x\in B_{(X,\|\cdot\|^{\bullet})}$. Therefore $y\in R(\lambda,A)B_{(X,\|\cdot\|^{\bullet})}$, 
meaning that $R(\lambda,A)B_{(X,\|\cdot\|^{\bullet})}$ is $\gamma$-closed. This proves the first implication.
The implication (ii)$\Rightarrow$(iii) is trivial. 

(iii)$\Rightarrow$(i) Now, suppose that $R(\lambda,A)B_{(X,\|\cdot\|^{\bullet})}$ is sequentially $\gamma$-closed 
for some $\lambda\in\rho(A)$ such that $R(\lambda,A)^{\bullet}X^{\bullet}\subset D(A^{\bullet})$.
Then we derive from 
\[
\overline{R(\lambda,A)B_{(X,\|\cdot\|^{\bullet})}}^{\operatorname{seq-}\gamma}
=R(\lambda,A)B_{(X,\|\cdot\|^{\bullet})}
\]
and the second inclusion of \prettyref{lem:gamma_closure_resolv} that 
\[
(R(\lambda,{A^{\bullet}}')B_{{X^{\bullet}}'}\cap X)
\subset\bigcup_{n\in\N}nR(\lambda,A)B_{(X,\|\cdot\|^{\bullet})}
=D(A).
\]
This gives us $Fav(T)\subset D(A)$ by \eqref{eq:favard_1}. The converse inclusion is also true by 
\prettyref{rem:favard} (a).
\end{proof}

\begin{rem}
We note that we may replace the (sequential) $\gamma$-closures in 
\prettyref{lem:gamma_closure_resolv} and the (sequential) $\gamma$-closedness in \prettyref{thm:favard_gamma_closed_resolv} 
as well as the $\gamma$-limits in \prettyref{thm:favard} by (sequential) $\tau$-closures, (sequential) $\tau$-closedness 
and $\tau$-limits, respectively, by \prettyref{defn:mixed_top_Saks} (a) and \cite[I.1.10 Proposition, p.~9]{cooper1978}.
\end{rem}

In the $\bullet$-reflexive case we have the following generalisation of 
\cite[Theorem 3.2.12, p.~59]{vanneerven1992}.

\begin{thm}\label{thm:bi_sun_reflexive_j_surjective}
Let $(X,\|\cdot\|,\tau)$ be a sequentially complete d-consistent Mazur--Saks space 
and $(T(t))_{t\geq 0}$ a bi-continuous semigroup on $X$ with generator $(A,D(A))$. 
Suppose that $X$ is $\bullet$-reflexive w.r.t.~$(T(t))_{t\geq 0}$. 
Then the following assertions are equivalent:
\begin{enumerate}
\item[(i)] $j\colon X\to {X^{\bullet}}'$ is surjective. 
\item[(ii)] $R(\lambda,A)B_{(X,\|\cdot\|^{\bullet})}$ is $\sigma(X,X^{\circ})$-compact 
for some (all) $\lambda\in\rho(A)$ such that $R(\lambda,A)^{\bullet}X^{\bullet}\subset D(A^{\bullet})$.
\item[(iii)] $R(\lambda,A)B_{(X,\|\cdot\|^{\bullet})}$ is $\sigma(X,X^{\bullet})$-compact 
for some (all) $\lambda\in\rho(A)$ such that $R(\lambda,A)^{\bullet}X^{\bullet}\subset D(A^{\bullet})$.
\item[(iv)] $B_{(X,\|\cdot\|^{\bullet})}$ is $\sigma(X,X^{\bullet})$-compact.
\end{enumerate}
Each of the assertions (i)-(iv) implies $Fav(T)=D(A)$.
\end{thm}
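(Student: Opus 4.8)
The plan is to establish the cyclic chain of implications (i) $\Rightarrow$ (iv) $\Rightarrow$ (ii) $\Rightarrow$ (iii) $\Rightarrow$ (i), where (ii) and (iii) are read in their ``for all admissible $\lambda$'' form when they occur as a conclusion and in their ``for some admissible $\lambda$'' form when they occur as a hypothesis; here $\lambda\in\rho(A)$ is called \emph{admissible} if $R(\lambda,A)^{\bullet}X^{\bullet}\subset D(A^{\bullet})$. The set of admissible $\lambda$ is nonempty, since every $\lambda$ with $\re\lambda>\omega_{0}(T)$ is admissible by \prettyref{prop:resolv_bullet} (a); hence, once the cycle is closed, (i) implies (iv), which in turn yields (ii) --- and therefore also (iii) --- for \emph{every} admissible $\lambda$, so the ``for all'' versions follow automatically. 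Throughout I identify $X$ with $j(X)\subset{X^{\bullet}}'$ and use, as recorded in \prettyref{cor:embed_bi_sun_dual} and the proof of \prettyref{lem:gamma_closure_resolv}, that $j$ is injective, a homeomorphism from $(X,\sigma(X,X^{\bullet}))$ onto $j(X)$ equipped with the relative $\sigma({X^{\bullet}}',X^{\bullet})$-topology, and an isometry from $(X,\|\cdot\|^{\bullet})$ onto $(j(X),\|\cdot\|_{{X^{\bullet}}'})$, so that $j\bigl(B_{(X,\|\cdot\|^{\bullet})}\bigr)=j(X)\cap B_{{X^{\bullet}}'}$; moreover, by \prettyref{thm:resolv_bullet} (c), $j\circ R(\lambda,A)=R(\lambda,{A^{\bullet}}')\circ j$ for admissible $\lambda$, with $R(\lambda,{A^{\bullet}}')=R(\lambda,A^{\bullet})'$.

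For (i) $\Rightarrow$ (iv): if $j$ is surjective, then $j\bigl(B_{(X,\|\cdot\|^{\bullet})}\bigr)=B_{{X^{\bullet}}'}$, which is $\sigma({X^{\bullet}}',X^{\bullet})$-compact by the Banach--Alaoglu theorem, and transporting this back through the homeomorphism $j^{-1}$ shows that $B_{(X,\|\cdot\|^{\bullet})}$ is $\sigma(X,X^{\bullet})$-compact. For (iv) $\Rightarrow$ (ii): the set $B_{(X,\|\cdot\|^{\bullet})}$ is $\|\cdot\|$-bounded (the norms $\|\cdot\|^{\bullet}$ and $\|\cdot\|$ being equivalent by \prettyref{thm:eqiv_norms}) and $\sigma(X,X^{\bullet})$-compact, and every admissible $\lambda$ satisfies $R(\lambda,A)^{\bullet}X^{\bullet}\subset D(A^{\bullet})\subset X^{\bullet}$; hence \prettyref{prop:resolvent_weakly_compact} (b) gives that $R(\lambda,A)B_{(X,\|\cdot\|^{\bullet})}$ is $\sigma(X,X^{\circ})$-compact. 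The step (ii) $\Rightarrow$ (iii) is immediate, since $\sigma(X,X^{\circ})$ is finer than the Hausdorff topology $\sigma(X,X^{\bullet})$.

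The core of the argument is (iii) $\Rightarrow$ (i). Fix an admissible $\lambda_{0}$ such that $R(\lambda_{0},A)B_{(X,\|\cdot\|^{\bullet})}$ is $\sigma(X,X^{\bullet})$-compact, and set $R:=R(\lambda_{0},{A^{\bullet}}')=R(\lambda_{0},A^{\bullet})'$. Combining the intertwining relation with the isometry property of $j$ gives $j\bigl(R(\lambda_{0},A)B_{(X,\|\cdot\|^{\bullet})}\bigr)=R\bigl(j(X)\cap B_{{X^{\bullet}}'}\bigr)$, and the set on the left is $\sigma({X^{\bullet}}',X^{\bullet})$-compact, being the image under the homeomorphism $j$ of a $\sigma(X,X^{\bullet})$-compact set. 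Since $R$ is $\sigma({X^{\bullet}}',X^{\bullet})$-continuous and injective, its restriction to the $\sigma({X^{\bullet}}',X^{\bullet})$-compact ball $B_{{X^{\bullet}}'}$ is a homeomorphism onto its image; consequently $j(X)\cap B_{{X^{\bullet}}'}$, as the preimage of a compact set under this restriction, is $\sigma({X^{\bullet}}',X^{\bullet})$-compact, hence $\sigma({X^{\bullet}}',X^{\bullet})$-closed. The Krein--\v{S}mulian theorem, applied to the linear subspace $j(X)$ of the dual Banach space ${X^{\bullet}}'$, then shows that $j(X)$ itself is $\sigma({X^{\bullet}}',X^{\bullet})$-closed. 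Finally, $\bullet$-reflexivity gives $X^{\bullet\bullet}=j(X_{\operatorname{cont}})\subset j(X)$, while $X^{\bullet\bullet}$ is sequentially $\tau_{\operatorname{c}}({X^{\bullet}}',X^{\bullet})$-dense --- hence $\sigma({X^{\bullet}}',X^{\bullet})$-dense --- in ${X^{\bullet}}'$ by \prettyref{rem:bi_sun_sun}; therefore $j(X)={X^{\bullet}}'$, i.e.\ (i) holds. For the concluding assertion: whenever (ii) holds, pick an admissible $\lambda$; then $R(\lambda,A)B_{(X,\|\cdot\|^{\bullet})}$ is $\sigma(X,X^{\circ})$-compact, hence $\sigma(X,X^{\circ})$-closed, hence $\gamma$-closed (as $\sigma(X,X^{\circ})$ is coarser than $\gamma$), and \prettyref{thm:favard_gamma_closed_resolv} then yields $Fav(T)=D(A)$.

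The main obstacle is the implication (iii) $\Rightarrow$ (i). One must apply the intertwining $j\circ R(\lambda_{0},A)=R(\lambda_{0},{A^{\bullet}}')\circ j$ only for an \emph{admissible} $\lambda_{0}$ --- that is precisely where \prettyref{thm:resolv_bullet} (c) applies --- and one must note that $R(\lambda_{0},{A^{\bullet}}')$ need not be a global $\sigma({X^{\bullet}}',X^{\bullet})$-homeomorphism: only its restriction to the compact ball $B_{{X^{\bullet}}'}$ is, and this restriction is what allows one to pull compactness back to the slice $j(X)\cap B_{{X^{\bullet}}'}$. Turning the resulting $\sigma({X^{\bullet}}',X^{\bullet})$-closedness of this bounded slice into $\sigma({X^{\bullet}}',X^{\bullet})$-closedness of the whole subspace $j(X)$ requires the Krein--\v{S}mulian theorem, and only afterwards does the $\sigma({X^{\bullet}}',X^{\bullet})$-density of $X^{\bullet\bullet}$ in ${X^{\bullet}}'$ upgrade closedness to surjectivity. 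The $\bullet$-reflexivity hypothesis enters exactly at this last point, to secure $X^{\bullet\bullet}\subset j(X)$; without it one would only know $j(X)\cap X^{\bullet\bullet}=j(X_{\operatorname{cont}})$.
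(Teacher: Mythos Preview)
Your proof is correct, but the route through (iii) $\Rightarrow$ (i) differs substantially from the paper's. The paper proves (ii) $\Rightarrow$ (i) by passing through the Favard machinery: $\sigma(X,X^{\circ})$-compactness of $R(\lambda,A)B_{(X,\|\cdot\|^{\bullet})}$ forces $\gamma$-closedness, whence $Fav(T)=D(A)$ by \prettyref{thm:favard_gamma_closed_resolv}; combined with $\bullet$-reflexivity and \prettyref{prop:favard} this gives $D({A^{\bullet}}')=D(A)$, and then ${X^{\bullet}}'=(\lambda-{A^{\bullet}}')D({A^{\bullet}}')=(\lambda-A)D(A)=X$. The equivalence (ii) $\Leftrightarrow$ (iii) is handled separately via the equicontinuity results (\prettyref{prop:resolv_equicont}, \prettyref{cor:weakly_bi_sun_relative_closed}).

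Your argument instead bypasses the Favard space entirely in the equivalence proof: you transport the compactness through $j$ and the resolvent intertwining, use that $R(\lambda_{0},{A^{\bullet}}')$ is a $\sigma({X^{\bullet}}',X^{\bullet})$-homeomorphism on the compact ball to deduce that $j(X)\cap B_{{X^{\bullet}}'}$ is weak$^{\ast}$-compact, and then invoke Krein--\v{S}mulian plus the weak$^{\ast}$-density of $X^{\bullet\bullet}$ to conclude $j(X)={X^{\bullet}}'$. This is a genuinely more topological argument; it avoids the chain of lemmas on Favard classes and the equicontinuity corollaries, trading them for one application of Krein--\v{S}mulian. The paper's approach, on the other hand, yields $Fav(T)=D(A)$ as an integral part of proving (ii) $\Rightarrow$ (i), whereas you obtain it only afterwards as a separate consequence of (ii) via \prettyref{thm:favard_gamma_closed_resolv}. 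Both approaches are clean; yours is arguably more self-contained for a reader who does not need the Favard characterisation independently.
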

\begin{proof}
(ii)$\Rightarrow$(i) Condition (ii) implies that $R(\lambda,A)B_{(X,\|\cdot\|^{\bullet})}$ is $\gamma$-closed
for some $\lambda\in\rho(A)$ such that $R(\lambda,A)^{\bullet}X^{\bullet}\subset D(A^{\bullet})$ 
because $(X,\gamma)'=X^{\circ}$ and the $\sigma(X,X^{\circ})$-topology is coarser than $\gamma$. 
By \prettyref{thm:favard_gamma_closed_resolv} we obtain $Fav(T)=D(A)$ and from the $\bullet$-reflexivity of $X$ 
we derive $D({A^{\bullet}}')\subset X^{\bullet\bullet}=X_{\operatorname{cont}}$. This implies 
\[
D({A^{\bullet}}')=D({A^{\bullet}}')\cap X_{\operatorname{cont}}=Fav(T)=D(A)
\]
by \prettyref{prop:favard} and thus
\[
{X^{\bullet}}'
=(\lambda-{A^{\bullet}}')D({A^{\bullet}}')
=(\lambda-{A^{\bullet}}')D(A)
=(\lambda-A)D(A)
=X,
\]
yielding the desired result.

(i)$\Rightarrow$(iv) $B_{{X^{\bullet}}'}$ is $\sigma({X^{\bullet}}',X^{\bullet})$-compact by the Banach--Alaoglu 
theorem. By assumption we may identify $X$ and ${X^{\bullet}}'$ as well as 
$B_{{X^{\bullet}}'}$ and $B_{(X,\|\cdot\|^{\bullet})}$ via $j$ because $j$ is an isometry as a map from 
$(X,\|\cdot\|^{\bullet})$ to $({X^{\bullet}}',\|\cdot\|_{{X^{\bullet}}'})$. 

(ii)$\Leftrightarrow$(iii) $R(\lambda,A)B_{(X,\|\cdot\|^{\bullet})}$ is $\gamma$-$(T(t))_{t\geq 0}$-equicontinuous 
by \prettyref{prop:resolv_equicont} and \prettyref{thm:eqiv_norms} and thus 
$\sigma(X,X^{\circ})$-$(T(t))_{t\geq 0}$-equicontinuous by \prettyref{rem:T(t)_equicont} (a).
Due to \prettyref{cor:weakly_bi_sun_relative_closed} the relative $\sigma(X,X^{\circ})$- and 
$\sigma(X,X^{\bullet})$-topology coincide on $R(\lambda,A)B_{(X,\|\cdot\|^{\bullet})}$, which implies 
the validity of the equivalence (ii)$\Leftrightarrow$(iii).

(iv)$\Rightarrow$(ii) This follows from \prettyref{prop:resolvent_weakly_compact} (b).
\end{proof} 

\begin{exa}
Let $q\colon \N\to\C$ such that $\sup_{n\in\N}\re q(n)<\infty$, and let $(T(t))_{t\geq 0}$ be the 
bi-continuous multiplication semigroup on $(\ell^{\infty},\|\cdot\|_{\infty},\mu(\ell^{\infty},\ell^{1}))$ from 
\prettyref{ex:resolv_bullet} given by 
\[
T(t)x\coloneqq(\e^{q(n)t}x_{n})_{n\in\N},\quad x\in\ell^{\infty},\, t\geq 0,
\]
with generator
$
A\colon D(A)\to \ell^{\infty},\;Ax =qx,
$
on the domain 
\[
D(A)=\{x\in\ell^{\infty}\;|\;(q(n)x_{n})_{n\in\N}\in\ell^{\infty}\}. 
\]
Furthermore, it holds
\[
\|T(t)\|_{\mathcal{L}(\ell^{\infty})}=\e^{t\sup_{n\in\N}\re q(n)},\quad t\geq 0,
\]
which implies $\omega_{0}(T)=\sup_{n\in\N}\re q(n)$ and 
$M\coloneqq\limsup_{t\to 0\rlim}\|T(t)\|_{\mathcal{L}(\ell^{\infty})}=1$. 
Therefore $\|\cdot\|_{\infty}=\|\cdot\|_{\infty}^{\bullet}$ by \prettyref{cor:embed_bi_sun_dual}.
The space $(\ell^{\infty},\mu(\ell^{\infty},\ell^{1}))$ is a semi-reflexive Mackey--Mazur space, in particular 
$\ell^{\infty}$ is $\bullet$-reflexive w.r.t.~$(T(t))_{t\geq 0}$, 
and $j\colon \ell^{\infty}\to {(\ell^{\infty})^{\bullet}}'$ is surjective 
by \prettyref{cor:relation_reflexivities} and \prettyref{prop:relation_reflexivities_1}. 
It follows from \prettyref{ex:resolv_bullet} and \prettyref{thm:bi_sun_reflexive_j_surjective} 
that $Fav(T)=D(A)$ and $R(\lambda,A)B_{(\ell^{\infty},\|\cdot\|_{\infty}^{\bullet})}$ is 
$\sigma(\ell^{\infty},\ell^{1})$-compact 
for all $\lambda\in\rho(A)=\C\setminus\overline{q(\N)}$.
\end{exa}

Of course, instead of the surjectivity of $j\colon \ell^{\infty}\to {(\ell^{\infty})^{\bullet}}'$ one can also
use in the example above that $B_{(\ell^{\infty},\|\cdot\|_{\infty})}$ is $\sigma(\ell^{\infty},\ell^{1})$-compact 
by the Banach--Alaoglu theorem and that $\|\cdot\|_{\infty}=\|\cdot\|_{\infty}^{\bullet}$ to conclude that 
$R(\lambda,A)B_{(\ell^{\infty},\|\cdot\|_{\infty}^{\bullet})}$ is $\sigma(\ell^{\infty},\ell^{1})$-compact 
for all $\lambda\in\rho(A)$ and $Fav(T)=D(A)$ by \prettyref{thm:bi_sun_reflexive_j_surjective}. 
Another way to prove $Fav(T)=D(A)$ 
by \prettyref{ex:favard_dual_semigroup} is to observe that $(T(t))_{t\geq 0}$ is the dual semigroup of 
the $\|\cdot\|_{\ell^{1}}$-strongly continuous multiplication semigroup $(S(t))_{t\geq 0}$ on $\ell^{1}$ given by 
$S(t)x\coloneqq(\e^{q(n)t}x_{n})_{n\in\N}$ for $x\in\ell^{1}$ and $t\geq 0$.

\bibliography{biblio_sun_dual_theory}
\bibliographystyle{plainnat}
\end{document}